\theoremstyle{definition}
\newtheorem{theorem}{Theorem}[section]
\newtheorem{proposition}[theorem]{Proposition}
\newtheorem{lemma}[theorem]{Lemma}
\newtheorem{corollary}[theorem]{Corollary}
\newtheorem{remark}[theorem]{Remark}
\newtheorem*{remark*}{Remark}
\DeclareMathOperator{\re}{Re}
\DeclareMathOperator{\im}{Im}
\DeclareMathOperator{\Mod}{Mod}
\DeclareMathOperator{\app}{app}
\title[Minimal mass blow-up solutions for double power NLS with an inverse power potential]{Minimal mass blow-up solutions for double power nonlinear Schr\"{o}dinger equations with an inverse power potential}
\author[N. Matsui]{Naoki Matsui}
\date{\today}
\address[N. Mastui]{Department of Mathematics\\ Tokyo University of Science\\ 1-3 Kagurazaka, Shinjuku-ku, Tokyo 162-8601, Japan}
\email[N. Matsui]{1120703@ed.tus.ac.jp}
\keywords{nonlinear Schr\"{o}dinger equation, critical exponent, critical mass, minimal-mass blow-up, blow-up rate, lower estimate.}
\subjclass[2010]{35Q55}
\begin{document}
\maketitle

\begin{abstract}
We consider the following nonlinear Schr\"{o}dinger equation with double power nonlinearities and an inverse power potential:
\[
i\frac{\partial u}{\partial t}+\Delta u+|u|^{\frac{4}{N}}u+C_1|u|^{p-1}u+\frac{C_2}{|x|^{2\sigma}}u=0
\]
in $\mathbb{R}^N$. From the classical argument, the solution with subcritical mass ($\left\|u_0\right\|_2<\left\|Q\right\|_2$) is global and bounded in $H^1(\mathbb{R}^N)$, where $Q$ is the ground state of the mass-critical problem. Previous results show the existence of a minimal-mass blow-up solution for the equation with $C_1>0$ and $C_2=0$ or $C_1=0$ and $C_2>0$ and investigate the behaviour of the solution near the blow-up time. Moreover, they have suggested that a subcritical power nonlinearity and an inverse power potential behave in a similar way with respect to blow-up. On the other hand, the previous results also show the nonexistence of a minimal-mass blow-up solution for the equation with $C_1<0$ and $C_2=0$ or $C_1=0$ and $C_2<0$. In this paper, we investigate the existence and behaviour of a minimal-mass blow-up solution for the equation with $C_1>0>C_2$ or $C_1<0<C_2$, that is the subcritical power nonlinearity and the inverse power potential cancel each other's effects. Furthermore, we give a lower estimate of the arbitrary finite-time blow-up solution with critical mass and show that the energies of critical-mass blow-up solutions are positive when $(C_1,C_2,p,\sigma)$ is under certain conditions.
\end{abstract}

\section{Introduction}
We consider the following nonlinear Schr\"{o}dinger equation with double power nonlinearlities and an inverse power potential:
\begin{align}
\label{NLS}
i\frac{\partial u}{\partial t}+\Delta u+|u|^{\frac{4}{N}}u+C_1|u|^{p-1}u+\frac{C_2}{|x|^{2\sigma}}u=0
\end{align}
in $\mathbb{R}^N$, where $N\in\mathbb{N}$, $p$ and $\sigma$ are positive, $C_1$ and $C_2$ is real, and double-sign do not correspond. It is well known that if
\begin{align}
\label{index1}
1<p<1+\frac{4}{N}\quad\text{and}\quad 0<\sigma<\min\left\{\frac{N}{2},1\right\},
\end{align}
then \eqref{NLS} is locally well-posed in $H^1(\mathbb{R}^N)$ from \cite[Proposition 3.2.2, Proposition 3.2.5, Theorem 3.3.9, and Proposition 4.2.3]{CSSE}. This means that for any initial value $u_0\in H^1(\mathbb{R}^N)$, there exists a unique maximal solution $u\in C((T_*,T^*),H^1(\mathbb{R}^N))\cap C^1((T_*,T^*),H^{-1}(\mathbb{R}^N))$ for \eqref{NLS} with $u(0)=u_0$. Moreover, the mass (i.e., $L^2$-norm) and energy $E$ of the solution $u$  are conserved by the flow, where 
\[
E(u):=\frac{1}{2}\left\|\nabla u\right\|_2^2-\frac{1}{2+\frac{4}{N}}\left\|u\right\|_{2+\frac{4}{N}}^{2+\frac{4}{N}}-\frac{C_1}{p+1}\|u\|_{p+1}^{p+1}-\frac{C_2}{2}\||x|^{-\sigma}u\|_2^2.
\]
Furthermore, the blow-up alternative holds:
\[
T^*<\infty\quad \text{implies}\quad \lim_{t\nearrow T^*}\left\|\nabla u(t)\right\|_2=\infty.
\]

We define $\Sigma^k$ by
\[
\Sigma^k:=\left\{u\in H^k\left(\mathbb{R}^N\right)\ \middle|\ |x|^ku\in L^2\left(\mathbb{R}^N\right)\right\},\quad \|u\|_{\Sigma^k}^2:=\|u\|_{H^k}^2+\||x|^ku\|_2^2.
\]
Particularly, $\Sigma^1$ is called the virial space. If $u_0\in \Sigma^1$, then the solution $u$ for \eqref{NLS} with $u(0)=u_0$ belongs to $C((T_*,T^*),\Sigma^1)$ from \cite[Lemma 6.5.2]{CSSE}.

Moreover, we consider the case
\begin{align}
\label{index2}
1<p<1+\frac{4}{N}\quad\text{and}\quad 0<\sigma<\min\left\{\frac{N}{4},1\right\}.
\end{align}
Under this condition, if $u_0\in H^2(\mathbb{R}^N)$, then the solution $u$ for \eqref{NLS} with $u(0)=u_0$ belongs to $C((T_*,T^*),H^2(\mathbb{R}^N))\cap C^1((T_*,T^*),L^2(\mathbb{R}^N))$ and $|x|\nabla u\in C((T_*,T^*),L^2(\mathbb{R}^N))$ from \cite[Theorem 5.3.1]{CSSE}. Furthermore, if $u_0\in \Sigma^2$, then the solution $u$ for \eqref{NLS} with $u(0)=u_0$ belongs to $C((T_*,T^*),\Sigma^2)\cap C^1((T_*,T^*),L^2(\mathbb{R}^N))$ and $|x|\nabla u\in C((T_*,T^*),L^2(\mathbb{R}^N))$ from the same proof as in \cite[Lemma 6.5.2]{CSSE}.

\subsection{Critical problem}
Firstly, we describe the results regarding the mass-critical problem:
\begin{align}
\label{CNLS}
i\frac{\partial u}{\partial t}+\Delta u+|u|^{\frac{4}{N}}u=0,\quad (t,x)\in\mathbb{R}\times\mathbb{R}^N.
\end{align}
In particular, \eqref{NLS} with $\sigma=0$ and $p=1$ is reduced to \eqref{CNLS}.

It is well known (\cite{BLGS,KGS,WGS}) that there exists a unique classical solution $Q$ for
\[
-\Delta Q+Q-\left|Q\right|^{\frac{4}{N}}Q=0,\quad Q\in H^1(\mathbb{R}^N),\quad Q>0,\quad Q\text{\ is\ radial},
\]
which is called the ground state. If $\|u\|_2=\|Q\|_2$ ($\|u\|_2<\|Q\|_2$, $\|u\|_2>\|Q\|_2$), we say that $u$ has the \textit{critical mass} (\textit{subcritical mass}, \textit{supercritical mass}, respectively).

We note that $E_{\text{crit}}(Q)=0$, where $E_{\text{crit}}$ is the energy with respect to \eqref{CNLS}. Moreover, the ground state $Q$ attains the best constant in the Gagliardo-Nirenberg inequality
\[
\left\|v\right\|_{2+\frac{4}{N}}^{2+\frac{4}{N}}\leq\left(1+\frac{2}{N}\right)\left(\frac{\left\|v\right\|_2}{\left\|Q\right\|_2}\right)^{\frac{4}{N}}\left\|\nabla v\right\|_2^2\quad\text{for }v\in H^1(\mathbb{R}^N).
\]
Therefore, for all $v\in H^1(\mathbb{R}^N)$,
\[
E_{\text{crit}}(v)\geq \frac{1}{2}\left\|\nabla v\right\|_2^2\left(1-\left(\frac{\left\|v\right\|_2}{\left\|Q\right\|_2}\right)^{\frac{4}{N}}\right)
\]
holds. This inequality and the mass and energy conservations imply that any subcritical-mass solution for \eqref{CNLS} is global and bounded in $H^1(\mathbb{R}^N)$.

Regarding the critical mass case, we apply the pseudo-conformal transformation
\[
u(t,x)\ \mapsto\ \frac{1}{\left|t\right|^\frac{N}{2}}u\left(-\frac{1}{t},\pm\frac{x}{t}\right)e^{i\frac{\left|x\right|^2}{4t}}
\]
to the solitary wave solution $u(t,x):=Q(x)e^{it}$. Then we obtain
\[
S(t,x):=\frac{1}{\left|t\right|^\frac{N}{2}}Q\left(\frac{x}{t}\right)e^{-\frac{i}{t}}e^{i\frac{\left|x\right|^2}{4t}},
\]
which is also a solution for \eqref{CNLS} and satisfies
\[
\left\|S(t)\right\|_2=\left\|Q\right\|_2,\quad \left\|\nabla S(t)\right\|_2\sim\frac{1}{\left|t\right|}\quad (t\nearrow 0).
\]
Namely, $S$ is a minimal-mass blow-up solution for \eqref{CNLS}. Moreover, $S$ is the only finite time blow-up solution for \eqref{CNLS} with critical mass, up to the symmetries of the flow (see \cite{MMMB}).

Regarding the supercritical mass case, there exists a solution $u$ for \eqref{CNLS} such that
\[
\left\|\nabla u(t)\right\|_2\sim\sqrt{\frac{\log\bigl|\log\left|T^*-t\right|\bigr|}{T^*-t}}\quad (t\nearrow T^*)
\]
(see \cite{MRUPB,MRUDB}).

\subsection{Previous results}
We describe previous results \cite{LMR} regarding \eqref{NLS} with $C_2=0$:
\begin{align}
\label{DPNLS}
i\frac{\partial u}{\partial t}+\Delta u+|u|^{\frac{4}{N}}u\pm |u|^{p-1}u=0,\quad 1<p<1+\frac{4}{N}.
\end{align}

\begin{theorem}[\cite{LMR}, see also \cite{MIP}]
\label{Thm:DPexist}
For any energy level $E_0\in\mathbb{R}$, there exist $t_0<0$ and a radially symmetric initial value $u(t_0)\in \Sigma^1$ with
\[
\|u_0\|_2=\|Q\|_2,\quad E(u_0)=E_0
\]
such that the corresponding solution $u$ for \eqref{DPNLS} with $\pm=+$ blows up at $T^*=0$. Moreover,
\[
\left\|u(t)-\frac{1}{\lambda(t)^\frac{N}{2}}P\left(t,\frac{x}{\lambda(t)}\right)e^{-i\frac{b(t)}{4}\frac{|x|^2}{\lambda(t)^2}+i\gamma(t)}\right\|_{\Sigma^1}\rightarrow 0\quad (t\nearrow 0)
\]
holds for some blow-up profile $P$, positive constants $C_1(p)$ and $C_2(p)$, positive-valued $C^1$ function $\lambda$, and real-valued $C^1$ functions $b$ and $\gamma$ such that
\begin{align*}
P(t)&\rightarrow Q\text{ in }H^1(\mathbb{R}^N),&\lambda(t)&=C_1(p)|t|^{\frac{4}{4+N(p-1)}}\left(1+o(1)\right),\\
b(t)&=C_2(p)|t|^{\frac{4-N(p-1)}{4+N(p-1)}}\left(1+o(1)\right),&\gamma(t)^{-1}&=O\left(|t|^{\frac{4-N(p-1)}{4+N(p-1)}}\right)
\end{align*}
as $t\nearrow 0$.
\end{theorem}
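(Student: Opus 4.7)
The plan is to adapt the pseudo-conformal blow-up construction of Bourgain--Wang and Raphaël: seek a solution whose leading-order shape is a $b$-deformation $P_b$ of the ground state $Q$, carrying the full pseudo-conformal phase $e^{-ib|x|^2/4\lambda^2}$ and three modulation parameters $(\lambda,b,\gamma)$. Substituting the ansatz
\[
\tilde u(t,x) := \frac{1}{\lambda(t)^{N/2}}\, P_{b(t)}\!\left(\frac{x}{\lambda(t)}\right)\exp\!\left(-i\tfrac{b(t)}{4}\tfrac{|x|^2}{\lambda(t)^2}+i\gamma(t)\right)
\]
into \eqref{DPNLS} with $\pm=+$ and passing to rescaled variables $y=x/\lambda$, $ds=\lambda^{-2}dt$, the equation for $P_b-Q$ takes the schematic form
\[
L_+(P_b-Q) \;\approx\; -\tfrac{b^2|y|^2}{4}Q + c\,\lambda^{2-N(p-1)/2}\,Q^p + \text{higher order},
\]
with $L_+=-\Delta+1-(1+\tfrac{4}{N})Q^{4/N}$. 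Solvability at each order forces the balance $\lambda^{2-N(p-1)/2}\sim b^2$; combined with the modulation identities $\lambda_s/\lambda=-b$ and $b_t\sim -\lambda^{-N(p-1)/2}$, a direct computation produces precisely $\lambda(t)\sim C_1(p)|t|^{4/(4+N(p-1))}$ and $b(t)\sim C_2(p)|t|^{(4-N(p-1))/(4+N(p-1))}$ as $t\nearrow 0$.

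Next, I would decompose the true solution as $u=\tilde u+\tilde\varepsilon$ and impose finitely many orthogonality conditions on $\tilde\varepsilon$ against the generalized null space of the linearized operators $L_\pm$ around $Q$. This fixes $(\lambda,b,\gamma)$ as $C^1$ functions of $t$ and yields an ODE system for them whose forcing is controlled by a weighted norm of $\tilde\varepsilon$. The heart of the argument is a bootstrap estimate for a modified energy/virial functional of $\tilde\varepsilon$ in $\Sigma^1$: the sign $+$ in front of $|u|^{p-1}u$ is precisely what renders this functional coercive once the Gagliardo--Nirenberg gap for critical-mass profiles is invoked, so the bootstrap closes.

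Given the uniform bootstrap bounds, I would construct the solution by backwards approximation. Fix $E_0\in\mathbb{R}$ and a sequence $t_n\nearrow 0$; prescribe at $t=t_n$ a radial $\Sigma^1$ datum obtained from $\tilde u(t_n)$ by adding a small perturbation, orthogonal to the modulation directions, and tuned via two scalar parameters so that $\|u_n(t_n)\|_2=\|Q\|_2$ and $E(u_n(t_n))=E_0$ simultaneously. Solve \eqref{DPNLS} backwards on $[t_0,t_n]$; the bootstrap provides uniform $\Sigma^1$-bounds, hence compactness of $\{u_n(t_0)\}$. Extract a limit and evolve it forward to obtain $u\in C([t_0,0),\Sigma^1)$ with the prescribed mass and energy, blowing up at $T^*=0$ and satisfying the asymptotic decomposition stated in the theorem.

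The main technical obstacle is closing the bootstrap in $\Sigma^1$. The subcritical contribution generates a forcing of size $O(\lambda^{2-N(p-1)/2})$ that must be integrated against the modulation dynamics without derivative loss, while the weighted norm $\||x|\tilde\varepsilon\|_2$ demands delicate handling of the rapidly oscillating virial phase $e^{-ib|x|^2/(4\lambda^2)}$. It is here that the coercivity of the linearized energy modulo explicit null directions, together with the favourable sign in front of $|u|^{p-1}u$, becomes indispensable in absorbing all error terms and making the limit at $T^*=0$ a genuine critical-mass blow-up solution.
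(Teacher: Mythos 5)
Your outline is the Le Coz--Martel--Rapha\"el strategy, which is exactly the route the paper takes: Theorem \ref{Thm:DPexist} is quoted from \cite{LMR}, and the machinery of Sections 2--4 here (profile expansion solving \eqref{Peq}, the decomposition Lemma \ref{decomposition} with the orthogonality conditions \eqref{orthocondi}, the modulation estimates of Lemma \ref{Modesti-1}, the Lyapunov functional $S$ of Lemma \ref{Sesti}, and the backwards-in-time compactness argument) is the same architecture you describe. Two small corrections to the setup: the profile must be expanded jointly in $b^2$ and $\lambda^{\alpha}$ (as in \eqref{Pdef}), not in $b$ alone, because the subcritical term enters with the prefactor $\lambda^{\alpha}$; and in the paper's scheme the data at $t_n$ is simply $P_{\lambda_1,b_1,0}$ with $b_1$ tuned so that $E=E_0$, the critical mass being recovered only in the limit $n\to\infty$.

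The one genuine misstep is where you locate the use of the sign $+$. The coercivity of the mixed energy--virial functional does \emph{not} come from that sign: it comes from the spectral properties of $L_{\pm}$ together with the orthogonality conditions, as in \eqref{Lcoer}, and the subcritical contribution to $S$ is an $O(\lambda^{\alpha})$ perturbation that is absorbed regardless of its sign. Where the sign is truly indispensable is in the profile construction: the solvability/orthogonality condition $(P_{0,0}^+,Q)_2=0$ for $L_+P_{0,0}^+ = \beta_{0,0}\frac{|y|^2}{4}Q + C_0 g(Q)$ forces
\[
\beta_{0,0}\,\||\cdot|Q\|_2^2 \;=\; C_0\,\frac{2N(p-1)}{p+1}\,\|Q\|_{p+1}^{p+1}\;>\;0 ,
\]
and it is the positivity of $\beta_{0,0}$ in the modulation law $b_s+b^2=\beta_{0,0}\lambda^{\alpha}+\cdots$, $\lambda_s/\lambda=-b$, that produces trajectories with $\lambda\to 0$ at the rate $|t|^{4/(4+N(p-1))}$ for \emph{every} energy level $E_0\in\mathbb{R}$ (via $b^2/\lambda^2=\tfrac{2\beta_{0,0}}{2-\alpha}\lambda^{\alpha-2}+8E_0/\|yQ\|_2^2$). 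As written, your argument never uses the sign except for a coercivity claim that is in fact sign-independent, so it would apply verbatim to $\pm=-$ and contradict Theorem \ref{Thm:DPnonexist}. You need to make the positivity of $\beta_{0,0}$ (equivalently, the sign of the constant in your ``balance $\lambda^{\alpha}\sim b^2$'') the explicit hinge of the construction, as in Remark \ref{betapositive}.
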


\begin{theorem}[\cite{LMR}]
\label{Thm:DPnonexist}
For any critical-mass initial value $u(t_0)\in H^1(\mathbb{R}^N)$, the corresponding solution for \eqref{DPNLS} with $\pm=-$ is global and bounded in $H^1(\mathbb{R}^N)$.
\end{theorem}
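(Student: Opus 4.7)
The plan is to rule out finite-time blow-up by contradiction, using energy conservation to extract an a priori $L^{p+1}$-bound and then contradicting it at the "blow-up scale" via a concentration-compactness argument adapted to the sharp Gagliardo-Nirenberg problem. By the blow-up alternative, it is enough to prove that $\|\nabla u(t)\|_2$ stays bounded on $(T_\ast,T^\ast)$. For \eqref{DPNLS} with $\pm=-$ the energy reads
\[
E(u) = \frac{1}{2}\|\nabla u\|_2^2 - \frac{1}{2+4/N}\|u\|_{2+4/N}^{2+4/N} + \frac{1}{p+1}\|u\|_{p+1}^{p+1},
\]
and the sharp Gagliardo-Nirenberg inequality recalled in the introduction yields, at critical mass, the bound $\frac{1}{2+4/N}\|u(t)\|_{2+4/N}^{2+4/N} \leq \frac{1}{2}\|\nabla u(t)\|_2^2$. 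Combined with conservation of energy, this produces the key uniform control
\[
0 \leq \frac{1}{p+1}\|u(t)\|_{p+1}^{p+1} \leq E(u_0),
\]
which will serve as the target for the contradiction.

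Next, assume for contradiction that there exists $(t_n)$ with $\|\nabla u(t_n)\|_2 \to \infty$. I would renormalize by setting
\[
\lambda_n := \|\nabla Q\|_2 / \|\nabla u(t_n)\|_2 \longrightarrow 0, \qquad v_n(x) := \lambda_n^{N/2} u(t_n,\lambda_n x),
\]
so that $\|v_n\|_2 = \|Q\|_2$ and $\|\nabla v_n\|_2 = \|\nabla Q\|_2$. A direct scaling computation produces
\[
\lambda_n^2\, E(u(t_n)) = \frac{1}{2}\|\nabla v_n\|_2^2 - \frac{1}{2+4/N}\|v_n\|_{2+4/N}^{2+4/N} + \frac{\lambda_n^{\,2-N(p-1)/2}}{p+1}\|v_n\|_{p+1}^{p+1}.
\]
The hypothesis $1 < p < 1 + 4/N$ is exactly what makes the exponent $2-N(p-1)/2$ strictly positive, so the subcritical term is subdominant. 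Since $\lambda_n \to 0$ while $E(u(t_n)) = E(u_0)$ and $\|v_n\|_{p+1}$ stay bounded, I obtain
\[
\frac{1}{2}\|\nabla v_n\|_2^2 - \frac{1}{2+4/N}\|v_n\|_{2+4/N}^{2+4/N} \longrightarrow 0,
\]
i.e.\ $(v_n)$ is an asymptotic extremizer for the sharp Gagliardo-Nirenberg inequality at mass $\|Q\|_2$.

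The final step is a concentration-compactness / profile-decomposition argument in the spirit of Hmidi-Keraani, combined with the variational characterization of $Q$ as the unique extremizer of the critical Gagliardo-Nirenberg inequality up to the symmetries: the constraints $\|v_n\|_2 = \|Q\|_2$ and $\|\nabla v_n\|_2 = \|\nabla Q\|_2$ kill the dilation degree of freedom, so, up to a subsequence, $v_n \to Q(\cdot - x_n) e^{i\theta_n}$ strongly in $H^1(\mathbb{R}^N)$. The $H^1 \hookrightarrow L^{p+1}$ continuity then gives $\|v_n\|_{p+1}^{p+1} \to \|Q\|_{p+1}^{p+1} > 0$, whence
\[
\|u(t_n)\|_{p+1}^{p+1} = \lambda_n^{-N(p-1)/2}\|v_n\|_{p+1}^{p+1} \longrightarrow +\infty,
\]
contradicting the a priori $L^{p+1}$-bound from the first step and finishing the argument. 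I expect the main technical obstacle to sit in this last compactness step --- upgrading asymptotic saturation of the sharp Gagliardo-Nirenberg inequality to strong $H^1$-convergence to $Q$ modulo symmetries --- but this is by now a standard ingredient in the mass-critical NLS literature and can be cited directly.
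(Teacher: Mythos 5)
The paper does not reprove Theorem \ref{Thm:DPnonexist} (it is quoted from \cite{LMR}), but your argument is correct and is essentially the proof given there: at critical mass the sharp Gagliardo--Nirenberg inequality and energy conservation give the a priori bound $\frac{1}{p+1}\|u(t)\|_{p+1}^{p+1}\le E(u_0)$, while the variational characterization of $Q$ (concentration-compactness at the blow-up scale) forces $\|u(t_n)\|_{p+1}^{p+1}=\lambda_n^{-\frac{N(p-1)}{2}}\|v_n\|_{p+1}^{p+1}\to\infty$ along any sequence with $\|\nabla u(t_n)\|_2\to\infty$, a contradiction. Your formulation, which excludes \emph{any} unbounded sequence of times rather than only finite-time blow-up, correctly yields both globality and $H^1$-boundedness.
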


Similarly to the critical problem, by using the Gagliardo-Nirenberg inequality, we can show that the subcritical-mass solution for \eqref{DPNLS} is global and bounded in $H^1(\mathbb{R}^N)$. Therefore, if there is a minimal-mass blow-up solution, it has a mass greater than or equal to critical mass. In Theorem \ref{Thm:DPexist}, a critical-mass blow-up solution with a blow-up rate of $|t|^{\frac{4}{4+N(p-1)}}$ has been constructed. This blow-up rate is different from the blow-up rate $t^{-1}$ of the critical problem. On the other hand, Theorem \ref{Thm:DPnonexist} shows that there is no blow-up solution with critical mass. For any supercritical-mass, there exists a blow-up solution for \eqref{DPNLS} with $\pm=-$ with that mass \cite[Lemma 1.2]{LMR}. Therefore, Theorem \ref{Thm:DPexist} states that there is no minimal-mass blow-up solution. Consequently, we see that the perturbation term $|u|^{p-1}u$ affects the existence of the minimal-mass blow solution and its behaviour.

Next, we describe previous result \cite{MIP} regarding \eqref{NLS} with $C_1=0$:
\begin{align}
\label{IPNLS}
i\frac{\partial u}{\partial t}+\Delta u+|u|^{\frac{4}{N}}u\pm \frac{1}{|x|^{2\sigma}}u=0.
\end{align}

\begin{theorem}[\cite{MIP}]
\label{Thm:IPexist}
Assume $0<\sigma<\min\left\{\frac{N}{4},1\right\}$. Then for any energy level $E_0\in\mathbb{R}$, there exist $t_0<0$ and a radially symmetric initial value $u_0\in \Sigma^1$ with
\[
\|u_0\|_2=\|Q\|_2,\quad E(u_0)=E_0
\]
such that the corresponding solution $u$ for \eqref{IPNLS} with $\pm=+$ and $u(t_0)=u_0$ blows up at $T^*=0$. Moreover,
\[
\left\|u(t)-\frac{1}{\lambda(t)^\frac{N}{2}}P\left(t,\frac{x}{\lambda(t)}\right)e^{-i\frac{b(t)}{4}\frac{|x|^2}{\lambda(t)^2}+i\gamma(t)}\right\|_{\Sigma^1}\rightarrow 0\quad (t\nearrow 0)
\]
holds for some blow-up profile $P$ and $C^1$ functions $\lambda:(t_0,0)\rightarrow(0,\infty)$ and $b,\gamma:(t_0,0)\rightarrow\mathbb{R}$ such that
\begin{align*}
P(t)&\rightarrow Q\quad\text{in}\ H^1(\mathbb{R}^N),&\lambda(t)&=C_1(\sigma)|t|^{\frac{1}{1+\sigma}}\left(1+o(1)\right),\\
b(t)&=C_2(\sigma)|t|^{\frac{1-\sigma}{1+\sigma}}\left(1+o(1)\right),& \gamma(t)^{-1}&=O\left(|t|^{\frac{1-\sigma}{1+\sigma}}\right)
\end{align*}
as $t\nearrow 0$.
\end{theorem}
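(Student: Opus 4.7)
The plan is to adapt the construction of \cite{LMR} recalled in Theorem~\ref{Thm:DPexist}, replacing the subcritical power perturbation by the inverse-power potential. Following the Merle / Rapha\"{e}l--Szeftel scheme, the strategy has three steps: (i) build an approximate self-similar blow-up profile tailored to the perturbation; (ii) solve \eqref{IPNLS} backward in time from a sequence $t_n\nearrow 0$ with data equal to the profile at $t_n$; and (iii) extract a limit $u_\infty$ via compactness on an interval $(t_0,0)$ with $t_0$ independent of $n$. The energy level $E_0$ is then prescribed by deforming a one-parameter family of admissible initial data and invoking continuity of $E$, as in \cite[Section~6]{LMR}.

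For the profile I would use the modulation ansatz
\[
u(t,x) = \frac{1}{\lambda^{N/2}}P\!\left(t,\frac{x}{\lambda}\right)e^{-i\frac{b}{4}\frac{|x|^2}{\lambda^2}+i\gamma},\qquad \frac{ds}{dt}=\frac{1}{\lambda^2},
\]
with modulation parameters $(\lambda,b,\gamma)$. In the rescaled variable $y=x/\lambda$ the potential contributes the source $\lambda^{2(1-\sigma)}|y|^{-2\sigma}P$, which plays the same role as the $\lambda^{2-N(p-1)/2}$ source produced by the power term in \cite{LMR}. Expanding $P = Q + b^2 P_2 + \lambda^{2(1-\sigma)}R_\sigma + \cdots$, one determines $P_2$ and $R_\sigma$ by solving explicit linear equations involving the linearization of $-\Delta+1-(1+4/N)|Q|^{4/N}$, and extracts the leading modulation system
\[
\frac{\lambda_s}{\lambda} = -b,\qquad b_s + b^2 + c(\sigma)\,\lambda^{2(1-\sigma)} = 0,
\]
with $c(\sigma)>0$ given by an inner product against $Q$. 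This system forces $b\sim\lambda^{1-\sigma}$ and integrates to the scaling law $\lambda(t)\sim|t|^{1/(1+\sigma)}$, $b(t)\sim|t|^{(1-\sigma)/(1+\sigma)}$ asserted in the statement.

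Decomposing $u_n = \lambda_n^{-N/2}(P+\varepsilon_n)\,e^{-ib_n|y|^2/4+i\gamma_n}$ and imposing orthogonality conditions on $\varepsilon_n$ that fix $(\lambda_n,b_n,\gamma_n)$, the heart of the proof is a backward bootstrap controlling $\|\varepsilon_n\|_{\Sigma^1}$ together with the discrepancy between $(\lambda_n,b_n)$ and their predicted trajectories. The engine is a Lyapunov functional combining the energy, the mass, and a local virial quantity; coercivity on the orthogonal complement of the kernel of the linearized operator is supplied by the sharp Gagliardo--Nirenberg inequality thanks to $\|\varepsilon_n\|_2\le\|Q\|_2$. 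Uniform $\Sigma^1$-bounds then permit weak extraction of $u_\infty(t_0)$, promoted to strong convergence on $(t_0,0)$ by $H^1$ well-posedness; the simultaneous convergence of $(\lambda_n,b_n,\gamma_n)$ to limiting trajectories yields both the blow-up at $T^*=0$ and the $\Sigma^1$-asymptotic announced.

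The principal obstacle, beyond transferring the algebraic computations of \cite{LMR}, is the singularity of $|x|^{-2\sigma}$ at the origin. Defining $R_\sigma$ and estimating terms such as $\int|y|^{-2\sigma}|\varepsilon_n|^2\,dy$ or $\int|y|^{-2\sigma}Q\,\varepsilon_n\,dy$ uniformly in $\lambda$ requires Hardy's inequality $\||x|^{-\sigma}f\|_2\lesssim\|\nabla f\|_2$, which holds precisely because $\sigma<\min\{N/2,1\}$; the sharper hypothesis $\sigma<N/4$ is what makes the $\Sigma^2$-level manipulations (needed for the virial identity and for time-differentiating $\|\nabla u\|_2$) meaningful. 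Ensuring that the prefactor $\lambda^{2(1-\sigma)}$ remains the dominant perturbation and that the singular weight does not couple destructively with $\varepsilon_n$ inside the Lyapunov estimate is the most delicate technical point I expect to face.
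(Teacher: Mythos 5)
Your overall architecture is the right one: Theorem \ref{Thm:IPexist} is quoted here from \cite{MIP} without proof, but both the cited proof and the analogous construction carried out in Sections 2--4 of this paper (for Theorem \ref{Thm:exist-3}) follow exactly the Le Coz--Martel--Rapha\"{e}l / Rapha\"{e}l--Szeftel scheme you describe: profile expansion in powers of $b^2$ and $\lambda^{2-2\sigma}$, modulation with the orthogonality conditions \eqref{orthocondi}, a backward bootstrap from a sequence $t_n\nearrow 0$ driven by a Lyapunov functional, and compactness. The rescaled source $\lambda^{2(1-\sigma)}|y|^{-2\sigma}P$ and the resulting exponents are also correct.

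There is, however, one genuine error, and it sits at the crux of the matter. You write the leading modulation law as $b_s+b^2+c(\sigma)\lambda^{2(1-\sigma)}=0$ with $c(\sigma)>0$ and claim this integrates to $b\sim\lambda^{1-\sigma}$ and $\lambda(t)\sim|t|^{1/(1+\sigma)}$. It does not: coupling with $\lambda_s/\lambda=-b$ gives the conserved quantity $\frac{b^2}{\lambda^2}+\frac{c(\sigma)}{\sigma}\lambda^{-2\sigma}=\mathrm{const}$, so with $c(\sigma)>0$ the relation forces $b^2<0$ as $\lambda\to 0$; i.e.\ that sign \emph{obstructs} blow-up rather than producing it. The correct law (in the paper's notation) is $b_s+b^2-\theta=0$ with $\theta=\beta_{0,0}\lambda^{2-2\sigma}+\cdots$ and $\beta_{0,0}>0$, the positivity coming from the explicit computation $\langle |y|^{-2\sigma}Q,\Lambda Q\rangle=\sigma\||\cdot|^{-\sigma}Q\|_2^2>0$ (cf.\ Remark \ref{betapositive} and the $\beta_{0,0}$ computation in Section 2). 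This sign is not a cosmetic convention: it is precisely what separates the attractive case $\pm=+$ (Theorem \ref{Thm:IPexist}) from the repulsive case (Theorem \ref{Thm:IPnonexist}), so it must be derived, not asserted. A secondary, more minor point: the coercivity in the Lyapunov step is supplied by the spectral estimate \eqref{Lcoer} for $L_+,L_-$ together with the orthogonality conditions, not by the sharp Gagliardo--Nirenberg inequality and $\|\varepsilon_n\|_2\le\|Q\|_2$; the latter controls subcritical-mass globality, not the linearized energy of $\varepsilon$.
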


\begin{theorem}[\cite{MIP}]
\label{Thm:IPnonexist}
Assume $N\geq 2$ and $0<\sigma<\min\left\{\frac{N}{2},1\right\}$. For any critical-mass initial value $u(t_0)\in H^1_{\text{rad}}(\mathbb{R}^N)$, the corresponding solution for \eqref{IPNLS} with $\pm=-$ is global and bounded in $H^1(\mathbb{R}^N)$.
\end{theorem}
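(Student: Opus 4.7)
The plan is to argue by contradiction via an $L^2$-critical rescaling combined with the radial compact Sobolev embedding. Assume some critical-mass radial solution $u$ is not bounded in $H^1$; by mass conservation and the blow-up alternative, there exists a sequence $t_n$ in the maximal existence interval with $\|\nabla u(t_n)\|_2\to\infty$. Set
\[
\lambda_n := \frac{\|\nabla Q\|_2}{\|\nabla u(t_n)\|_2} \to 0, \qquad v_n(y) := \lambda_n^{N/2}\, u(t_n,\lambda_n y),
\]
so that each $v_n$ is radial with $\|v_n\|_2 = \|Q\|_2$ and $\|\nabla v_n\|_2 = \|\nabla Q\|_2$.

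Since $\pm=-$ corresponds to $C_2=-1$, the energy carries the favourable sign $+\tfrac12\||x|^{-\sigma}u\|_2^2$. A direct change of variables rewrites energy conservation as
\[
E(u(t_0)) \;=\; \lambda_n^{-2}\,E_{\text{crit}}(v_n) \;+\; \tfrac{1}{2}\,\lambda_n^{-2\sigma}\,\||y|^{-\sigma}v_n\|_2^{\,2}.
\]
Both summands are non-negative---the first by the sharp Gagliardo-Nirenberg inequality at critical mass, the second trivially---so each is bounded by $E(u(t_0))$. Extracting these bounds separately yields simultaneously
\[
E_{\text{crit}}(v_n) \;=\; O(\lambda_n^{2})\to 0 \qquad\text{and}\qquad \||y|^{-\sigma}v_n\|_2^{\,2} \;=\; O(\lambda_n^{2\sigma})\to 0.
\]

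The bounded family $\{v_n\}\subset H^1_{\text{rad}}(\mathbb{R}^N)$ admits, by Strauss' compact embedding $H^1_{\text{rad}}\hookrightarrow L^{2+\frac{4}{N}}$ (valid for $N\geq 2$), a subsequence with $v_n\to v_\infty$ in $L^{2+\frac{4}{N}}$ and pointwise a.e. Fatou applied to the second vanishing above forces $v_\infty\equiv 0$, hence $\|v_n\|_{2+\frac{4}{N}}\to 0$ strongly. Since $\|\nabla v_n\|_2\equiv\|\nabla Q\|_2$, this gives
\[
E_{\text{crit}}(v_n)\;\to\; \tfrac{1}{2}\|\nabla Q\|_2^{\,2}\;>\;0,
\]
contradicting the first vanishing. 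The contradiction rules out $\|\nabla u\|_2$ being unbounded on the maximal interval, and the blow-up alternative converts this into global existence together with $H^1$-boundedness.

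The main obstacle is ruling out concentration of the rescaled profiles $v_n$ despite their $L^2$ and $\dot H^1$ norms being pinned to the optimizer values $\|Q\|_2,\|\nabla Q\|_2$. Radial symmetry is precisely what precludes drift along a moving spatial centre and supplies the $L^{2+\frac{4}{N}}$ compactness through Strauss' lemma; this is where the hypothesis $N\geq 2$ enters. The restrictions $\sigma<N/2$ and $\sigma<1$ serve only to ensure $|y|^{-\sigma}Q\in L^2$ and to remain within the well-posedness range \eqref{index1}. Structurally, the mechanism is the subcritical scaling $-2\sigma>-2$ of the weighted potential energy relative to the kinetic term: the positive potential contribution cannot absorb a kinetic blow-up, which forces both vanishing statements above and triggers the contradiction.
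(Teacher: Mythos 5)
Your proof is correct, and every step holds up: the rescaled energy identity $E(u(t_0))=\lambda_n^{-2}E_{\text{crit}}(v_n)+\tfrac{1}{2}\lambda_n^{-2\sigma}\||y|^{-\sigma}v_n\|_2^2$ is valid with both summands nonnegative (the first by the sharp Gagliardo--Nirenberg inequality at critical mass, which also forces $E(u(t_0))\geq 0$), so both must vanish at the stated rates, and Strauss compactness, Fatou, and the pinned value $\|\nabla v_n\|_2=\|\nabla Q\|_2$ then produce the contradiction $0=\lim_n E_{\text{crit}}(v_n)=\tfrac{1}{2}\|\nabla Q\|_2^2$. It is, however, a genuinely different and lighter route than the one behind the cited result and behind the analogous nonexistence machinery in this paper. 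The proof in \cite{MIP} first upgrades $E_{\text{crit}}(v_n)\to 0$ to strong $H^1$-convergence of the rescaled solution to $e^{i\gamma}Q$ via the variational characterization of the ground state (the step recorded as \eqref{predec} in Section 5) and reads off the contradiction from the divergence of the positive term $\lambda_n^{-2\sigma}\||y|^{-\sigma}v_n\|_2^2$, whose second factor tends to $\||y|^{-\sigma}Q\|_2^2>0$; this paper goes further still, combining the decomposition Lemma \ref{decomposition} with the coercivity \eqref{Lcoer} to obtain the quantitative bound of Lemma \ref{bepsiesti}. You invert the logic: rather than identifying the limit profile as $Q$, you use the forced vanishing of the weighted norm to show the limit is $0$, which is incompatible with the fixed kinetic energy. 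What you gain is elementarity --- no variational characterization, no modulation theory, only the compact radial embedding and Fatou. What the heavier route buys is quantitative control ($\tilde{b}^2+\|\hat{\varepsilon}\|_{H^1}^2\lesssim\tilde{\lambda}^\alpha$ and the like), which is exactly what feeds the lower bounds on the blow-up rate in Theorems \ref{Thm:propmmbs-1} and \ref{Thm:propmmbs-2}; your argument, being purely qualitative, could not be recycled for that purpose. Two cosmetic remarks: the a.e. convergence in the Fatou step should be taken along a further subsequence of the $L^{2+\frac{4}{N}}$-convergent one, and the finiteness of $\||x|^{-\sigma}u\|_2$ for general $u\in H^1$ actually uses both $\sigma<\tfrac{N}{2}$ and $\sigma<1$ when $N\geq 3$, not only the first condition.
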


For Theorems \ref{Thm:IPexist} and \ref{Thm:IPnonexist}, as in Theorems \ref{Thm:DPexist} and \ref{Thm:DPnonexist}, we see that the perturbation term $|x|^{-2\sigma}u$ affects the existence of the minimal-mass blow-up solution and its behaviour.

Let $\alpha_p$ and $\alpha_\sigma$ be defined by
\[
\alpha_p:=2-\frac{N}{2}(p-1),\quad \alpha_\sigma:=2-2\sigma.
\]
Then the blow-up rates of Theorem \ref{Thm:DPexist} and \ref{Thm:IPexist} are represented
\[
|t|^{-\frac{2}{4-\alpha_p}},\quad |t|^{-\frac{2}{4-\alpha_\sigma}},
\]
respectively. Therefore, if $\alpha_p=\alpha_\sigma$, then we expect a power nonlinearity $|u|^{p-1}u$ and an inverse power potential $|x|^{-2\sigma}$ to behave in a similar way. Assuming $\alpha_p=\alpha_\sigma$, the subcritical nonlinearity and the inverse power potential may influence each other to reach different conclusions from Theorems \ref{Thm:DPexist} and \ref{Thm:IPexist}. Therefore, in the following, we consider
\begin{align}
\label{NLS2}
i\frac{\partial u}{\partial t}+\Delta u+|u|^{\frac{4}{N}}u\pm C_0|u|^{p-1}u\mp\frac{1}{|x|^{2\sigma}}u=0\quad(\text{double-sign corresponds}),
\end{align}
where $C_0>0$ and $\alpha_p=\alpha_\sigma$.

Let $\alpha$ and $\omega$ be defined by
\[
\alpha:=\alpha_p=\alpha_\sigma,\quad \omega:=\frac{p+1}{2}\frac{\||\cdot|^{-\sigma}Q\|_2^2}{\|Q\|_{p+1}^{p+1}}.
\]

\subsection{Main results}
Firstly, we show that for \eqref{NLS2}, a minimal-mass solution, which blows up at a finite time is constructed when the attractive term is not inferior to the repulsive term (Theorems \ref{Thm:exist-1}, \ref{Thm:exist-2}, and \ref{Thm:exist-3}).

\begin{theorem}[Existence of a minimal-mass blow-up solution 1]
\label{Thm:exist-1}
Assume \eqref{index2}, $C_0>\omega$, and $\alpha_p=\alpha_\sigma$. Then for any energy level $E_0\in\mathbb{R}$, there exist $t_0<0$ and a radially symmetric initial value $u_0\in H^1(\mathbb{R}^N)$ with
\[
\|u_0\|_2=\|Q\|_2,\quad E(u_0)=E_0
\]
such that the corresponding solution $u$ for \eqref{NLS2} with $(\pm,\mp)=(+,-)$ and $u(t_0)=u_0$ blows up at $T^*=0$. Moreover,
\[
\left\|u(t)-\frac{1}{\lambda(t)^\frac{N}{2}}P\left(t,\frac{x}{\lambda(t)}\right)e^{-i\frac{b(t)}{4}\frac{|x|^2}{\lambda(t)^2}+i\gamma(t)}\right\|_{\Sigma^1}\rightarrow 0\quad (t\nearrow 0)
\]
holds for some blow-up profile $P$ and $C^1$ functions $\lambda:(t_0,0)\rightarrow(0,\infty)$ and $b,\gamma:(t_0,0)\rightarrow\mathbb{R}$ such that
\begin{align*}
P(t)&\rightarrow Q\quad\text{in}\ H^1(\mathbb{R}^N),&\lambda(t)&=C_1(\alpha)|t|^{\frac{2}{4-\alpha}}\left(1+o(1)\right),\\
b(t)&=C_2(\alpha)|t|^{\frac{\alpha}{4-\alpha}}\left(1+o(1)\right),& \gamma(t)^{-1}&=O\left(|t|^{\frac{\alpha}{4-\alpha}}\right)
\end{align*}
as $t\nearrow 0$.
\end{theorem}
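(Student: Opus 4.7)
The proof will follow the now-standard Raphael--Szeftel construction used for the two building blocks, Theorems \ref{Thm:DPexist} and \ref{Thm:IPexist}, adapted to the simultaneous presence of the subcritical power nonlinearity and the singular inverse-power potential. The plan is to construct an approximate blow-up profile $P_b$ solving, up to a controlled error, the profile equation obtained by inserting the self-similar ansatz
\[
u(t,x)=\frac{1}{\lambda(t)^{N/2}}P_{b(t)}\!\left(t,\tfrac{x}{\lambda(t)}\right)e^{-ib(t)|x|^2/(4\lambda(t)^2)+i\gamma(t)}
\]
into \eqref{NLS2}, and then to produce the genuine solution by a backward-in-time compactness argument from a sequence $t_n\nearrow 0$.

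First I would construct $P_b$ as a small deformation of $Q$, $P_b=Q+b^2R_1+\cdots$, where the correctors $R_j$ solve linear equations driven by the operators linearising the mass-critical flow around $Q$. Orthogonality conditions on the error give modulation ODEs for $(\lambda,b,\gamma)$. The assumption $\alpha_p=\alpha_\sigma=\alpha$ is used precisely here: it forces both perturbative terms $C_0|u|^{p-1}u$ and $-|x|^{-2\sigma}u$ to enter the profile equation at the same order $\lambda^\alpha$, and the resulting law of motion for $b$ is of the schematic form
\[
b_s+b^2=c_0\,\lambda^\alpha+O(\lambda^{\alpha+\eta}),\qquad -\lambda_s/\lambda=b,
\]
with $c_0$ proportional to $C_0\|Q\|_{p+1}^{p+1}-\tfrac{p+1}{2}\||\cdot|^{-\sigma}Q\|_2^2$. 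The hypothesis $C_0>\omega$ is exactly the condition $c_0>0$, which is what makes the system integrable with decreasing $\lambda$ and positive $b$; direct integration yields the announced asymptotics for $\lambda$ and $b$, and the remaining free constant in $\gamma$ is used to tune the energy to the prescribed $E_0$.

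Next, at each $t_n\nearrow 0$, I would take as initial data a modulation of $P_{b(t_n)}$ adjusted to satisfy $\|u_n(t_n)\|_2=\|Q\|_2$ and $E(u_n(t_n))=E_0$, and solve \eqref{NLS2} backward on a maximal interval $[T_n,t_n]$. Using the geometric decomposition $u_n(t,x)=\lambda^{-N/2}(P_b+\varepsilon)(t,x/\lambda)e^{i\Phi}$, with orthogonality conditions fixing $(\lambda,b,\gamma)$, the problem reduces to a bootstrap on the remainder $\varepsilon$, typically of the form $\|\varepsilon(t)\|_{H^1}\lesssim\lambda(t)^{1+\eta}$ and $|b(t)-b_{\mathrm{app}}(t)|=o(b_{\mathrm{app}}(t))$. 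The bootstrap is closed by monotonicity of a modulated energy/virial functional, whose time derivative is bounded by the localised virial term plus error terms of size $\lambda^{\alpha+\eta}$; coercivity of this functional on the space transverse to the generalised kernel, together with the conservation laws, yields uniform $H^1$ control on $[T_0,0)$ with $T_0$ independent of $n$, after which a standard weak compactness argument produces the limit solution $u\in C([T_0,0),\Sigma^1)$.

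The main obstacle, compared to \cite{LMR} and \cite{MIP} taken in isolation, is that the subcritical nonlinearity and the inverse-power potential contribute to the $b$-equation at the \emph{same} order $\lambda^\alpha$ and must be treated jointly; isolating either contribution is not enough, and the sign of their sum---governed by $C_0-\omega$---must be propagated through every error estimate. A secondary technical difficulty is the singularity of the potential at $x=0$, which forces the analysis to be carried out in $\Sigma^1$ rather than in $\Sigma^2$ and requires Hardy's inequality together with the localised weighted estimates of \cite{MIP} to absorb the contributions of $|x|^{-2\sigma}\varepsilon$ in the modulated energy functional.
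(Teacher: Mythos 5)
Your proposal follows essentially the same route as the paper, which proves this theorem by deferring to the constructions of \cite{LMR,MIP} and only verifying (Remark \ref{betapositive}) that the coefficient $\beta_{0,0}$ in the modified law $b_s+b^2=\beta_{0,0}\lambda^{\alpha}+\dots$ is proportional to $C_0-\omega$ once $\alpha_p=\alpha_\sigma$, hence positive exactly under the hypothesis $C_0>\omega$ --- which is precisely your condition $c_0>0$ with the same constant. One small correction: the phase $\gamma$ cannot tune the energy (the energy is gauge-invariant); in the paper the prescribed level $E_0$ is attained by choosing the initial pair $(\lambda_1,b_1)$ via $E(P_{\lambda_1,b_1,0})=E_0$, the rest of your scheme being unaffected.
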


\begin{theorem}[Existence of a minimal-mass blow-up solution 2]
\label{Thm:exist-2}
Assume \eqref{index2}, $0<C_0<\omega$, and $\alpha_p=\alpha_\sigma$. Then for any energy level $E_0\in\mathbb{R}$, there exist $t_0<0$ and a radially symmetric initial value $u_0\in H^1(\mathbb{R}^N)$ with
\[
\|u_0\|_2=\|Q\|_2,\quad E(u_0)=E_0
\]
such that the corresponding solution $u$ for \eqref{NLS2} with $(\pm,\mp)=(-,+)$ and $u(t_0)=u_0$ blows up at $T^*=0$. Moreover,
\[
\left\|u(t)-\frac{1}{\lambda(t)^\frac{N}{2}}P\left(t,\frac{x}{\lambda(t)}\right)e^{-i\frac{b(t)}{4}\frac{|x|^2}{\lambda(t)^2}+i\gamma(t)}\right\|_{\Sigma^1}\rightarrow 0\quad (t\nearrow 0)
\]
holds for some blow-up profile $P$ and $C^1$ functions $\lambda:(t_0,0)\rightarrow(0,\infty)$ and $b,\gamma:(t_0,0)\rightarrow\mathbb{R}$ such that
\begin{align*}
P(t)&\rightarrow Q\quad\text{in}\ H^1(\mathbb{R}^N),&\lambda(t)&=C_1(\alpha)|t|^{\frac{2}{4-\alpha}}\left(1+o(1)\right),\\
b(t)&=C_2(\alpha)|t|^{\frac{\alpha}{4-\alpha}}\left(1+o(1)\right),& \gamma(t)^{-1}&=O\left(|t|^{\frac{\alpha}{4-\alpha}}\right)
\end{align*}
as $t\nearrow 0$.
\end{theorem}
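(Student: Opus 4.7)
The plan is to adapt the scheme of Theorem \ref{Thm:exist-1} to the present sign configuration. The key point is that, when $\alpha_p=\alpha_\sigma$, both perturbations $-C_0|u|^{p-1}u$ and $+|x|^{-2\sigma}u$ scale identically under the mass-critical rescaling; projecting them jointly against the ground state produces a single effective coefficient whose sign is governed by $\omega-C_0$. The hypothesis $0<C_0<\omega$ is exactly the condition under which this effective coefficient is positive, i.e.\ the combined perturbation is net attractive, which is the analogue here of $C_0>\omega$ in the $(+,-)$ setting of Theorem \ref{Thm:exist-1}.

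First, I would construct an approximate blow-up profile of the form $P_b=Q+b^2R+\cdots$, where the correction $R$ is obtained by solving a Poisson-type equation $L_+R=F$; the source $F$ is the sum of the standard conformal piece and of the two perturbation contributions evaluated on $Q$. Solvability on $\ker(L_+)^\perp$ yields, via duality with $Q$, a modulation law of the schematic form
\[
b_s+b^2+c\,(\omega-C_0)\,\lambda^{4-\alpha}+\cdots=0,
\]
with $c>0$ explicit, which identifies the self-similar rates $\lambda(t)\sim|t|^{2/(4-\alpha)}$ and $b(t)\sim|t|^{\alpha/(4-\alpha)}$ claimed in the statement.

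Next, I would run the by-now standard backward construction: fix a sequence $t_n\nearrow 0$, solve \eqref{NLS2} backwards from the modulated profile $P_{b_n}$ at time $t_n$, and decompose the resulting solution as $u_n=(P_{b(t)}+\varepsilon)_{\lambda(t),\gamma(t)}$ with four orthogonality conditions on $\varepsilon$ fixing the modulation parameters uniquely. Uniform control of $\|\varepsilon\|_{H^1}$ and of the deviations of $(\lambda,b,\gamma)$ from their expected asymptotics on an interval $[t_0,t_n]$ with $t_0<0$ independent of $n$ would come from a Lyapunov functional combining the linearized energy (coercive on the orthogonal subspace by the standard spectral theory of $Q$) with a localized virial multiplier, exactly as in the proofs of Theorems \ref{Thm:DPexist} and \ref{Thm:IPexist}; the two perturbation terms only contribute lower-order pieces that can be absorbed thanks to $b\ll 1$. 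Compactness in $\Sigma^1$ at the fixed time $t_0$, together with the local Cauchy theory for \eqref{NLS2} in $\Sigma^1$, then yields a limiting minimal-mass solution on $[t_0,0)$ blowing up at $t=0$ with the asserted asymptotics. Finally, the prescription $E(u_0)=E_0$ is achieved by adjusting, via an intermediate-value argument, a one-parameter radial correction of the initial datum, following \cite{LMR,MIP}.

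The main obstacle I expect lies in Step 1: one must verify that when the two perturbation terms are assembled into the source $F$ of the Poisson problem for $R$, their contributions combine algebraically so that the scalar compatibility condition against $Q$ reduces precisely to $c(\omega-C_0)\neq 0$, and that with the opposite sign convention $(-,+)$ all higher-order profile corrections and error terms keep the same structural form as in Theorem \ref{Thm:exist-1}. This is essentially an explicit computation, but it is the point at which the threshold $\omega$ is identified and the hypothesis $0<C_0<\omega$ is seen to suffice. Once this algebraic identification is in place, the remainder of the bootstrap/compactness machinery transfers from Theorem \ref{Thm:exist-1} with only notational changes.
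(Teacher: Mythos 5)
Your proposal follows essentially the same route as the paper, which itself gives no new argument for this theorem beyond observing (Remark \ref{betapositive}) that the solvability constant $\beta_{0,0}$ in the profile construction satisfies $\beta_{0,0}>0$ precisely when $(\pm,\mp)=(-,+)$ and $0<C_0<\omega$, and then invoking the machinery of \cite{LMR,MIP}; your ``effective coefficient governed by $\omega-C_0$'' is exactly this $\beta_{0,0}$, obtained from $(P_{0,0}^+,Q)_2=0$ and $L_+\Lambda Q=-2Q$, so you have correctly isolated the one genuinely new point. One slip worth fixing: since the two perturbations enter the rescaled equation as $\lambda^{\alpha}g(P)$ and $\lambda^{\alpha}|y|^{-2\sigma}P$, the modulation law is $b_s+b^2-\beta_{0,0}\lambda^{\alpha}+\cdots=0$ (with $\beta_{0,0}\propto\omega-C_0$), not $b_s+b^2+c(\omega-C_0)\lambda^{4-\alpha}+\cdots=0$; the exponent $4-\alpha$ you wrote would force $\lambda\sim|t|^{2/\alpha}$ rather than the rate $\lambda\sim|t|^{2/(4-\alpha)}$ you (correctly) assert two lines later. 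With that exponent corrected, and noting that in the radial setting only three orthogonality conditions are needed (scaling, phase, and the $b$-direction, as in \eqref{orthocondi}), the rest of your bootstrap/compactness outline matches the paper's intended proof.
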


In particular, when the attractive term is balanced by the repulsive term, there is a minimal-mass blow-up solution with a blow-up rate $t^{-1}$ like in the critical problem:

\begin{theorem}[Existence of a minimal-mass blow-up solution 3]
\label{Thm:exist-3}
Assume \eqref{index2}, $C_0=\omega$, and $\alpha:=\alpha_p=\alpha_\sigma>1$. Then for any energy level $E_0>0$, there exist $t_0<0$ and a radially symmetric initial value $u_0\in H^1(\mathbb{R}^N)$ with
\[
\|u_0\|_2=\|Q\|_2,\quad E(u_0)=E_0
\]
such that the corresponding solution $u$ for \eqref{NLS2} with $u(t_0)=u_0$ blows up at $T^*=0$. Moreover,
\[
\left\|u(t)-\frac{1}{\lambda(t)^\frac{N}{2}}Q\left(t,\frac{x}{\lambda(t)}\right)e^{-i\frac{b(t)}{4}\frac{|x|^2}{\lambda(t)^2}+i\gamma(t)}\right\|_{\Sigma^1}\rightarrow 0\quad (t\nearrow 0)
\]
holds for $C^1$ functions $\lambda:(t_0,0)\rightarrow(0,\infty)$ and $b,\gamma:(t_0,0)\rightarrow\mathbb{R}$ such that
\begin{align*}
\lambda(t)=\sqrt{\frac{8E_0}{\|yQ\|_2^2}}|t|\left(1+o(1)\right),\quad b(t)=\frac{8E_0}{\|yQ\|_2^2}|t|\left(1+o(1)\right),\quad \gamma(t)^{-1}=O\left(|t|\right)
\end{align*}
as $t\nearrow 0$.
\end{theorem}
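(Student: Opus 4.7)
The plan is to adapt the Merle--Raphaël-type backward construction used for Theorems \ref{Thm:DPexist} and \ref{Thm:IPexist} to the balanced case $C_0=\omega$. The decisive observation is that, with $\alpha_p=\alpha_\sigma=\alpha$, both perturbation terms in the energy scale as $\lambda^{\alpha-2}$ when evaluated on the ansatz $u=\lambda^{-N/2}Q(\cdot/\lambda)e^{-ib|x|^2/(4\lambda^2)+i\gamma}$, and their coefficients combine into
\[
\pm\lambda^{\alpha-2}\Bigl[\tfrac{1}{2}\||x|^{-\sigma}Q\|_2^2-\tfrac{C_0}{p+1}\|Q\|_{p+1}^{p+1}\Bigr],
\]
which vanishes exactly by the definition of $\omega$. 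Hence no modified profile is needed, and a direct computation gives $E(u)=E_{\mathrm{crit}}(u)=\frac{b^2}{8\lambda^2}\|yQ\|_2^2$ on this ansatz. Combined with the standard modulation law $\lambda_s/\lambda=-b+\cdots$ (the subscript $s$ denoting the rescaled time $ds=dt/\lambda^2$), imposing $E(u)=E_0>0$ pins down $b/\lambda=\mu$ and the pseudo-conformal rate $\lambda(t)\sim\mu|t|$, $b(t)\sim\mu^2|t|$ with $\mu:=\sqrt{8E_0/\|yQ\|_2^2}$, which both yields the explicit constants in the statement and explains why $E_0>0$ is necessary.

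First I would fix a sequence $t_n\nearrow 0$ and take as final data at $t=t_n$
\[
u_n(t_n,x)=\frac{1}{\lambda_n^{N/2}}Q\Bigl(\frac{x}{\lambda_n}\Bigr)\exp\Bigl(-i\frac{b_n}{4}\frac{|x|^2}{\lambda_n^2}\Bigr),\qquad \lambda_n=\mu|t_n|,\ b_n=\mu^2|t_n|,
\]
which by the exact cancellation above satisfies $\|u_n(t_n)\|_2=\|Q\|_2$ and $E(u_n(t_n))=\mu^2\|yQ\|_2^2/8=E_0$. Solving \eqref{NLS2} backwards from $t_n$, I would introduce on a uniform interval $[t_0,t_n]$ the modulated decomposition
\[
u_n(t,x)=\frac{1}{\lambda(t)^{N/2}}(Q+\varepsilon)\Bigl(t,\frac{x}{\lambda(t)}\Bigr)e^{-i\frac{b(t)}{4}\frac{|x|^2}{\lambda(t)^2}+i\gamma(t)},
\]
imposing the usual orthogonality conditions on $\varepsilon$ relative to the generalized kernel of the linearized operator at $Q$. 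After rescaling time by $ds=dt/\lambda^2$ and space by $y=x/\lambda$, both perturbations enter the equation for $\varepsilon$ with coefficient $\lambda^{\alpha}$; the cancellation just recorded removes their contribution to the modulation equations at the $Q$-level, and what survives takes the form $\lambda_s/\lambda+b=O(\|\varepsilon\|_{\mathrm{loc}}^2+\lambda^{\alpha})$, $b_s=O(\|\varepsilon\|_{\mathrm{loc}}^2+\lambda^{\alpha})$, $\gamma_s-1=O(\|\varepsilon\|_{\mathrm{loc}}+\lambda^{\alpha})$.

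The core of the argument is a bootstrap driven by a localized virial/Lyapunov functional in the spirit of Raphaël--Szeftel, controlling simultaneously $\|\varepsilon(t)\|_{\Sigma^1}$ and the deviations of $(\lambda,b)$ from their prescribed leading behaviour. The main obstacle is to absorb the new $\lambda^{\alpha}$ source terms produced by the two perturbations: integrating against $ds$ with $\lambda\sim 1/s$ yields $\int\lambda^{\alpha}\,ds\sim\int s^{-\alpha}\,ds$, which converges as $s\to\infty$ precisely when $\alpha>1$. This is exactly the assumption made in Theorem \ref{Thm:exist-3}; below this threshold the perturbations would dominate the self-similar critical dynamics and force a modified profile together with the slower rate $|t|^{2/(4-\alpha)}$ of Theorems \ref{Thm:exist-1} and \ref{Thm:exist-2}.

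Finally, once uniform $\Sigma^1$-bounds on $\{u_n(t_0)\}_n$ are established, a standard weak-then-strong compactness argument (using conservation of mass and energy together with the virial estimate) extracts a $\Sigma^1$-limit $u_0$. The solution $u$ of \eqref{NLS2} with $u(t_0)=u_0$ inherits, by passage to the limit in the modulation estimates, both the stated decomposition and the asymptotic laws for $\lambda$, $b$, $\gamma$; the built-in lower bound $\|\nabla u(t)\|_2\gtrsim 1/|t|$ then forces $T^*=0$.
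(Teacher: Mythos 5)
Your overall strategy is the same as the paper's: integrate \eqref{NLS2} backwards from a sequence of well-prepared data at times $t_n\nearrow 0$, run a modulation decomposition with the orthogonality conditions \eqref{orthocondi}, close a bootstrap via a Lyapunov/virial functional, and pass to the limit by weak compactness in $\Sigma^1$. Your two structural observations are also correct and are exactly the ones the paper exploits: the energy of the pseudo-conformal ansatz is $\frac{b^2}{8\lambda^2}\|yQ\|_2^2$ because $\frac{\omega}{p+1}\|Q\|_{p+1}^{p+1}=\frac12\||\cdot|^{-\sigma}Q\|_2^2$, and the pairing of $\omega\lambda^\alpha g(Q)-\lambda^\alpha|y|^{-2\sigma}Q$ against $\Lambda Q$ vanishes when additionally $\alpha_p=\alpha_\sigma$ (this is precisely the computation showing $\beta_{0,0}=0$ in the paper's profile construction, and it is what replaces the $|t|^{2/(4-\alpha)}$ regime by the pseudo-conformal one).

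The one substantive gap is the assertion that ``no modified profile is needed.'' The cancellation at $C_0=\omega$ kills certain \emph{scalar products} (the energy contribution and the projection onto $\Lambda Q$ driving the $b$-equation), but the \emph{function} $\omega g(Q)-|y|^{-2\sigma}Q$ does not vanish; it is a genuine $O(\lambda^\alpha)$ source in the $\varepsilon$-equation that is not orthogonal to the coercive directions in \eqref{Lcoer}. The paper therefore still constructs the corrected profile \eqref{Pdef} with nonzero $P_{j,k}^{\pm}$ (only the coefficient $\beta_{0,0}$ of the $\frac{|y|^2}{4}P$ correction vanishes), which forces the remainder $\Psi$ to order $(b^2+\lambda^\alpha)^{K+2}$ and lets the bootstrap give $\|\varepsilon\|_{H^1}^2\lesssim s^{-(2K+\alpha)}$; the theorem is stated with $Q$ only because $\|P-Q\|_{H^1}\lesssim\lambda^\alpha\to 0$. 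In your scheme $\varepsilon$ must itself carry this $O(\lambda^\alpha)$ deformation, so at best $\|\varepsilon\|_{H^1}\approx\lambda^\alpha$, the modulation equations acquire errors $O(\lambda^\alpha\|\varepsilon\|_{H^1})=O(s^{-2\alpha})$, and after double integration the deviations $\lambda/\lambda_{\mathrm{app}}-1$, $b/b_{\mathrm{app}}-1$ are only $O(s^{2-2\alpha})$, while $\|u-Q_{\lambda,b,\gamma}\|_{\Sigma^1}\approx\lambda^{-1}\|\varepsilon\|_{H^1}\approx\lambda^{\alpha-1}$. All of these tend to $0$ only because $\alpha>1$, with no margin, and the Lyapunov functional's monotonicity must now absorb a linear-in-$\varepsilon$ forcing term $\lambda^\alpha\langle h,\varepsilon\rangle$ rather than a negligible one. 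This may well be made to work for $\alpha>1$, but it is exactly where the analytical work lies, and your proposal asserts it rather than proves it; either supply that analysis at the weaker level $\|\varepsilon\|\approx\lambda^\alpha$, or reinstate the corrected profile as in the paper.
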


On the other hand, minimal-mass blow-up solutions do not exist when the attractive term is inferior to the repulsive term (Theorems \ref{Thm:nonexist-1} and \ref{Thm:nonexist-2}).

\begin{theorem}[Nonexistence of a minimal-mass blow-up solution 1]
\label{Thm:nonexist-1}
Assume \eqref{index1}, $C_0>\omega$, and $\alpha_p=\alpha_\sigma$. Then for any critical-mass initial value $u(t_0)\in H^1(\mathbb{R}^N)$, the corresponding solution for \eqref{NLS2} with $(\pm,\mp)=(-,+)$ is global and bounded in $H^1(\mathbb{R}^N)$.
\end{theorem}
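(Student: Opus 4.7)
My plan is to prove Theorem~\ref{Thm:nonexist-1} by contradiction, combining conservation of energy with an Hmidi--Keraani-type profile decomposition. For \eqref{NLS2} with $(\pm,\mp)=(-,+)$ the conserved energy decomposes as $E(u)=\delta(u)+F(u)$ with
\[
\delta(u):=\frac{1}{2}\|\nabla u\|_2^2-\frac{1}{2+\frac{4}{N}}\|u\|_{2+\frac{4}{N}}^{2+\frac{4}{N}},\qquad F(u):=\frac{C_0}{p+1}\|u\|_{p+1}^{p+1}-\frac{1}{2}\||x|^{-\sigma}u\|_2^2.
\]
The sharp mass-critical Gagliardo--Nirenberg inequality at critical mass $\|u\|_2=\|Q\|_2$ gives $\delta(u)\ge 0$, so $F(u(t))\le E_0$ for all $t$ in the lifespan.

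Suppose for contradiction that $u$ blows up at a finite $T^{*}$, so $\|\nabla u(t)\|_2\to\infty$. Under \eqref{index1} we have $2\sigma=N(p-1)/2<2$, and standard Gagliardo--Nirenberg and Caffarelli--Kohn--Nirenberg interpolations yield $\|u(t)\|_{p+1}^{p+1}+\||x|^{-\sigma}u(t)\|_2^2=O(\|\nabla u(t)\|_2^{2\sigma})=o(\|\nabla u(t)\|_2^2)$. Combined with $\delta(u(t))=E_0-F(u(t))$, this forces the asymptotic saturation
\[
\frac{\|u(t_n)\|_{2+\frac{4}{N}}^{2+\frac{4}{N}}}{\|\nabla u(t_n)\|_2^2}\longrightarrow\frac{N+2}{N}
\]
along any $t_n\nearrow T^{*}$. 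A profile decomposition of Hmidi--Keraani type, together with the identification of $Q$ as the unique (up to symmetries) mass-critical Gagliardo--Nirenberg optimizer, then furnishes $\lambda_n\to 0$, $x_n\in\mathbb{R}^N$ and $\gamma_n\in\mathbb{R}$ such that
\[
v_n(z):=\lambda_n^{N/2}u(t_n,\lambda_n z+x_n)e^{-i\gamma_n}\longrightarrow Q\quad\text{strongly in }H^1(\mathbb{R}^N).
\]

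Because $\alpha_p=\alpha_\sigma$ forces $2\sigma=N(p-1)/2$, both subcritical densities scale at the same rate under the $L^2$-preserving rescaling:
\begin{align*}
\|u(t_n)\|_{p+1}^{p+1}&=\lambda_n^{-2\sigma}\bigl(\|Q\|_{p+1}^{p+1}+o(1)\bigr),\\
\||x|^{-\sigma}u(t_n)\|_2^2&=\lambda_n^{-2\sigma}\int_{\mathbb{R}^N}|z+y_n|^{-2\sigma}|v_n(z)|^2\,dz,\qquad y_n:=x_n/\lambda_n.
\end{align*}
Extracting a subsequence with $y_n\to y_\infty\in\mathbb{R}^N$ or $|y_n|\to\infty$, the strong $H^1$ convergence $v_n\to Q$ and the Riesz rearrangement inequality (since $Q$ is positive, radial, and decreasing) bound the last integral above by $\||x|^{-\sigma}Q\|_2^2+o(1)$. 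Hence
\begin{align*}
F(u(t_n))&\ge\lambda_n^{-2\sigma}\left(\frac{C_0}{p+1}\|Q\|_{p+1}^{p+1}-\frac{1}{2}\||x|^{-\sigma}Q\|_2^2\right)+o(\lambda_n^{-2\sigma})\\
&=\lambda_n^{-2\sigma}\cdot\frac{\|Q\|_{p+1}^{p+1}}{p+1}(C_0-\omega)(1+o(1))\longrightarrow+\infty
\end{align*}
by $C_0>\omega$ and $\lambda_n\to 0$, contradicting $F(u(t))\le E_0$. The main technical point is the profile decomposition and the identification of the concentrating profile as $Q$: the inverse-power potential breaks translation invariance, so the center $x_n$ must be tracked carefully, but the rearrangement inequality ensures that the worst case (concentration at the origin) still leaves a strictly positive dominant coefficient $\frac{\|Q\|_{p+1}^{p+1}}{p+1}(C_0-\omega)$ thanks to the hypothesis $C_0>\omega$.
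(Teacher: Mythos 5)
Your argument is correct and is essentially the proof the paper intends: for Theorem \ref{Thm:nonexist-1} the paper gives no argument beyond deferring to \cite{LMR,MIP}, and those nonexistence proofs are exactly your combination of the sharp Gagliardo--Nirenberg inequality at critical mass (so $\delta(u)\geq 0$ and hence $F(u(t))\leq E_0$) with the concentration-compactness/variational characterization of $Q$ along any sequence with $\|\nabla u(t_n)\|_2\to\infty$, forcing the subcritical terms to blow up like $\lambda_n^{-2\sigma}$. The one point where you genuinely have to add something to a verbatim transcription of \cite{LMR,MIP} is the potential term, which carries the unfavourable sign in $F$ and is not translation invariant; your Hardy--Littlewood rearrangement bound $\int|z+y_n|^{-2\sigma}|v_n|^2\,dz\leq\||\cdot|^{-\sigma}Q\|_2^2+o(1)$, uniform in the concentration centre, is exactly the right device, and it explains why this theorem (unlike Theorem \ref{Thm:nonexist-2}) needs neither radial symmetry nor $N\geq 2$: the hypothesis $C_0>\omega$ makes the limiting coefficient $\frac{C_0-\omega}{p+1}\|Q\|_{p+1}^{p+1}$ positive even in the worst case of concentration at the origin. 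Two small points to tidy up: the passage to the limit in $\int|z|^{-2\sigma}(|v_n|^2)^*\,dz$ should be justified via the $L^q$-nonexpansivity of rearrangement together with the splitting of $|z|^{-2\sigma}$ into a piece in $L^r$ near the origin with $r<N/(2\sigma)$ and a bounded piece (the dichotomy you state for $y_n$ is never actually used); and since the theorem asserts boundedness in $H^1$ and not merely global existence, you should phrase the contradiction as applying to any sequence $t_n\to T^*\leq\infty$ along which $\|\nabla u(t_n)\|_2\to\infty$, which your argument already covers without change.
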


\begin{theorem}[Nonexistence of a minimal-mass blow-up solution 2]
\label{Thm:nonexist-2}
Assume $N\geq 2$, \eqref{index2}, $0<C_0<\omega$, and $\alpha_p=\alpha_\sigma$. Then for any critical-mass initial value $u(t_0)\in H^1_{\text{rad}}(\mathbb{R}^N)$, the corresponding solution for \eqref{NLS2} with $(\pm,\mp)=(+,-)$ is global and bounded in $H^1(\mathbb{R}^N)$.
\end{theorem}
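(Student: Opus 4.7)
My plan is to mirror the contradiction strategy from Theorems \ref{Thm:DPnonexist} and \ref{Thm:IPnonexist}, exploiting the coincidence $\alpha_p=\alpha_\sigma=:\alpha$ so that, under the rescaling $v(x):=\lambda^{N/2}u(\lambda x)$, the two perturbations collapse into a single $\lambda^\alpha$-scaled bracket:
\[
2\lambda^2E(u)=\|\nabla v\|_2^2-\frac{2}{2+\frac{4}{N}}\|v\|_{2+\frac{4}{N}}^{2+\frac{4}{N}}+\lambda^\alpha\left(\||\cdot|^{-\sigma}v\|_2^2-\frac{2C_0}{p+1}\|v\|_{p+1}^{p+1}\right).
\]
Evaluated at $v=Q$, the second bracket equals $\frac{2(\omega-C_0)}{p+1}\|Q\|_{p+1}^{p+1}$, which is \emph{strictly positive} under the hypothesis $C_0<\omega$; this positivity drives the entire argument.

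Suppose for contradiction that there exist $t_n\to T^*\in(t_0,\infty]$ with $\|\nabla u(t_n)\|_2\to\infty$. Setting $\lambda_n:=\|\nabla Q\|_2/\|\nabla u(t_n)\|_2\to 0$ and $v_n(x):=\lambda_n^{N/2}u(t_n,\lambda_n x)$, we have $\|v_n\|_2=\|Q\|_2$ and $\|\nabla v_n\|_2=\|\nabla Q\|_2$. Since both subcritical perturbations grow strictly slower than $\|\nabla u\|_2^2$ by Hardy-type and Gagliardo--Nirenberg inequalities (thanks to $0<\alpha<2$ under \eqref{index2}), energy conservation yields the lower bound $\|v_n\|_{2+\frac{4}{N}}\gtrsim 1$. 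The Hmidi--Keraani concentration-compactness lemma then applies, and restricted to radial $v_n$ the translation parameter can be suppressed, so after extracting a subsequence and a phase,
\[
v_n\to Q\quad\text{strongly in }H^1(\mathbb{R}^N).
\]
Strong $H^1$-convergence combined with Sobolev embedding into $L^{p+1}$ (since $p+1<2+\frac{4}{N}$) and the bound $\||\cdot|^{-\sigma}w\|_2\lesssim\|w\|_2^{1-\sigma}\|\nabla w\|_2^\sigma$ obtained from Hardy's inequality (valid for $N\geq 2$ and $0<\sigma<1$) then yields
\[
\||\cdot|^{-\sigma}v_n\|_2^2-\frac{2C_0}{p+1}\|v_n\|_{p+1}^{p+1}\longrightarrow\frac{2(\omega-C_0)}{p+1}\|Q\|_{p+1}^{p+1}>0.
\]

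To conclude, I divide the scaling identity at $t_n$ by $\lambda_n^\alpha$. The left-hand side $2\lambda_n^{2-\alpha}E(u(t_n))$ vanishes since $\alpha<2$ and $E(u)$ is conserved; the first bracket $\|\nabla v_n\|_2^2-\frac{2}{2+\frac{4}{N}}\|v_n\|_{2+\frac{4}{N}}^{2+\frac{4}{N}}$ is non-negative by the sharp Gagliardo--Nirenberg inequality at critical mass, so $\lambda_n^{-\alpha}$ times it is also non-negative; yet the second bracket tends to a strictly positive limit. Rearranging forces a non-negative sequence to converge to a strictly negative number, the desired contradiction. Hence $\|\nabla u(t)\|_2$ is bounded on $[t_0,T^*)$, and the blow-up alternative upgrades this to $T^*=\infty$ together with global $H^1$-boundedness. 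The most delicate step will be the profile extraction yielding strong $H^1$-convergence $v_n\to Q$: while the radial hypothesis neutralises the translation parameter in Hmidi--Keraani, one must check that the two subcritical perturbations do not obstruct the Gagliardo--Nirenberg saturation argument that underlies the compactness conclusion.
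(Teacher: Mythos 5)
Your proposal is correct and follows essentially the route the paper intends: the paper omits this proof and refers to the arguments of \cite{LMR,MIP}, which are exactly your scheme of rescaling along a putative blow-up sequence, using the variational characterisation of $Q$ (with radiality and $N\geq2$ suppressing translations) to get $v_n\to Q$ in $H^1_{\text{rad}}$ up to a phase, and then reading off the sign of the $\lambda^\alpha$-bracket in the rescaled energy identity. You also correctly isolated the one point that is new relative to those references, namely that the competing bracket $\||\cdot|^{-\sigma}v\|_2^2-\frac{2C_0}{p+1}\|v\|_{p+1}^{p+1}$ is positive only in the limit $v\to Q$, which is precisely where the threshold condition $C_0<\omega$ enters.
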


Furthermore, optimal lower estimates are given for blow-up rates of critical-mass solution that blows up at a finite time in general (Theorems \ref{Thm:propmmbs-1} and \ref{Thm:propmmbs-2}). In particular, optimal lower estimations of blow-up rates are almost unknown except for classical results and cases where the uniqueness of the blow-up solution is known. 

\begin{theorem}[Behaviour of a minimal-mass blow-up solution 1]
\label{Thm:propmmbs-1}
Assume $N\geq 2$ and that $u$ is a radial $H^1$-solution for \eqref{NLS2} with $C_0\in(0,\omega)\cup(\omega,\infty)$, \eqref{DPNLS} with $\pm=+$, or \eqref{IPNLS} with $\pm=+$ which has critical mass and blows up at a finite time $T$. Then
\[
\|\nabla u(t)\|_2\gtrsim\frac{1}{|T-t|^\frac{2}{4-\alpha}}\quad(t\rightarrow T)
\]
holds.
\end{theorem}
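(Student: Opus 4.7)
The argument follows the standard scheme for minimal-mass lower bounds used in \cite{LMR} and \cite{MIP} for the related equations \eqref{DPNLS} and \eqref{IPNLS}: first extract concentration of $u(t)$ on the ground state profile $Q$, then derive a quantitative rate from energy conservation combined with the sharpened Gagliardo--Nirenberg inequality, and finally convert this into a differential inequality on the concentration scale via a modulation/virial analysis. The goal is to prove $\lambda(t) \lesssim (T-t)^{2/(4-\alpha)}$, where
\[
\lambda(t) := \frac{\|\nabla Q\|_2}{\|\nabla u(t)\|_2}, \qquad v(t,y) := \lambda(t)^{\frac{N}{2}} u\bigl(t, \lambda(t) y\bigr);
\]
this is equivalent to the stated lower bound on $\|\nabla u(t)\|_2$.

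Step 1: concentration and the static rate. By construction $\|v(t)\|_2 = \|Q\|_2$ and $\|\nabla v(t)\|_2 = \|\nabla Q\|_2$. Scaling the conserved energy $E(u(t)) = E_0$ by $\lambda^2$ and using $\alpha_p = \alpha_\sigma = \alpha$ (with only one correction in the \eqref{DPNLS} and \eqref{IPNLS} cases) yields
\[
E_{\mathrm{crit}}(v(t)) = \lambda(t)^2 E_0 + c(t)\,\lambda(t)^{\alpha},
\]
where $E_{\mathrm{crit}}(w) := \tfrac{1}{2}\|\nabla w\|_2^2 - \tfrac{1}{2+4/N}\|w\|_{2+4/N}^{2+4/N}$ and $c(t) \to c_\infty \ne 0$ as $t \to T$. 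The nonvanishing of $c_\infty$ is precisely the hypothesis $C_0 \in (0,\omega)\cup(\omega,\infty)$ in the \eqref{NLS2} case, and is immediate in the other two cases since only a single correction contributes. Because $\alpha<2$, the $\lambda^{\alpha}$-term dominates $\lambda^2 E_0$ as $t \to T$, so $E_{\mathrm{crit}}(v(t)) \to 0$. The concentration-compactness principle applied to critical-mass sequences with vanishing Gagliardo--Nirenberg defect (the translation parameter being killed by radial symmetry) then gives $e^{i\gamma(t)} v(t) \to Q$ in $H^1$ along every subsequence $t_n \to T$, and the non-degeneracy of $Q$ as a critical point of $E_{\mathrm{crit}}$ on the mass sphere upgrades this into the quantitative expansion $\|v(t) - e^{i\gamma(t)} Q\|_{H^1}^2 \sim E_{\mathrm{crit}}(v(t)) \sim \lambda(t)^{\alpha}$, up to a suitable modulation.

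Step 2: differential inequality and integration. I would introduce modulation parameters $(\lambda(t), b(t), \gamma(t))$ so that the perturbation $\varepsilon(s)$ of $Q$ built from $v$ is orthogonal to the generalized null space of the linearized operator around $Q$, and pass to the rescaled time $ds = dt/\lambda^2$. Projecting the rescaled equation onto the directions $\Lambda Q$ and $|y|^2 Q$ produces the modulation system
\[
\frac{\lambda_s}{\lambda} + b = O\bigl(\|\varepsilon\|_{H^1}\bigr), \qquad b_s + c_\infty\, \lambda^{\alpha} = O\bigl(\|\varepsilon\|_{H^1}^2\bigr) + o(\lambda^{\alpha}).
\]
Combining with the static bound $\|\varepsilon\|_{H^1} \sim \lambda^{\alpha/2}$ from Step~1 yields $b \sim \lambda^{\alpha/2}$, hence $\lambda_t = \lambda_s/\lambda^2 \sim -\lambda^{\alpha/2 - 1}$. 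Integrating between $t$ and $T$ with $\lambda(T) = 0$ gives $\lambda(t)^{(4-\alpha)/2} \lesssim T - t$, which is the claim.

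The main obstacle is that the hypothesis $u \in H^1_{\mathrm{rad}}$ does not imply $u \in \Sigma^1$, so the second-moment type integrals that drive the virial and modulation computations need not be defined. The plan for overcoming this is to perform the virial computation on a radial cutoff $\phi_R(x)|u|^2$ with $R \sim \lambda(t)^{1-\eta}$ for a small $\eta > 0$, and use the Strauss decay estimate for radial $H^1$ functions (whose requirement $N \ge 2$ matches the standing hypothesis) together with the concentration of mass from Step~1 to show that the truncation errors are strictly smaller than the main term $\lambda^{\alpha/2-1}$. The singular potential $|x|^{-2\sigma}$ inside the localized virial is handled via the restriction $\sigma < \min\{N/2,1\}$ and Hardy's inequality. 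Bookkeeping all error contributions at the sharp orders $\lambda^{\alpha/2}$ (static) and $\lambda^{\alpha/2-1}$ (dynamic) is the technical heart of the argument.
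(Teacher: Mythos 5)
Your overall strategy --- concentration of $v(t)$ onto $Q$ from $E_{\text{crit}}(v(t))\to 0$, a modulation decomposition, a static bound $\tilde b^2+\|\varepsilon\|_{H^1}^2\lesssim\tilde\lambda^\alpha$ from mass/energy conservation plus coercivity of the linearized energy, then the modulation law $\frac{\lambda_s}{\lambda}+b=O(\cdot)$ and integration of $\bigl|\partial_t\lambda^{2-\alpha/2}\bigr|\lesssim1$ --- is exactly the paper's proof (Lemma \ref{bepsiesti} combined with Lemma \ref{Modesti-1}). Two caveats. First, several of your ``$\sim$'' should be ``$\lesssim$'': conservation plus coercivity give only the one-sided bounds $|b|\lesssim\lambda^{\alpha/2}$ and $\|\varepsilon\|_{H^1}\lesssim\lambda^{\alpha/2}$, not two-sided equivalences; and ``$c_\infty\neq0$'' is not the right condition --- you need the coefficient $\frac{C_1}{p+1}\|Q\|_{p+1}^{p+1}+\frac{C_2}{2}\||\cdot|^{-\sigma}Q\|_2^2$ to be \emph{positive} (this holds precisely in the sign/parameter combinations not already excluded by Theorems \ref{Thm:nonexist-1} and \ref{Thm:nonexist-2}, so the remaining cases are vacuous); if it were negative the same identity would yield a contradiction rather than a rate. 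Fortunately only the one-sided bounds are needed to get $|\lambda\lambda_t|\lesssim|b|+\|\varepsilon\|_{H^1}^2\lesssim\lambda^{\alpha/2}$ and integrate.

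Second, and this is the substantive divergence: the obstacle you flag ($u\in H^1_{\mathrm{rad}}$ but not $\Sigma^1$) and the localized virial with cutoff $R\sim\lambda^{1-\eta}$ that you defer to as ``the technical heart'' are not needed --- and since you leave that computation entirely unexecuted, it would be a genuine gap if it were needed. The paper's argument uses no virial identity and no second moment of $u$ or $\varepsilon$: one writes $u=\tilde\lambda^{-N/2}(P+\tilde\varepsilon)(x/\tilde\lambda)e^{-i\tilde b|x|^2/(4\tilde\lambda^2)+i\tilde\gamma}$ and works with $\hat\varepsilon:=\tilde\varepsilon e^{-i\tilde b|y|^2/4}\in H^1$, so that every weighted quantity appearing (the orthogonality condition $(\tilde\varepsilon,|y|^2P)_2=0$, the projections onto $\Lambda P$, $|y|^2P$, $\rho$ in Lemma \ref{Modesti-1}, and the $\frac{\tilde b^2}{8}\|yQ\|_2^2$ term in the energy expansion) pairs the weight against the exponentially decaying profile and is finite for $\varepsilon\in L^2$. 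Replacing your localized virial by this observation closes the argument and makes your proof coincide with the paper's.
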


\begin{theorem}[Behaviour of a minimal-mass blow-up solution 2]
\label{Thm:propmmbs-2}
Assume $N\geq 2$ and that $u$ is a radial $H^1$-solution for \eqref{NLS2} with $C_0=\omega$ and $\|u(0)\|_2=\|Q\|_2$. Then
\[
E(u)>0
\] 
holds. Moreover, if $u$ blows up at a finite time $T$, then the blow-up rate is estimated by
\[
\|\nabla u(t)\|_2\gtrsim\frac{1}{|T-t|}\quad(t\rightarrow T).
\]
\end{theorem}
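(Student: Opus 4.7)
The plan is to split the argument into the energy positivity and the blow-up rate estimate, with the second building on the first.

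For the energy positivity $E(u) > 0$, the starting point is the sharp mass-critical Gagliardo--Nirenberg inequality, which gives $E_{\text{crit}}(v) := \frac{1}{2}\|\nabla v\|_2^2 - \frac{1}{2+4/N}\|v\|_{2+4/N}^{2+4/N} \geq 0$ on the critical-mass sphere, with equality precisely on the modulational orbit $\mathcal{O}_Q := \{e^{i\theta}\mu^{N/2}Q(\mu\,\cdot\,)\}$. The balance condition $C_0 = \omega$, combined with $\alpha_p = \alpha_\sigma = \alpha$, arranges for the two perturbation terms $-\frac{C_1}{p+1}\|v\|_{p+1}^{p+1}$ and $-\frac{C_2}{2}\||x|^{-\sigma}v\|_2^2$ to share the common scaling $\lambda^{2-\alpha}$ under $L^2$-invariant dilations and to cancel exactly on $\mathcal{O}_Q$ (this is the defining property of $\omega$). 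I would then combine a refined coercivity estimate for $E$ near $\mathcal{O}_Q$ with a dynamical exclusion: since $Q$ is not a standing wave of \eqref{NLS2}, a radial solution of \eqref{NLS2} cannot remain confined to $\mathcal{O}_Q$; conservation of $E$ then forces the strict inequality $E(u) > 0$.

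For the blow-up rate lower bound, the main tool is the virial identity
\[
\frac{d^2}{dt^2}\|xu(t)\|_2^2 = 16 E(u) + 4\alpha D(u(t)), \quad D(u) := \tfrac{2C_1}{p+1}\|u\|_{p+1}^{p+1} + C_2\||x|^{-\sigma}u\|_2^2,
\]
which follows by direct computation (the matching $\alpha_p = \alpha_\sigma$ produces the common multiplier $4\alpha$). Under the balance condition, $D$ vanishes on $\mathcal{O}_Q$. A Merle-type concentration-compactness argument for radial critical-mass solutions forces $u(t)$ to approach $\mathcal{O}_Q$ after rescaling near $T$, so that $D(u(t)) = o(\|\nabla u(t)\|_2^{2-\alpha})$ is of lower order relative to $16 E(u) > 0$. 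Setting $F(t) := \|xu(t)\|_2^2$, integrating $F''(t) \approx 16 E(u) > 0$ twice and applying the Cauchy--Schwarz inequality $|F'(t)|^2 \leq 16 F(t) \|\nabla u(t)\|_2^2$ near the blow-up time yields the desired lower estimate $\|\nabla u(t)\|_2 \gtrsim 1/|T-t|$.

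The principal obstacle is the strict positivity of $E$ in the first part. Since $E$ vanishes identically on $\mathcal{O}_Q$ under the balance condition, no pointwise variational inequality can distinguish strict from non-strict positivity; the dynamical exclusion using the precise form of \eqref{NLS2} and the non-standing-wave property of $Q$ is essential. Making this exclusion rigorous, while controlling its interaction with the $L^2$-mass constraint and the low-order perturbation term $D(u(t))$ in the virial identity, is the main technical hurdle.
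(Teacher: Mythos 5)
Your treatment of the positivity part is broadly in the spirit of the paper: the paper likewise expands the conserved energy around the modulated soliton family, uses the exact cancellation at $C_0=\omega$ of the two $O(\tilde{\lambda}^\alpha)$ terms together with the coercivity \eqref{Lcoer} of $L_\pm$ to obtain $\tilde{\lambda}^2E(u)\gtrsim\tilde{b}^2+\|\hat{\varepsilon}\|_{H^1}^2$ (Lemma \ref{bepsiesti}), and then excludes $E=0$ by a rigidity argument: $E=0$ forces $\tilde{b}\equiv0$ and $\tilde{\varepsilon}\equiv0$, so $u=P_{\tilde{\lambda},0,\tilde{\gamma}}$ exactly, and substituting into the equation and taking imaginary parts forces $\tilde{\lambda}$ to be constant, contradicting concentration. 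Your ``dynamical exclusion'' is the same idea, but note that the relevant manifold is the $(\lambda,b,\gamma)$-modulated family of the corrected profile $P$ (not just $\mathcal{O}_Q$), and the quantitative coercivity --- in particular the $\tfrac{\tilde{b}^2}{8}\|yQ\|_2^2$ term produced by the quadratic phase --- is precisely the content you would still have to supply; without it, ``$E$ vanishes only on the orbit'' does not follow from the sharp Gagliardo--Nirenberg inequality alone.

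The rate part has genuine gaps. First, the error term in your virial identity is not controlled: writing $u=\tilde{\lambda}^{-N/2}(Q+\epsilon)(\cdot/\tilde{\lambda})e^{i\cdot}$ and using that $D$ vanishes on the orbit, one gets $D(u(t))=\tilde{\lambda}^{-(2-\alpha)}O(\|\epsilon\|_{H^1})$; qualitative convergence to the orbit only gives $\|\epsilon\|_{H^1}=o(1)$, so $D(u(t))=o(\tilde{\lambda}^{-(2-\alpha)})$ is a quantity that may \emph{diverge} --- it is not small relative to the constant $16E(u)$. Even with the sharp bound $\|\hat{\varepsilon}\|_{H^1}\lesssim\tilde{\lambda}$ from Lemma \ref{bepsiesti} one only gets $D(u)=O(\tilde{\lambda}^{\alpha-1})$, unbounded when $\alpha<1$, a case the theorem covers. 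Second, even granting $F''(t)\to16E>0$, ``integrate twice and apply Cauchy--Schwarz'' does not yield $(T-t)^{-1}$: if $F(T)>0$ one gets nothing, and if $F(T)=0$ then $F\geq0$ and $F''\geq c>0$ force $F(t)\geq\tfrac{c}{2}(T-t)^2$ while $|F'(t)|\leq|F'(T^-)|+C(T-t)$, so $|F'|/(4\sqrt{F})$ is at best of order $(T-t)^{-1/2}$ (and merely bounded when $F'(T^-)=0$). Extracting the rate requires either a refined Banica-type bound $|F'|\lesssim\sqrt{F}\sqrt{E_{\text{crit}}(u)}$ together with $F(T)=0$ and a lower bound $F(t)\gtrsim\tilde{\lambda}(t)^2$, or --- as the paper does --- the modulation estimate $|\tilde{\lambda}\partial_t\tilde{\lambda}+\tilde{b}|\lesssim\|\hat{\varepsilon}\|_{H^1}^2+\tilde{\lambda}^{2\alpha}$ from Lemma \ref{Modesti-1} combined with $|\tilde{b}|\lesssim\tilde{\lambda}$ from Lemma \ref{bepsiesti}, which integrates directly to $\tilde{\lambda}(t)\lesssim T-t$. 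Finally, the virial identity needs $xu(t)\in L^2$, which is not part of the hypothesis ($u$ is only an $H^1$ solution); a localized virial would introduce further errors you have not accounted for.
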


Theorems \ref{Thm:exist-1}, \ref{Thm:exist-2}, \ref{Thm:nonexist-1}, and \ref{Thm:nonexist-2} are shown in the same way as in the proofs in \cite{LMR,MIP}. Therefore, their proofs are only leave in Remark \ref{betapositive}. Henceforth, we only prove Theorems \ref{Thm:exist-3}, \ref{Thm:propmmbs-2}, and \ref{Thm:propmmbs-1}.

\subsection{Comments regarding the main results}
Theorem \ref{Thm:exist-3} implies that the power nonlinearity and the inverse power potential behave in the same way with respect to blow-up. In this paper, we construct blow-up profile $P$ as an approximate solution for
\[
i\frac{\partial P}{\partial s}+\Delta P-P+|P|^\frac{4}{N}P\pm C_0\lambda^{\alpha_p}|P|^{p-1}P\mp\lambda^{\alpha_\sigma} \frac{1}{|y|^{2\sigma}}P=0
\]
and treat $\lambda^{\alpha_p}|P|^{p-1}P$ and $\lambda^{\alpha_\sigma} \frac{1}{|y|^{2\sigma}}P$ as perturbation terms. If $\alpha_p\neq\alpha_\sigma$, then we expect that the blow-up is dominated by the perturbation term for which $\alpha$ is smaller since $\lambda(s)\rightarrow0$ as $s\rightarrow\infty$. Namely, it is expected that similar conclusion to the result in \cite{LMR} or \cite{MIP} will hold. Likewise, if $\alpha_p=\alpha_\sigma$, then we expect that the blow-up for \eqref{NLS} is dominated by the perturbation term for which $C_j$ is larger. Theorems \ref{Thm:exist-1}, \ref{Thm:exist-2}, \ref{Thm:nonexist-1}, and \ref{Thm:nonexist-2} imply that the threshold of these results is $C_1=\omega C_2$. Moreover, Theorem \ref{Thm:exist-3} states that \eqref{NLS} with $C_1\pm \omega$ and $C_2=\mp1$ (i.e., \eqref{NLS2} with $C_0=\omega$) has a minimal-mass blow-up solution with the same blow-up rate $t^{-1}$ as for the mass-critical problem \eqref{CNLS}.

In Theorem \ref{Thm:exist-3}, unlike Theorems \ref{Thm:exist-1} and \ref{Thm:exist-2}, there is a condition that the energy level $E_0$ is positive. In \cite{LMR,MIP}, $\lambda_1$ and $b_1$ are defined by
\[
s_1:=\int_{\lambda_1}^{\lambda_0}\frac{1}{\mu^{\frac{\alpha}{2}+1}\sqrt{\frac{2\beta}{2-\alpha}+\frac{8E_0}{\|yQ\|_2^2}\mu^{2-\alpha}}}d\mu,\quad E(P_{\lambda_1,b_1,0})=E_0
\]
for sufficiently large $s_1$. Since $\beta=0$ holds under the assumptions of Theorem \ref{Thm:exist-3}, a formal application of this definition requires $E_0>0$. Thus, in the proof of Theorem \ref{Thm:exist-3}, the condition is merely a technical requirement. However, Theorem \ref{Thm:propmmbs-2} states that the critical-mass blow-up solution for \eqref{NLS2} (not necessarily a finite time blow-up) has always positive energy. Therefore, the assumption of positive energy in Theorem \ref{Thm:exist-3} is inevitable.

The lower estimation in Theorem \ref{Thm:propmmbs-2} or \ref{Thm:propmmbs-1} could be shown by modifying Lemma \ref{decomposition} and separating Lemma \ref{Modesti-1} from Lemma \ref{Modesti-2}. Firstly, assuming that the solution blows up at $T$, it can be decomposed by using Lemma \ref{decomposition}. The parameters $\tilde{\lambda}$ and $\tilde{\varepsilon}$ of the decomposition are expected to converge to $0$ as $t\to T$. We have corrected the ``There exist $\overline{C}>0$'' in the old decomposition lemma (e.g., \cite[Lemma 4.1]{MIP}) to ``For any $\epsilon_0$'' in Lemma \ref{decomposition}, thus easily showing that the parameters $\tilde{\lambda}$ and $\tilde{\varepsilon}$ converge to $0$ as $t\to T$. If we want to show the uniqueness of blow-up rates, we need to estimate this parameters $\tilde{\lambda}$ and $\tilde{\varepsilon}$. For this purpose, one can consider using a bootstrap, as in the case of the construction of a minimal-mass blow-up solution. However, while this bootstrap assumes $\tilde{\varepsilon}(t_1)=0$, it is not clear whether this holds for $\tilde{\varepsilon}$ of general critical-mass blow-up solutions. Therefore, we avoid this approach and partially solve the problem by using Lemma \ref{Modesti-1}. In particular, the lower estimations in Theorems \ref{Thm:propmmbs-2} and \ref{Thm:propmmbs-1} are optimal estimations, because the solutions with the blow-up rates are actually constructed in Theorems \ref{Thm:exist-1}, \ref{Thm:exist-2}, and \ref{Thm:exist-3}.

Let $u$ be blow-up solution for
\[
i\frac{\partial u}{\partial t}+\Delta u+\mu|u|^{p-1}u=0,\quad 1+\frac{4}{N}\leq p<\frac{4}{N-2}\quad\left(1+\frac{4}{N}\leq p<\infty\ \text{if}\ N=1\right),
\]
where $\mu>0$. It is known that
\[
\|\nabla u(t)\|_2\gtrsim\frac{1}{|T-t|^{\frac{1}{p-1}-\frac{N-2}{4}}}\quad\text{as}\ t\to T
\]
if $u$ blows up at a finite time $T$ (see \cite[Theorem 6.5.13]{CSSE}). In particular, when $p=\frac{4}{N}+1$, the lower estimation is as follows:
\[
\|\nabla u(t)\|_2\gtrsim\frac{1}{|T-t|^{\frac{1}{2}}}\quad\text{as}\ t\to T.
\]
Compared to this lower estimate, the lower estimate in Theorem \ref{Thm:propmmbs-2} is better, although there is a requirement that the solution has a critical mass.

In Theorem \ref{Thm:exist-3}, $\alpha>1$ is required. Assuming $\alpha<1$, there may be a minimal-mass blow-up solution with a blow-up rate that is not $t^{-1}$. For $P_{0,0}^+$, which constitutes the blow-up profile $P$ (see \eqref{Pdef}), if $\left\langle L_+P_{0,0}^+,P_{0,0}^+\right\rangle\neq0$, then we obtain
\[
\left\langle L_+P_{0,0}^+,P_{0,0}^+\right\rangle>0
\]
since $(P_{0,0}^+,Q)_2=0$. Therefore, $\beta_{0,1}>0$ and
\[
\lambda_{\text{app}}(s)=\left(\alpha\sqrt{\frac{\beta_{0,1}}{1-\alpha}}\right)^{-\frac{1}{\alpha}}s^{-\frac{2}{\alpha}},\quad b_{\text{app}}(s)=\frac{1}{\alpha s}
\]
are solutions for
\[
\frac{\partial b}{\partial s}+b^2-\beta_{0,1}\lambda^{2\alpha}=0,\quad \frac{1}{\lambda}\frac{\partial \lambda}{\partial s}+b=0
\]
in $s>0$. Accordingly, we expect the existence of a minimal-mass blow-up solution with a blow-up rate $|t|^{-\frac{1}{2-\alpha}}$.

\subsection{Notations}
In this section, we introduce the notation used in this paper.

Let
\[
\mathbb{N}:=\mathbb{Z}_{\geq 1},\quad\mathbb{N}_0:=\mathbb{Z}_{\geq 0}.
\]
We define
\begin{align*}
(u,v)_2&:=\re\int_{\mathbb{R}^N}u(x)\overline{v}(x)dx,&\left\|u\right\|_p&:=\left(\int_{\mathbb{R}^N}|u(x)|^pdx\right)^\frac{1}{p},&&&&\\
f(z)&:=|z|^\frac{4}{N}z,&F(z)&:=\frac{1}{2+\frac{4}{N}}|z|^{2+\frac{4}{N}}&g(z)&:=|z|^{p-1}z,&G(z)&:=\frac{1}{p+1}|z|^{p+1}\quad \text{for $z\in\mathbb{C}$}.
\end{align*}
By identifying $\mathbb{C}$ with $\mathbb{R}^2$, we denote the differentials of $f$, $g$, $F$, and $G$ by $df$, $dg$, $dF$, and $dG$ , respectively. We define
\[
\Lambda:=\frac{N}{2}+x\cdot\nabla,\quad L_+:=-\Delta+1-\left(1+\frac{4}{N}\right)Q^\frac{4}{N},\quad L_-:=-\Delta+1-Q^\frac{4}{N}.
\]
Namely, $\Lambda$ is the generator of $L^2$-scaling, and $L_+$ and $L_-$ come from the linearised Schr\"{o}dinger operator to around $Q$. Then
\[
L_-Q=0,\quad L_+\Lambda Q=-2Q,\quad L_-|x|^2Q=-4\Lambda Q,\quad L_+\rho=|x|^2 Q
\]
hold, where $\rho\in\mathcal{S}(\mathbb{R}^N)$ is the unique radial solution for $L_+\rho=|x|^2 Q$. Note that there exist $C_\alpha,\kappa_\alpha>0$ such that
\[
\left|\left(\frac{\partial}{\partial x}\right)^\alpha Q(x)\right|\leq C_\alpha Q(x),\quad \left|\left(\frac{\partial}{\partial x}\right)^\alpha \rho(x)\right|\leq C_\alpha(1+|x|)^{\kappa_\alpha} Q(x).
\]
for any multi-index $\alpha$. Furthermore, there exists $\mu>0$ such that for all $u\in H_{\text{rad}}^1(\mathbb{R}^N)$,
\begin{align}
\label{Lcoer}
&\left\langle L_+\re u,\re u\right\rangle+\left\langle L_-\im u,\im u\right\rangle\nonumber\\
\geq&\ \mu\left\|u\right\|_{H^1}^2-\frac{1}{\mu}\left({(\re u,Q)_2}^2+{(\re u,|x|^2 Q)_2}^2+{(\im u,\rho)_2}^2\right)
\end{align}
(e.g., see \cite{MRO,MRUPB,RSEU,WL}). We denote by $\mathcal{Y}$ the set of functions $g\in C^{\infty}(\mathbb{R}^N\setminus\{0\})\cap C(\mathbb{R}^N)\cap H^1_{\text{rad}}(\mathbb{R}^N)$ such that
\[
\exists C_\alpha,\kappa_\alpha>0,\ |x|\geq 1\Rightarrow \left|\left(\frac{\partial}{\partial x}\right)^\alpha g(x)\right|\leq C_\alpha(1+|x|)^{\kappa_{\alpha}}Q(x)
\]
for any multi-index $\alpha$. Moreover, we defined by $\mathcal{Y}'$ the set of functions $g\in\mathcal{Y}$ such that
\[
\Lambda g\in H^1(\mathbb{R}^N)\cap C(\mathbb{R}^N).
\]

Finally, we use the notation $\lesssim$ and $\gtrsim$ when the inequalities hold up to a positive constant. We also use the notation $\approx$ when $\lesssim$ and $\gtrsim$ hold. Moreover, positive constants $C$ and $\epsilon$ are sufficiently large and small, respectively.

\section{Construction of a blow-up profile}
For $K\in\mathbb{N}_0$, let
\[
\Sigma_K=\{\ (j,k)\in{\mathbb{N}_0}^2\ |\ j+k\leq K\ \}.
\]

\begin{proposition}[Existence of a blow-up profile]
Let $K,K'\in\mathbb{N}_0$ be sufficiently large. Let $\lambda(s)>0$ and $b(s)\in\mathbb{R}$ be $C^1$ functions of $s$ such that $\lambda(s)+|b(s)|\ll 1$. Then for any $(j,k)\in\Sigma_{K+K'}$, there exist $P_{j,k}^+,P_{j,k}^-\in\mathcal{Y}'$, $\beta_{j,k}\in\mathbb{R}$, and $\Psi:(\lambda,b)\mapsto \Psi(\cdot;\lambda,b)\in H^1(\mathbb{R}^N)$ such that $P$ satisfies
\begin{align}
\label{Peq}
i\frac{\partial P}{\partial s}+\Delta P-P+f(P)\pm C_0\lambda^\alpha g(P)\mp\lambda^\alpha \frac{1}{|y|^{2\sigma}}P+\theta\frac{|y|^2}{4}P=\Psi,
\end{align}
where $P$ and $\theta$ defined by
\begin{align}
\label{Pdef}
P(s,y)&=Q(y)+\sum_{(j,k)\in\Sigma_{K+K'}}\left(b(s)^{2j}\lambda(s)^{(k+1)\alpha}P_{j,k}^+(y)+ib(s)^{2j+1}\lambda(s)^{(k+1)\alpha}P_{j,k}^-(y)\right)\\
\theta(s)&=\sum_{(j,k)\in\Sigma_{K+K'}}b(s)^{2j}\lambda(s)^{(k+1)\alpha}\beta_{j,k}\nonumber
\end{align}
In particular,
\[
\beta_{0,0}=0
\]
holds.

Moreover, for some sufficiently small $\epsilon'>0$,
\begin{align}
\label{Psiesti}
\left\|e^{\epsilon'|\cdot|}\Psi\right\|_{H^1}\lesssim\lambda^\alpha\left(\left|b+\frac{1}{\lambda}\frac{\partial \lambda}{\partial s}\right|+\left|\frac{\partial b}{\partial s}+b^2-\theta\right|\right)+(b^2+\lambda^\alpha)^{K+2}
\end{align}
holds.
\end{proposition}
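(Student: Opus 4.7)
The plan is to insert the ansatz \eqref{Pdef} directly into \eqref{Peq}, formally replace the time derivatives by the modulation laws $\partial_s\lambda/\lambda\equiv -b$ and $\partial_s b\equiv -b^2+\theta$ (absorbing the discrepancies into $\Psi$), Taylor-expand $f$, $g$, and the potential action around $Q$ in powers of $W:=P-Q=O(b^2+\lambda^\alpha)$, and then match coefficients monomial by monomial in $(b,\lambda^\alpha)$. Using $f(Q+W)=f(Q)+df(Q)\cdot W+\frac{1}{2}d^2f(Q)\cdot(W,W)+\cdots$ (and similarly for $g$) together with the linearisation identity $\Delta W-W+df(Q)\cdot W=-L_+\re W-iL_-\im W$, the coefficient of $b^{2j}\lambda^{(k+1)\alpha}$ on the real side yields an equation of the form $L_+P^+_{j,k}=F^+_{j,k}$, and the coefficient of $b^{2j+1}\lambda^{(k+1)\alpha}$ on the imaginary side yields $L_-P^-_{j,k}=F^-_{j,k}$, with each $F^{\pm}_{j,k}$ depending only on data of strictly smaller index in an appropriate lexicographic order on $\Sigma_{K+K'}$.

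I would then solve this hierarchy by induction. Since $L_+$ is invertible on the radial subspace, the equation for $P^+_{j,k}$ is solved outright. Since $L_-$ has radial kernel $\mathrm{span}\{Q\}$, the equation for $P^-_{j,k}$ is solvable iff $(F^-_{j,k},Q)_2=0$, and $\beta_{j,k}$ is precisely the free scalar parameter appearing in $F^\pm_{j,k}$ (through the contribution $\beta_{j,k}\frac{|y|^2}{4}Q$ of $\theta\frac{|y|^2}{4}P$) that can be tuned to enforce this compatibility. At the base of the hierarchy, the real equation at order $\lambda^\alpha$ reads
\[
L_+P^+_{0,0}=\pm C_0Q^p\mp |y|^{-2\sigma}Q+\beta_{0,0}\tfrac{|y|^2}{4}Q,
\]
while the imaginary equation at order $b\lambda^\alpha$ reduces, via the $i\partial_sP$ term, to $L_-P^-_{0,0}=-\alpha P^+_{0,0}$, whose compatibility condition is $(P^+_{0,0},Q)_2=0$. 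Pairing with $\Lambda Q$ through $L_+\Lambda Q=-2Q$, computing $(Q^p,\Lambda Q)_2=\frac{N(p-1)}{2(p+1)}\|Q\|_{p+1}^{p+1}$, $(|y|^{-2\sigma}Q,\Lambda Q)_2=\sigma\||y|^{-\sigma}Q\|_2^2$, and $(|y|^2Q,\Lambda Q)_2=-\|yQ\|_2^2$ by integration by parts, together with $\alpha_p=\alpha_\sigma$ (equivalently $N(p-1)=4\sigma$) and the definition of $\omega$, shows that the $C_0$-and-potential bracket vanishes identically under $C_0=\omega$, forcing $\beta_{0,0}=0$ as stated. Higher $\beta_{j,k}$ are determined analogously and are generically nonzero.

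To check $P^\pm_{j,k}\in\mathcal{Y}'$, I would argue inductively: the sources $F^\pm_{j,k}$ are polynomial combinations of $Q$, $\Lambda Q$, $\rho$, already-constructed $P^\pm_{j',k'}$, smooth derivatives of $f$ and $g$ at $Q$, and the factor $|y|^{-2\sigma}$ (controlled near the origin by $\sigma<\min\{N/4,1\}$ and by Hardy's inequality), hence lie in $\mathcal{Y}$; the inverses $L_\pm^{-1}$ preserve radial smoothness and transmit exponential decay (up to polynomial growth) thanks to the spectral gap underlying \eqref{Lcoer} and the exponential decay of the resolvent kernels of $L_\pm$. Finally, the remainder $\Psi$ splits into three pieces: modulation errors proportional to $b+\partial_s\lambda/\lambda$ and $\partial_sb+b^2-\theta$, each carrying a prefactor $\lambda^\alpha$ and multiplying an element of $\mathcal{Y}'$; truncation tails of the Taylor expansions of $f$ and $g$, pointwise bounded by $|W|^{K+3}=O((b^2+\lambda^\alpha)^{K+2})$; and the index-set tail beyond $\Sigma_{K+K'}$, of the same size. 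Choosing $\epsilon'$ strictly smaller than the exponential decay rate of $Q$, the weight $e^{\epsilon'|y|}$ is absorbed by the decay of each factor in $\mathcal{Y}'$, yielding \eqref{Psiesti}.

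The main technical difficulty is the combinatorial bookkeeping: one must verify that after inserting the modulation laws every monomial $b^{2j+m}\lambda^{(k+1)\alpha}$ receives contributions only from $P^\pm_{j',k'}$ with $(j',k')$ strictly prior, so that the hierarchy genuinely closes at each stage, and one must track how the singular factor $|y|^{-2\sigma}$ propagates through the iterative solvability and through the exponentially weighted $H^1$ estimate without destroying either. The algebraic identity $\beta_{0,0}=0$ is the precise signature of the balance condition $C_0=\omega$ combined with $\alpha_p=\alpha_\sigma$, and it is what distinguishes this construction from those of \cite{LMR,MIP}.
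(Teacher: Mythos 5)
Your proposal is correct and follows essentially the same route as the paper, which itself defers the general iterative construction of the $P_{j,k}^\pm$ and $\beta_{j,k}$ to \cite{LMR,MIP} and only verifies the new algebraic fact $\beta_{0,0}=0$. Your key computation is identical to the paper's: the solvability condition $(P_{0,0}^+,Q)_2=0$, paired against $\Lambda Q$ via $L_+\Lambda Q=-2Q$ and the identities $\left(g(Q),\Lambda Q\right)_2=\tfrac{N(p-1)}{2(p+1)}\|Q\|_{p+1}^{p+1}$, $\left(|y|^{-2\sigma}Q,\Lambda Q\right)_2=\sigma\||\cdot|^{-\sigma}Q\|_2^2$, $\left(|y|^2Q,\Lambda Q\right)_2=-\|yQ\|_2^2$, shows that under $N(p-1)=4\sigma$ and $C_0=\omega$ the source terms cancel and forces $\beta_{0,0}=0$.
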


\begin{proof}
The proof is the same as for \cite{LMR,MIP}. We prove only $\beta_{0,0}=0$. From the proofs in \cite{LMR,MIP}, $P_{0,0}^+$ satisfies
\[
L_+P_{0,0}^+-\beta_{0,0}\frac{|y|^2}{4}Q\mp C_0g(Q)\pm\frac{1}{|y|^{2\sigma}}Q=0,\quad (P_{0,0}^+,Q)_2=0.
\]
Therefore, since $C_0=\omega$,
\begin{align*}
0&=\left(P_{0,0}^+,Q\right)_2=-\frac{1}{2}\left\langle L_+P_{0,0}^+,\Lambda Q\right\rangle=-\frac{1}{2}\left\langle\beta_{0,0}\frac{|y|^2}{4}Q\pm C_0g(Q)\mp\frac{1}{|y|^{2\sigma}}Q,\Lambda Q\right\rangle\\
&=\frac{1}{2}\left(\frac{\beta_{0,0}}{4}\||\cdot|Q\|_2^2\mp C_0\frac{N(p-1)}{2(p+1)}\|Q\|_{p+1}^{p+1}\pm \sigma\||\cdot|^{-\sigma}Q\|_2^2\right).
\end{align*}
Accordingly, $\beta_{0,0}=0$.
\end{proof}

\begin{remark}
\label{betapositive}
Assume $(\pm,\mp)=(+,-)$ and $C_0>\omega$. Then we obtain
\[
0=\left(P_{0,0}^+,Q\right)_2=\frac{1}{2}\left(\frac{\beta_{0,0}}{4}\||\cdot|Q\|_2^2-(C_0-\omega)\frac{N(p-1)}{2(p+1)}\|Q\|_{p+1}^{p+1}\right).
\]
Therefore, $\beta_{0,0}>0$. Likewise, if $(\pm,\mp)=(-,+)$ and $C_0<\omega$, we obtain $\beta_{0,0}>0$. Thus, Theorems \ref{Thm:exist-1} and \ref{Thm:exist-2} can be shown as in \cite{LMR,MIP}.
\end{remark}

For the blow-up profile $P$, the following properties are obtained by direct calculation:

\begin{proposition}
\label{Pprop}
Let define
\[
P_{\lambda,b,\gamma}(s,x):=\frac{1}{\lambda(s)^\frac{N}{2}}P\left(s,\frac{x}{\lambda(s)}\right)e^{-i\frac{b(s)}{4}\frac{|x|^2}{\lambda(s)^2}+i\gamma(s)}.
\]
Then,
\begin{align*}
\left|\frac{d}{ds}\|P_{\lambda,b,\gamma}\|_2^2\right|&\lesssim\lambda^\alpha\left(\left|b+\frac{1}{\lambda}\frac{\partial \lambda}{\partial s}\right|+\left|\frac{\partial b}{\partial s}+b^2-\theta\right|\right)+(b^2+\lambda^\alpha)^{K+2},\\
\left|\frac{d}{ds}E(P_{\lambda,b,\gamma})\right|&\lesssim\frac{1}{\lambda^2}\left(\left|b+\frac{1}{\lambda}\frac{\partial \lambda}{\partial s}\right|+\left|\frac{\partial b}{\partial s}+b^2-\theta\right|+(b^2+\lambda^\alpha)^{K+2}\right)
\end{align*}
hold. Moreover,
\begin{eqnarray}
\label{Eesti}
\left|8E(P_{\lambda,b,\gamma})-\||\cdot|Q\|_2^2\frac{b^2}{\lambda^2}\right|\lesssim\frac{\lambda^\alpha(b^2+\lambda^\alpha)}{\lambda^2}
\end{eqnarray}
holds.
\end{proposition}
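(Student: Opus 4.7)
All three estimates follow from the $P$-equation \eqref{Peq} together with the scaling of $v := P_{\lambda,b,\gamma}$. For the first (mass) bound, the change of variable $y=x/\lambda$ gives $\|P_{\lambda,b,\gamma}\|_2^2=\|P\|_2^2$, so it suffices to test \eqref{Peq} against $P$ in $L^2$ and take the real part. The self-adjoint operator terms $-\Delta P+P-f(P)\mp C_0\lambda^\alpha g(P)\pm\lambda^\alpha|y|^{-2\sigma}P-\theta|y|^2 P/4$ pair with $P$ to produce real integrals, which multiply by $i$ (from $i\partial_s P$) to give purely imaginary contributions and hence drop out. Only the $\Psi$ term remains, and $\frac{d}{ds}\|P\|_2^2=-2\,\im\!\int P\,\overline{\Psi}\,dy$; Cauchy--Schwarz together with \eqref{Psiesti} yields the desired bound.

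For the energy derivative I would exploit the Hamiltonian structure of \eqref{NLS2}. Setting $G(v):=-\Delta v-f(v)\mp C_0 g(v)\pm|x|^{-2\sigma}v$ (the variational derivative of $E$), one has, whenever $v$ satisfies \eqref{NLS2} up to a residual $\Xi$ in an auxiliary time $t$ related by $dt=\lambda^2\,ds$, the identity $\frac{d}{dt}E(v)=-\im\!\int G(v)\overline\Xi\,dx$. A direct substitution using \eqref{Peq} shows that in the $(s,y)$-picture the residual equals $\Psi+M_2\,\frac{|y|^2}{4}P+(1-\gamma_s)P-iM_1\Lambda P$, where $M_1:=b+\lambda_s/\lambda$ and $M_2:=b_s+b^2-\theta$. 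The gauge invariance $E(e^{i\gamma}u)=E(u)$ kills the $(1-\gamma_s)$-term, and a change of variable gives both $\|G(v)\|_2\lesssim\lambda^{-2}$ and $\|\Xi\|_{L^2_x}\lesssim \lambda^{-2}(|M_1|+|M_2|+\|\Psi\|_{L^2_y})$. Therefore $|\tfrac{d}{ds}E(v)|=\lambda^2|\tfrac{d}{dt}E(v)|\lesssim \lambda^{-2}(|M_1|+|M_2|+\|\Psi\|_2)$, and the second bound follows from \eqref{Psiesti}.

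For \eqref{Eesti} I would use the scaling identities $\|\nabla v\|_2^2=\lambda^{-2}\|\nabla P-\tfrac{iby}{2}P\|_2^2$, $\|v\|_{p+1}^{p+1}=\lambda^{\alpha-2}\|P\|_{p+1}^{p+1}$, and $\||x|^{-\sigma}v\|_2^2=\lambda^{\alpha-2}\||y|^{-\sigma}P\|_2^2$ to rewrite $\lambda^2 E(v)$ as an explicit functional of $P,\lambda,b$, and then Taylor expand about $P=Q$ using \eqref{Pdef}. Four cancellations drive the estimate: (i) $E_{\text{crit}}(Q)=0$ kills the $P=Q$, $b=0$ base; (ii) the term linear in $P-Q$ reduces, after integration by parts using $-\Delta Q+Q-f(Q)=0$, to $-\!\int Q\,\re(P-Q)\,dy$, which vanishes by the orthogonalities $(P_{j,k}^+,Q)_2=0$ imposed in the construction; (iii) the assumption $C_0=\omega$ forces the $\lambda^\alpha$-coefficient $\mp\frac{C_0}{p+1}\|Q\|_{p+1}^{p+1}\pm\frac{1}{2}\||y|^{-\sigma}Q\|_2^2$ to vanish; (iv) the cross term $b\,\re\!\int iy\cdot\overline P\,\nabla P\,dy$ carries a factor of $\im P=O(b\lambda^\alpha)$ and so contributes only $O(b^2\lambda^\alpha)$. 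What survives is $\tfrac{b^2}{8}\|yQ\|_2^2 + O(\lambda^\alpha(b^2+\lambda^\alpha))$, which is \eqref{Eesti}.

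The main technical hurdle is the second estimate: one must pinpoint the exact structural cancellation that reduces $\frac{d}{ds}E(v)$ from the leading-order $O(1)$ pieces of $\partial_s v$ and $G(v)$ to something proportional only to the modulation errors and $\Psi$, while simultaneously capturing the $\lambda^{-2}$ scaling from the spatial derivatives in $G(v)$. The orthogonalities used in (ii) and the balance $C_0=\omega$ used in (iii) play an analogous structural role in the third estimate, collapsing would-be $O(\lambda^\alpha)$ remainders down to the required $O(\lambda^\alpha(b^2+\lambda^\alpha))$.
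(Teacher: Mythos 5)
Your proposal is correct and follows essentially the same route as the paper, which simply asserts these bounds are ``obtained by direct calculation'' and defers to \cite{LMR,MIP}: the mass bound via testing \eqref{Peq} against $P$ so that only the $\Psi$-pairing survives, the energy derivative via the Hamiltonian identity with the residual controlled by $|{\rm Mod}|$ and \eqref{Psiesti}, and \eqref{Eesti} via rescaling and Taylor expansion about $Q$ using $E_{\text{crit}}(Q)=0$, the orthogonality $(P_{j,k}^+,Q)_2=0$, and the cancellation $\mp\frac{C_0}{p+1}\|Q\|_{p+1}^{p+1}\pm\frac{1}{2}\||\cdot|^{-\sigma}Q\|_2^2=0$ forced by $C_0=\omega$. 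You correctly identify that this last cancellation (equivalently $\beta_{0,0}=0$) is exactly what removes the extra $O(\lambda^{\alpha-2})$ term present in the analogous estimates of \cite{LMR,MIP}.
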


This section closes with a decomposition lemma, which is key to this paper. See \cite{MRUPB,MIP} for the proof.

\begin{lemma}[Decomposition]
\label{decomposition}
For any $\epsilon_0>0$, there exist $\overline{l},\delta>0$ such that the following statement.
Let $I$ be an interval. We assume that $u\in C(I,H^1(\mathbb{R}^N))\cap C^1(I,H^{-1}(\mathbb{R}^N))$ satisfies
\[
\forall\ t\in I,\ \left\|\lambda(t)^{\frac{N}{2}}u\left(t,\lambda(t)y\right)e^{i\gamma(t)}-Q\right\|_{H^1}< \delta
\]
for some functions $\lambda:I\rightarrow(0,\overline{l})$ and $\gamma:I\rightarrow\mathbb{R}$. Then there exist unique functions $\tilde{\lambda}:I\rightarrow(0,\infty)$, $\tilde{b}:I\rightarrow\mathbb{R}$, and $\tilde{\gamma}:I\rightarrow\mathbb{R}\slash 2\pi\mathbb{Z}$ such that 
\begin{align}
\label{mod}
&u(t,x)=\frac{1}{\tilde{\lambda}(t)^{\frac{N}{2}}}\left(P+\tilde{\varepsilon}\right)\left(t,\frac{x}{\tilde{\lambda}(t)}\right)e^{-i\frac{\tilde{b}(t)}{4}\frac{|x|^2}{\tilde{\lambda}(t)^2}+i\tilde{\gamma}(t)},\\
&\left|\frac{\tilde{\lambda}(t)}{\lambda(t)}-1\right|+\left|\tilde{b}(t)\right|+\left|\tilde{\gamma}(t)+\gamma(t)\right|_{\mathbb{R}\slash 2\pi\mathbb{Z}}<\epsilon_0\nonumber
\end{align}
hold, where $|\cdot|_{\mathbb{R}\slash 2\pi\mathbb{Z}}$ is defined by
\[
|c|_{\mathbb{R}\slash 2\pi\mathbb{Z}}:=\inf_{m\in\mathbb{Z}}|c+2\pi m|,
\]
and that $\tilde{\varepsilon}$ satisfies the orthogonal conditions
\begin{align}
\label{orthocondi}
\left(\tilde{\varepsilon},i\Lambda P\right)_2=\left(\tilde{\varepsilon},|y|^2P\right)_2=\left(\tilde{\varepsilon},i\rho\right)_2=0
\end{align}
on $I$. In particular, $\tilde{\lambda}$, $\tilde{b}$, and $\tilde{\gamma}$ are $C^1$ functions and independent of $\lambda$ and $\gamma$.
\end{lemma}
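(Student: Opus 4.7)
The plan is to apply the implicit function theorem to a map whose zero locus encodes the three orthogonality conditions \eqref{orthocondi}. For fixed $t\in I$, I will reparametrize by $\mu=\tilde\lambda/\lambda$, $\beta=\tilde b$, $\eta=\tilde\gamma+\gamma$, set $w(y):=\lambda(t)^{N/2}u(t,\lambda(t)y)e^{i\gamma(t)}$ (which by hypothesis lies in a $\delta$-ball around $Q$ in $H^1$), and rewrite \eqref{mod} in the equivalent form
\[
v_{\mu,\beta,\eta}(y):=\mu^{N/2}w(\mu y)e^{i\beta|y|^2/4-i\eta}=P+\tilde\varepsilon,
\]
where $P$ denotes the profile \eqref{Pdef} evaluated at $(\tilde\lambda,\tilde b)=(\mu\lambda,\beta)$. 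I then introduce the $C^1$ map $F\colon(0,\infty)\times\mathbb{R}^2\times H^1\to\mathbb{R}^3$ defined by
\[
F(\mu,\beta,\eta,w)=\bigl((v_{\mu,\beta,\eta}-P,i\Lambda P)_2,\,(v_{\mu,\beta,\eta}-P,|y|^2P)_2,\,(v_{\mu,\beta,\eta}-P,i\rho)_2\bigr),
\]
so that producing $\tilde\lambda,\tilde b,\tilde\gamma$ is equivalent to solving $F=0$ near the reference point $(1,0,0,Q)$.

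The crucial step is invertibility of the leading Jacobian at $(1,0,0,Q)$. There $P-Q$ and both $\partial_\mu P,\partial_\beta P$ are $O(\lambda^\alpha)$ because every term of \eqref{Pdef} other than $Q$ carries a factor $\lambda^{(k+1)\alpha}$, whereas $\partial_\mu v=\Lambda Q$, $\partial_\beta v=\tfrac{i}{4}|y|^2Q$, $\partial_\eta v=-iQ$. Pairing real against imaginary functions annihilates five of the nine entries, and the surviving entries reduce, via $L_+\Lambda Q=-2Q$, $L_+\rho=|y|^2Q$ and integration by parts, to
\[
(\Lambda Q,|y|^2Q)_2=-\||y|Q\|_2^2,\qquad (Q,\rho)_2=\tfrac12\||y|Q\|_2^2.
\]
A cofactor expansion then gives a nonzero determinant $\tfrac{1}{8}\||y|Q\|_2^6$, and choosing $\overline l,\delta$ small absorbs the $O(\lambda^\alpha)+O(\delta)$ perturbations coming from $P\neq Q$ and $w\neq Q$.

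Once invertibility is in hand, the implicit function theorem furnishes, for any prescribed $\epsilon_0>0$, thresholds $\overline l,\delta>0$ such that whenever the hypothesis of the lemma holds, $F=0$ admits a unique solution $(\mu(t),\beta(t),\eta(t))$ with $|\mu-1|+|\beta|+|\eta|_{\mathbb{R}/2\pi\mathbb{Z}}<\epsilon_0$; the triple $\tilde\lambda=\mu\lambda$, $\tilde b=\beta$, $\tilde\gamma=\eta-\gamma$ then yields \eqref{mod} and the stated bound. The $C^1$ dependence of $(\tilde\lambda,\tilde b,\tilde\gamma)$ on $t$ comes from $u\in C^1(I,H^{-1})$ combined with the smoothness clause of the implicit function theorem, while independence from the auxiliary $(\lambda,\gamma)$ follows from uniqueness in IFT (any other admissible choice produces the same decomposition through the reparametrization).

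The main obstacle is the Jacobian computation: it succeeds only because of the specific directions $i\Lambda P,|y|^2P,i\rho$ chosen in \eqref{orthocondi} and of the identity $L_+\rho=|y|^2Q$, which is what forces $(Q,\rho)_2$ to be a nonzero multiple of $\||y|Q\|_2^2$. The upgrade from the old ``there exist $\overline C>0$'' to ``for any $\epsilon_0>0$'' is automatic in this argument, since IFT solutions depend continuously on the reference point and vanish there; this is exactly the improvement needed later to force the modulation parameters $\tilde\lambda,\tilde\varepsilon$ to converge to $0$ as $t\to T$ in the proof of the lower estimates.
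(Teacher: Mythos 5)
Your proposal is correct and follows essentially the same route as the paper, which defers the proof of Lemma \ref{decomposition} to \cite{MRUPB,MIP} and, in the accompanying remark, justifies the ``for any $\epsilon_0$'' refinement by exactly the continuity-and-uniqueness clause of the implicit function theorem that you invoke. Your Jacobian computation (the five vanishing pairings, the identities $(\Lambda Q,|y|^2Q)_2=-\||y|Q\|_2^2$ and $(Q,\rho)_2=\tfrac12\||y|Q\|_2^2$ via $L_+\Lambda Q=-2Q$ and $L_+\rho=|y|^2Q$, and the determinant $\tfrac18\||y|Q\|_2^6$) checks out, and the $O(\lambda^\alpha)$ treatment of the $(\tilde\lambda,\tilde b)$-dependence of $P$ is the standard perturbative step.
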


\begin{remark*}
This lemma is slightly different from \cite[Lemma 4.1]{MIP}, with ``There exist $\overline{C}>0$'' changed to ``For any $\epsilon_0$''. Not essential for the construction of a minimal-mass blow-up solution, but required in the proof of Theorems \ref{Thm:propmmbs-1} and \ref{Thm:propmmbs-2} (see \eqref{paramconv}).

This modification follows easily from the implicit function theorem. Indeed, for a $C^1$-function $f:U\to\mathbb{R}^{n+m}$ with $f(x_0,y_0)=0$ and $\det\frac{\partial f}{\partial y}(x_0,y_0)\neq 0$, from the implicit function theorem, there exist a neighbourhood $V$ of $x_0$, a neighbourhood $W$ of $y_0$, and a $C^1$-function $S:V\to W$ such that $f(x,S(x))=0$ for any $x\in V$. In particular, from the continuity and uniqueness of $S$, for any $\varepsilon>0$, there exists some $\delta>0$ and $S:B(x_0,\delta)\to B(y_0,\varepsilon)$ also $f(x,S(x))=0$ for any $x\in B(x_0,\delta)$. This $\varepsilon$ corresponds to $\epsilon_0$ in Lemma \ref{decomposition} and $\delta$ corresponds to $\delta$ and $\overline{l}$ in Lemma \ref{decomposition}.
\end{remark*}

\section{Uniformity estimates for decomposition parameters}
\label{sec:uniesti}
For $s_1>0$, let $\lambda_1$ and $b_1>0$ be defined by
\begin{align*}
\label{lambini}
\lambda_1:=\sqrt{\frac{\|yQ\|_2^2}{8E_0}}{s_1}^{-1},\quad E(P_{\lambda_1,b_1,0})=E_0.
\end{align*}
Note that such $b_1>0$ exists from \eqref{Eesti}. Moreover, let functions $\lambda_{\app}$ and $b_{\app}$ be defined by
\[
\lambda_{\app}(s):=\sqrt{\frac{\|yQ\|_2^2}{8E_0}}s^{-1},\quad b_{\app}(s):=s^{-1}.
\]

Let $u(t)$ be the solution for \eqref{NLS2} with an initial value
\begin{align}
u(t_1,x):=\frac{1}{{\lambda_1}^\frac{N}{2}}P\left(\frac{x}{\lambda_1}\right)e^{-i\frac{b_1}{4}\frac{|x|^2}{{\lambda_1}^2}}.
\end{align}
Then since $u$ satisfies the assumption of Lemma \ref{decomposition} in a neighbourhood of $t_1$, there exists a decomposition $(\tilde{\lambda}_{t_1},\tilde{b}_{t_1},\tilde{\gamma}_{t_1},\tilde{\varepsilon}_{t_1})$ such that $(\ref{mod})$ in a neighbourhood $I$ of $t_1$. The rescaled time $s_{t_1}$ is defined by
\[
s_{t_1}(t):=s_1-\int_t^{t_1}\frac{1}{\tilde{\lambda}_{t_1}(\tau)^2}d\tau.
\]
Moreover, let $I_{t_1}$ be the maximal interval such that a decomposition as $(\ref{mod})$ is obtained and we define 
\[
J_{s_1}:=s_{s_1}\left(I_{t_1}\right).
\]

Then, since $s_{t_1}:I_{t_1}\rightarrow J_{s_1}$ is strictly monotonically increasing, we can define inverse function ${s_{t_1}}^{-1}:J_{s_1}\rightarrow I_{t_1}$. Furthermore, we define
\begin{align*}
t_{t_1}&:=-\frac{\|yQ\|_2^2}{8E_0}{s_{t_1}}^{-1},& \lambda_{t_1}(s)&:=\tilde{\lambda}(t_{t_1}(s)),& b_{t_1}(s)&:=\tilde{b}(t_{t_1}(s)),\\
\gamma_{t_1}(s)&:=\tilde{\gamma}(t_{t_1}(s)),& \varepsilon_{t_1}(s,y)&:=\tilde{\varepsilon}(t_{t_1}(s),y).&&
\end{align*}
For the sake of clarity in notation, we often omit the subscript $t_1$. In particular, it should be noted that $u\in C((T_*,T^*),\Sigma^2(\mathbb{R}^N))$ and $|x|\nabla u\in C((T_*,T^*),L^2(\mathbb{R}^N))$. Additionally, let $s_0$ be sufficiently large, $s_1\geq s_0$, and
\[
s':=\max\left\{s_0,\inf J_{s_1}\right\}.
\]

Let $s_*$ be defined by
\begin{align}
\label{s*def}
s_*:=\inf\left\{\sigma\in(s',s_1]\ \middle|\ \text{(\ref{prebootstrap}) holds on }[\sigma,s_1]\right\},
\end{align}
where
\begin{align}
\label{prebootstrap}
\left\|\varepsilon(s)\right\|_{H^1}^2+b(s)^2\|y\varepsilon(s)\|_2^2<s^{-2K},\quad\left|\frac{\lambda(s)}{\lambda_{\text{app}}(s)}-1\right|+\left|\frac{b(s)}{b_{\text{app}}(s)}-1\right|<s^{-M}
\end{align}
with some
\[
0<M<2(\alpha-1).
\]

By direct calculation with \eqref{NLS2}, \eqref{mod}, and \eqref{Peq},
\begin{align}
\label{theorem:epsieq}
&i\frac{\partial \varepsilon}{\partial s}+\Delta \varepsilon-\varepsilon+f\left(P+\varepsilon\right)-f\left(P\right)\pm C_0\lambda^\alpha\left(g(P+\varepsilon)-g(P)\right)\mp\lambda^\alpha \frac{1}{|y|^{2\sigma}}\varepsilon+\theta\frac{|y|^2}{4}\varepsilon\\
&\hspace{20pt}-i\left(\frac{1}{\lambda}\frac{\partial \lambda}{\partial s}+b\right)\Lambda (P+\varepsilon)+\left(1-\frac{\partial \gamma}{\partial s}\right)(P+\varepsilon)\nonumber\\
&\hspace{40pt}+\left(\frac{\partial b}{\partial s}+b^2-\theta\right)\frac{|y|^2}{4}(P+\varepsilon)-\left(\frac{1}{\lambda}\frac{\partial \lambda}{\partial s}+b\right)b\frac{|y|^2}{2}(P+\varepsilon)\nonumber\\
&\hspace{60pt}=-\Psi\nonumber
\end{align}
holds in $J_{s_1}$.

\begin{lemma}
\label{Modesti-1}
For $s\in J_{s_1}$,
\[
|\Mod(s)|\lesssim \left|(\varepsilon,P)_2\right|+\lambda^\alpha\left\|\varepsilon e^{-ib\frac{|y|^2}{4}}\right\|_{H^1}+\left\|\varepsilon e^{-ib\frac{|y|^2}{4}}\right\|_{H^1}^2+(b^2+\lambda^\alpha)^{K+2}
\]
holds, where
\[
\Mod(s):=\left(\frac{1}{\lambda}\frac{\partial \lambda}{\partial s}+b,\frac{\partial b}{\partial s}+b^2,1-\frac{\partial \gamma}{\partial s}\right).
\]
\end{lemma}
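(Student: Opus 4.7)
The plan is to run the standard modulation analysis. Differentiating the three orthogonality conditions \eqref{orthocondi} in $s$, substituting the equation \eqref{theorem:epsieq} for $\partial_s\varepsilon$ into each, and isolating the coefficients of $\tfrac{1}{\lambda}\partial_s\lambda+b$, $\partial_s b+b^{2}-\theta$ and $1-\partial_s\gamma$, I expect to obtain a $3\times 3$ linear system
\[
M(s)\,\Mod(s)^{T}=R(s),
\]
whose matrix $M(s)$ consists of $L^{2}$-pairings of $\Lambda(P+\varepsilon)$, $\tfrac{|y|^{2}}{4}(P+\varepsilon)$ and $P+\varepsilon$ against the test functions $i\Lambda P$, $|y|^{2}P$, $i\rho$, and whose right-hand side $R(s)$ collects all remaining terms in \eqref{theorem:epsieq}.

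At leading order in $(\lambda,b)$ these entries reduce to pairings with $Q$, $\Lambda Q$, $\rho$ and $|y|^{2}Q$. The identity $(\Lambda Q,Q)_{2}=0$ together with the reality of $Q$ makes the mixing between the imaginary row arising from $(\cdot,i\Lambda P)_{2}$ and the two real rows arising from $(\cdot,|y|^{2}P)_{2}$ and $(\cdot,i\rho)_{2}$ vanish, so $M(s)$ is block-diagonal modulo $O(b^{2}+\lambda^{\alpha})$. The diagonal blocks are nondegenerate: the scaling row has coefficient $\|\Lambda Q\|_{2}^{2}$, and the $2\times 2$ real block is invertible thanks to $L_{+}\rho=|y|^{2}Q$ together with the nonvanishing of $\||y|Q\|_{2}^{2}$. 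Under the bootstrap assumption \eqref{prebootstrap} and for $s_{0}$ sufficiently large, $M(s)$ is a small perturbation of this block-diagonal matrix, hence invertible with uniformly bounded inverse, and the problem reduces to estimating the entries of $R(s)$.

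The residual $R(s)$ decomposes into four kinds of contributions. The source $-\Psi$, paired against $i\Lambda P$, $|y|^{2}P$, $i\rho\in\mathcal{Y}'$, yields the $(b^{2}+\lambda^{\alpha})^{K+2}$ term by \eqref{Psiesti} and the exponential localisation of the test functions (after absorbing the $\lambda^{\alpha}|\Mod|$ piece of \eqref{Psiesti} into the left-hand side). The nonlinear remainders $f(P+\varepsilon)-f(P)-df(P)\varepsilon$ and $C_{0}\lambda^{\alpha}[g(P+\varepsilon)-g(P)-dg(P)\varepsilon]$ are at least quadratic in $\varepsilon$ and give the $\|\varepsilon e^{-ib|y|^{2}/4}\|_{H^{1}}^{2}$ contribution. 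The linear pieces $df(P)\varepsilon$, paired with the test functions and rewritten via $L_{+}\Lambda Q=-2Q$, $L_{-}|x|^{2}Q=-4\Lambda Q$ and $L_{+}\rho=|y|^{2}Q$, collapse after use of \eqref{orthocondi} to a scalar multiple of $(\varepsilon,P)_{2}$. The remaining pieces, namely the singular potential $\mp\lambda^{\alpha}|y|^{-2\sigma}\varepsilon$ and the modulation cross-terms $\theta\tfrac{|y|^{2}}{4}\varepsilon$ and $(\tfrac{1}{\lambda}\partial_s\lambda+b)b\tfrac{|y|^{2}}{2}(P+\varepsilon)$, are each linear in $\varepsilon$ with an overall prefactor of size $\lambda^{\alpha}$ (or a modulation quantity to be reabsorbed), and produce the $\lambda^{\alpha}\|\varepsilon e^{-ib|y|^{2}/4}\|_{H^{1}}$ contribution.

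The main obstacle, and the reason for the conjugated norm $\|\varepsilon e^{-ib|y|^{2}/4}\|_{H^{1}}$ rather than the plain $\|\varepsilon\|_{H^{1}}$, is the interplay of the singular factor $|y|^{-2\sigma}$ with the $b|y|^{2}/4$ phase in \eqref{mod}. A naive Hardy estimate applied directly to $\varepsilon$ produces a commutator with $\nabla(b|y|^{2}/4)$ whose contribution is not bounded uniformly in $b$; writing instead $\varepsilon=e^{ib|y|^{2}/4}(\varepsilon e^{-ib|y|^{2}/4})$ and applying Hardy to the conjugate $\varepsilon e^{-ib|y|^{2}/4}$, whose gradient matches the physical gradient of $u$ in the original variables, yields the desired clean factor $\lambda^{\alpha}\|\varepsilon e^{-ib|y|^{2}/4}\|_{H^{1}}$. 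Propagating this conjugation consistently through all three rows of the system and verifying that the lower-order $b$-weighted cross-terms perturb $M(s)$ and $R(s)$ only at the expected order is the delicate technical part; once this is handled, inverting $M(s)$ and bounding $|R(s)|$ by the four quantities listed completes the argument.
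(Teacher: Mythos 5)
This is the same modulation argument the paper (following \cite{LMR,MIP}) uses: differentiate the orthogonality conditions \eqref{orthocondi}, substitute \eqref{theorem:epsieq}, invert the leading-order modulation matrix, and bound the residual exactly as you describe --- $\Psi$ via \eqref{Psiesti} with the exponentially localised test functions, the quadratic nonlinear remainders, $L_+\Lambda Q=-2Q$ together with \eqref{orthocondi} producing the $(\varepsilon,P)_2$ term, and the conjugation $\varepsilon e^{-ib|y|^2/4}$ (the paper's identity $\nabla\varepsilon=\nabla(\varepsilon e^{-ib|y|^2/4})e^{ib|y|^2/4}+ib\tfrac{y}{2}\varepsilon$) for the $\lambda^\alpha$-weighted linear terms. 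The only inaccuracy is in your description of the matrix: since the scaling term in \eqref{theorem:epsieq} carries an $i$ relative to the real profile, $(\Lambda Q,i\Lambda Q)_2=0$ and $\|\Lambda Q\|_2^2$ never appears; rather the $(\cdot,i\Lambda P)_2$ row isolates $\tfrac{\partial b}{\partial s}+b^2-\theta$ with coefficient $\tfrac14\|yQ\|_2^2$ (via $(|y|^2P,\Lambda P)_2=-\|yQ\|_2^2+O(\lambda^\alpha)$), the $(\cdot,|y|^2P)_2$ row isolates $\tfrac1\lambda\tfrac{\partial\lambda}{\partial s}+b$, and the $(\cdot,i\rho)_2$ row isolates $1-\tfrac{\partial\gamma}{\partial s}$ via $(Q,\rho)_2=\tfrac12\|yQ\|_2^2$, so the system is (anti-)triangular with nonzero diagonal and your invertibility conclusion still holds.
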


\begin{proof}
See \cite{LMR,MIP} for details of the proof.

According to the orthogonality properties \eqref{orthocondi}, we have
\[
0=\frac{d}{ds}\left(i\varepsilon,\Lambda P\right)_2=\left(i\frac{\partial \varepsilon}{\partial s},\Lambda P\right)_2+\left(i\varepsilon,\frac{\partial (\Lambda P)}{\partial s}\right)_2.
\]

By direct calculation and \eqref{Pdef}, we obtain
\[
\left|\left(i\varepsilon,\frac{\partial (\Lambda P)}{\partial s}\right)_2\right|\lesssim \left|\Mod\right|\|\varepsilon\|_2+\lambda^\alpha\|\varepsilon\|_2.
\]

From \eqref{theorem:epsieq}, we obtain
\begin{align*}
&\left(i\frac{\partial \varepsilon}{\partial s},\Lambda P\right)_2\\
&=\left(L_+\re\varepsilon+iL_-\im\varepsilon-\left(f\left(P+\varepsilon\right)-f\left(P\right)-df(Q)(\varepsilon)\right)\mp C_0\lambda^\alpha\left(g(P+\varepsilon)-g(P)\right)\pm\lambda^\alpha \frac{1}{|y|^{2\sigma}}\varepsilon-\theta\frac{|y|^2}{4}\varepsilon\right.\\
&\hspace{40pt}\left.+i\left(\frac{1}{\lambda}\frac{\partial \lambda}{\partial s}+b\right)\Lambda (P+\varepsilon)-\left(1-\frac{\partial \gamma}{\partial s}\right)(P+\varepsilon)-\left(\frac{\partial b}{\partial s}+b^2-\theta\right)\frac{|y|^2}{4}(P+\varepsilon)\right.\\
&\hspace{80pt}\left.+\left(\frac{1}{\lambda}\frac{\partial \lambda}{\partial s}+b\right)b\frac{|y|^2}{2}(P+\varepsilon)+\Psi,\Lambda P\right)_2.
\end{align*}
Noting that
\[
\nabla \varepsilon=\nabla\left(\varepsilon e^{-ib\frac{|y|^2}{4}}\right)e^{ib\frac{|y|^2}{4}}+ib\frac{y}{2}\varepsilon,
\]
we obtain
\[
\left\langle L_+\re\varepsilon,\Lambda P\right\rangle=-2(\varepsilon,P)_2+O\left(\lambda^\alpha\left\|\varepsilon e^{-ib\frac{|y|^2}{4}}\right\|_{H^1}\right).
\]
Therefore, since $\left(|y|^2P,\Lambda P\right)_2=-\|yQ\|_2^2+O(\lambda^\alpha)$, we obtain
\begin{align*}
\left(i\frac{\partial \varepsilon}{\partial s},\Lambda P\right)_2=&-\frac{1}{4}\|yQ\|\left(\frac{\partial b}{\partial s}+b^2-\theta\right)-2(\varepsilon,P)_2+O\left(\lambda^\alpha\left\|\varepsilon e^{-ib\frac{|y|^2}{4}}\right\|_{H^1}\right)+O\left(\left\|\varepsilon e^{-ib\frac{|y|^2}{4}}\right\|_{H^1}^2\right)\\
&\hspace{20pt}+O\left((b^2+\lambda^\alpha)^{K+2}\right)+o\left(\left|\Mod\right|\right).
\end{align*}

Similar calculations are performed for other orthogonal conditions in \eqref{orthocondi} to obtain the estimate of Lemma \ref{Modesti-1}.
\end{proof}

\begin{remark}
\label{Modesti-1rem}
The estimate in Lemma \ref{Modesti-1} is independent of the initial values $\lambda_1$, $b_1$, and $0$ of $\lambda$, $b$, and $\varepsilon$, respectively. Therefore, a similar estimate can be obtained if $\lambda$, $b$, and $\varepsilon$ is sufficiently small.
\end{remark}

From Lemma \ref{Modesti-1}, we obtain the following lemma. See \cite{LMR,MIP} for details of the proof.

\begin{corollary}
\label{Modesti-2}
For $s\in(s_*,s_1]$,
\[
|(\varepsilon(s),Q)|\lesssim s^{-(K+\alpha)},\quad |\Mod(s)|\lesssim s^{-(K+\alpha)},\quad \|e^{\epsilon'|y|}\Psi\|_{H^1}\lesssim s^{-(K+2\alpha)}
\]
hold.
\end{corollary}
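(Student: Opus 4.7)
The plan is to combine the bootstrap \eqref{prebootstrap} with Lemma \ref{Modesti-1}, conservation of mass for $u$, Proposition \ref{Pprop}, and the source bound \eqref{Psiesti}, closing the estimates via a Gronwall-type continuation argument.

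From \eqref{prebootstrap} and the definitions of $\lambda_{\text{app}}$, $b_{\text{app}}$ one has $\lambda(s)\approx b(s)\approx s^{-1}$, so $\lambda^\alpha\approx s^{-\alpha}$ and $b^2+\lambda^\alpha\approx s^{-\alpha}$ (using $1<\alpha<2$); also $\|\varepsilon\|_{H^1}+b\|y\varepsilon\|_2\lesssim s^{-K}$, hence $\|\varepsilon e^{-ib|y|^2/4}\|_{H^1}\lesssim s^{-K}$. Substituting into Lemma \ref{Modesti-1}, the whole problem collapses to controlling a single scalar:
\[
|\Mod(s)|\lesssim|(\varepsilon(s),P(s))_2|+s^{-(K+\alpha)},
\]
since for $K$ large both $\|\varepsilon\|_{H^1}^2\lesssim s^{-2K}$ and $(b^2+\lambda^\alpha)^{K+2}\lesssim s^{-(K+2)\alpha}$ are $\lesssim s^{-(K+\alpha)}$.

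For the inner product $(\varepsilon,P)_2$ I would use mass conservation. Since $\varepsilon(s_1)=0$ by construction of the initial datum, $\|u\|_2^2=\|P(s_1)\|_2^2$ holds on $(s_*,s_1]$, so
\[
(\varepsilon(s),P(s))_2=\tfrac{1}{2}\bigl(\|P(s_1)\|_2^2-\|P(s)\|_2^2\bigr)-\tfrac{1}{2}\|\varepsilon(s)\|_2^2,
\]
and the mass difference is bounded by integrating Proposition \ref{Pprop}. Feeding the result back into the display for $|\Mod|$ yields a self-referencing inequality of the form
\[
|\Mod(s)|\lesssim s^{-(K+\alpha)}+\int_s^{s_1}\tau^{-\alpha}|\Mod(\tau)|\,d\tau.
\]
Setting $F(s):=\sup_{\tau\in[s,s_1]}\tau^{K+\alpha}|\Mod(\tau)|$, the integral contributes $F(s)\,s^{-(\alpha-1)}$, so $F(s)\lesssim 1+F(s)\,s^{-(\alpha-1)}$; for $s\geq s_0$ sufficiently large this forces $F(s)\lesssim 1$, i.e.\ $|\Mod(s)|\lesssim s^{-(K+\alpha)}$. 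The bound on $|(\varepsilon,Q)|$ follows because $(\varepsilon,Q)_2=(\varepsilon,P)_2-(\varepsilon,P-Q)_2$ with $|(\varepsilon,P-Q)_2|\leq\|\varepsilon\|_2\|P-Q\|_2\lesssim s^{-(K+\alpha)}$, and \eqref{Psiesti} immediately upgrades the $\Mod$ bound to $\|e^{\epsilon'|y|}\Psi\|_{H^1}\lesssim\lambda^\alpha|\Mod|+(b^2+\lambda^\alpha)^{K+2}\lesssim s^{-(K+2\alpha)}$.

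The principal difficulty is the Gronwall closure: a naive use of mass conservation only gives $|(\varepsilon,P)_2|=O(\lambda^{2\alpha})=O(s^{-2\alpha})$, which is inadequate for $K$ large. One really has to recycle the tentative bound on $|\Mod|$ through Proposition \ref{Pprop}, and it is the hypothesis $\alpha>1$ (equivalently $M<2(\alpha-1)$ in \eqref{prebootstrap}) that ensures $\int_s^{s_1}\tau^{-\alpha}\tau^{-(K+\alpha)}d\tau\lesssim s^{-(K+\alpha)}$, making the closure possible.
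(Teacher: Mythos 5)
Your argument is correct and is essentially the paper's own proof: both reduce $|\Mod|$ to $|(\varepsilon,P)_2|$ via Lemma \ref{Modesti-1} and the bootstrap bounds \eqref{prebootstrap}, recover $(\varepsilon,P)_2$ from mass conservation together with $\varepsilon(s_1)=0$ and the $\frac{d}{ds}\|P_{\lambda,b,\gamma}\|_2^2$ bound of Proposition \ref{Pprop}, and close using the gain $s^{-(\alpha-1)}$ coming from $\alpha>1$, with \eqref{Psiesti} then giving the $\Psi$ estimate. The only cosmetic difference is that the paper closes the loop by a continuity argument on the quantity $|(\varepsilon,P)_2|$ (introducing $s_{**}$ and improving $\tau^{-(K+\alpha)}$ to $\tau^{-(K+2\alpha-1)}$), whereas you absorb the self-referencing term through a Gronwall-type supremum bound on $|\Mod|$.
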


\begin{proof}
Let
\[
s_{**}:=\inf\left\{\ s\in[s_*,s_1]\ \middle|\ \left|(\varepsilon(\tau),P)_2\right|<\tau^{-(K+\alpha)}\ \mbox{holds on}\ [s,s_1].\ \right\}.
\]
Then from Lemma \ref{Modesti-1}, it is clearly
\[
|\Mod(s)|\lesssim s^{-(K+\alpha)}
\]
on $(s_{**},s_1]$. Therefore, from \eqref{Psiesti}, we obtain
\[
\|e^{\epsilon'|y|}\Psi\|_{H^1}\lesssim s^{-(K+2\alpha)}.
\]
Accordingly, from Proposition \ref{Pprop}, we obtain
\[
\left|(\varepsilon(\tau),P)_2\right|\lesssim s^{-(K+2\alpha-1)}.
\]
Namely, $s_*=s_{**}$ for sufficiently large $s_0$. Consequently, Corollary \ref{Modesti-2} holds.
\end{proof}

Let $m>0$ be sufficiently large and define
\begin{align*}
S(s,\varepsilon)&:=\frac{1}{\lambda^m}\biggl(\frac{1}{2}\left\|\varepsilon\right\|_{H^1}^2+b^2\left\|y\varepsilon\right\|_2^2-\int_{\mathbb{R}^N}\left(F(P+\varepsilon)-F(P)-dF(P)(\varepsilon)\right)dy\\
&\hspace{80pt}\mp C_0\lambda^\alpha\int_{\mathbb{R}^N}\left(G(P+\varepsilon)-G(P)-dG(P)(\varepsilon)\right)dy\pm\frac{1}{2}\lambda^\alpha\left\|y^{-\sigma}\varepsilon\right\|_2^2\biggr).
\end{align*}

\begin{lemma}[Estimates of $S$]
\label{Sesti}
For $s\in(s_*,s_1]$, 
\[
\frac{1}{\lambda^m}\left(\|\varepsilon\|_{H^1}^2+b^2\left\|y\varepsilon\right\|_2^2+O(s^{-2(K+\alpha)})\right)\lesssim S(s,\varepsilon)\lesssim \frac{1}{\lambda^m}\left(\|\varepsilon\|_{H^1}^2+b^2\left\|y\varepsilon\right\|_2^2\right)
\]
hold. Moreover,
\[
\frac{d}{ds}S(s,\varepsilon(s))\gtrsim \frac{b}{\lambda^m}\left(\|\varepsilon\|_{H^1}^2+b^2\left\|y\varepsilon\right\|_2^2+O(s^{-(2K+2\alpha-1)})\right)
\]
holds for $s\in(s_*,s_1]$.
\end{lemma}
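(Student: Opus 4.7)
The plan for the coercivity bounds is to Taylor-expand both nonlinear integrands,
\[
\int\bigl(F(P+\varepsilon)-F(P)-dF(P)(\varepsilon)\bigr)dy = \tfrac12\int d^2F(P)(\varepsilon,\varepsilon)\,dy + O(\|\varepsilon\|_{H^1}^{2+4/N}),
\]
and similarly for $G$, with Sobolev embedding controlling the remainder. Since $\|P-Q\|_{H^1}\lesssim\lambda^\alpha$ by \eqref{Pdef}, replacing $P$ by $Q$ in the quadratic forms costs only $O(\lambda^\alpha\|\varepsilon\|_{H^1}^2)$, and combining with the $\tfrac12\|\varepsilon\|_{H^1}^2$ in $S$ reconstructs $\tfrac12\langle L_+\re\varepsilon,\re\varepsilon\rangle+\tfrac12\langle L_-\im\varepsilon,\im\varepsilon\rangle$; the $\lambda^\alpha|y|^{-2\sigma}|\varepsilon|^2$ term is absorbed in the same error via Hardy (valid since $\sigma<1$). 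For the upper bound this already suffices after adding $b^2\|y\varepsilon\|_2^2$ and multiplying by $\lambda^{-m}$. For the lower bound I would invoke \eqref{Lcoer}: the three orthogonality conditions \eqref{orthocondi} kill the $(\re\varepsilon,|y|^2Q)$ and $(\im\varepsilon,\rho)$ obstructions up to $\lambda^\alpha$-errors from $P-Q$, while the residual obstruction $(\re\varepsilon,Q)^2$ is directly controlled by $|(\varepsilon,Q)|\lesssim s^{-(K+\alpha)}$ from Corollary \ref{Modesti-2}, producing the announced $O(s^{-2(K+\alpha)})$ remainder.

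For the monotonicity estimate, the chain rule splits $\tfrac{d}{ds}S$ into a prefactor piece coming from $\partial_s(\lambda^{-m})$ and a bracket piece coming from the $s$-dependence of $b$, $\lambda^\alpha$, $P$, and $\varepsilon$. Since $\tfrac{1}{\lambda}\partial_s\lambda=-b+O(|\Mod|)$, the prefactor piece produces the desired main term $\tfrac{mb}{\lambda^m}(\|\varepsilon\|_{H^1}^2+b^2\|y\varepsilon\|_2^2)$ up to a modulation error. In the bracket piece I would substitute \eqref{theorem:epsieq} for $\partial_s\varepsilon$; by the design of $S$ as a modified energy, its Fr\'echet derivative in $\varepsilon$ matches the nonlinear part of that equation, so the linear Schr\"odinger and nonlinear potential terms cancel, leaving only a remainder driven by (a) the source $\Psi$, (b) the four $\Mod$-terms of \eqref{theorem:epsieq} multiplying $\Lambda(P+\varepsilon)$, $(P+\varepsilon)$, $\tfrac{|y|^2}{4}(P+\varepsilon)$, and $b\tfrac{|y|^2}{2}(P+\varepsilon)$, and (c) the explicit $\partial_s(b^2)$ and $\partial_s(\lambda^\alpha)$ variations of the coefficients of $S$.

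Each remainder is then absorbed via Corollary \ref{Modesti-2}: the $\Psi$-contribution, bounded using $\|e^{\epsilon'|y|}\Psi\|_{H^1}\lesssim s^{-(K+2\alpha)}$ combined with the bootstrap $\|\varepsilon\|_{H^1}\lesssim s^{-K}$ of \eqref{prebootstrap}, becomes $\tfrac{1}{\lambda^m}s^{-(2K+2\alpha)}\approx \tfrac{b}{\lambda^m}s^{-(2K+2\alpha-1)}$ since $b\approx s^{-1}$; the $\Mod$-contributions satisfy $|\Mod|\lesssim s^{-(K+\alpha)}$ and produce errors of the same size; and the $\lambda^\alpha$-weighted errors (singular potential, subcritical power, and $P-Q$ corrections) take the form $\tfrac{\lambda^\alpha}{\lambda^m}\|\varepsilon\|_{H^1}^2$, dominated by $\tfrac{mb}{\lambda^m}\|\varepsilon\|_{H^1}^2$ once $m$ is large and $s_0$ sufficiently large, since $\lambda^\alpha/b\to0$ as $s\to\infty$. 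The hard part will be the bookkeeping in the bracket piece: identifying which cross-terms cancel by the Hamiltonian structure and which survive, and in particular organising the interaction between the $\Mod$-driven terms and the weighted factor $b^2\|y\varepsilon\|_2^2$ so that the only net contribution outside the main coercive term is of the form $\tfrac{|\Mod|}{\lambda^m}(\|\varepsilon\|_{H^1}^2+b^2\|y\varepsilon\|_2^2)$, which is then absorbed as above.
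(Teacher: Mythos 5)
Your proposal is correct and follows essentially the same route as the paper, which itself gives no details here but defers verbatim to the Lyapunov-functional argument of \cite{LMR,MIP}: Taylor expansion to reconstruct $\langle L_+\re\varepsilon,\re\varepsilon\rangle+\langle L_-\im\varepsilon,\im\varepsilon\rangle$, coercivity \eqref{Lcoer} with the orthogonality conditions \eqref{orthocondi} and the bound on $(\varepsilon,Q)_2$ from Corollary \ref{Modesti-2} supplying the $O(s^{-2(K+\alpha)})$ remainder, and the $\partial_s(\lambda^{-m})$ prefactor giving the sign-definite main term for $m$ large. The only point to make explicit in the bookkeeping is that the $\Mod$-driven pairings against profile directions must be reduced, via self-adjointness and the identities $L_+\Lambda Q=-2Q$, $L_-|x|^2Q=-4\Lambda Q$, to $(\varepsilon,Q)_2$-type quantities rather than crudely to $|\Mod|\,\|\varepsilon\|_{H^1}$, which would be too large; this is exactly the cancellation you flag as ``the hard part.''
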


\begin{proof}
The proof is the same as for \cite{LMR,MIP}.
\end{proof}

We use the estimates obtained in Lemma \ref{Sesti} and the bootstrap to establish the estimates of the parameters. Namely, we confirm \eqref{prebootstrap} on $[s_0,s_1]$.

\begin{lemma}[Re-estimation]
\label{rebootstrap}
For $s\in(s_*,s_1]$, 
\begin{align}
\label{reepsiesti}
&\left\|\varepsilon(s)\right\|_{H^1}^2+b(s)^2\left\|y\varepsilon(s)\right\|_2^2\lesssim s^{-(2K+\alpha)},\\
\label{reesti}
&\left|\frac{\lambda(s)}{\lambda_{\text{app}}(s)}-1\right|+\left|\frac{b(s)}{b_{\text{app}}(s)}-1\right|\lesssim s^{-2(\alpha-1)}.
\end{align}
\end{lemma}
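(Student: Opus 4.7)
The proof plan has two parts, corresponding to the two estimates \eqref{reepsiesti} and \eqref{reesti}.

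\textbf{Step 1 (Estimate for $\varepsilon$).} Since the initial data is chosen so that $\varepsilon(s_1)\equiv 0$, we have $S(s_1,\varepsilon(s_1))=0$. The plan is to integrate the monotonicity formula from Lemma \ref{Sesti},
\[
\frac{d}{ds}S(s,\varepsilon(s))\gtrsim \frac{b}{\lambda^m}\bigl(\|\varepsilon\|_{H^1}^2+b^2\|y\varepsilon\|_2^2\bigr)-C\frac{b}{\lambda^m}s^{-(2K+2\alpha-1)},
\]
from $s$ to $s_1$. Dropping the nonnegative quadratic integral, one obtains $S(s)\lesssim \int_s^{s_1}\frac{b(\tau)}{\lambda(\tau)^m}\tau^{-(2K+2\alpha-1)}\,d\tau$. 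Under \eqref{prebootstrap} we have $b/\lambda^m\approx \tau^{m-1}$ (up to the constant $\sqrt{8E_0/\|yQ\|_2^2}$), so choosing $m$ large but strictly smaller than $2K+2\alpha-1$, the integral is controlled by its lower endpoint and is $\lesssim s^{m-2K-2\alpha+1}$. Multiplying by $\lambda^m\approx s^{-m}$ (up to the same constant) and combining with the lower bound of $S$ in Lemma \ref{Sesti} yields $\|\varepsilon\|_{H^1}^2+b^2\|y\varepsilon\|_2^2\lesssim s^{-(2K+2\alpha-1)}+s^{-2(K+\alpha)}$. Since $\alpha>1$ implies $2\alpha-1>\alpha$, both terms are bounded by $s^{-(2K+\alpha)}$, establishing \eqref{reepsiesti}.

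\textbf{Step 2 (Parameter estimates).} The plan is to combine three ingredients: Corollary \ref{Modesti-2} (which gives $|\Mod(s)|\lesssim s^{-(K+\alpha)}$, hence $\frac{1}{\lambda}\frac{\partial\lambda}{\partial s}+b=O(s^{-(K+\alpha)})$), energy conservation $E(u)\equiv E_0$ together with \eqref{Eesti}, and the relation $\beta_{0,0}=0$ (so that the leading terms of $\theta$ are $O(\lambda^{2\alpha})+O(b^2\lambda^{\alpha})$). Using the $\varepsilon$ bound obtained in Step 1 to control the deviation of $E(u)$ from $E(P_{\lambda,b,\gamma})$, \eqref{Eesti} yields
\[
\Bigl|8E_0-\|yQ\|_2^2\,\frac{b^2}{\lambda^2}\Bigr|\lesssim s^{2-2\alpha},
\]
so $b/\lambda=\sqrt{8E_0/\|yQ\|_2^2}\,(1+O(s^{2-2\alpha}))$. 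Substituting into the modulation relation and setting $\mu:=1/\lambda$ transforms the equation for $\lambda$ into $\frac{d\mu}{ds}=\sqrt{8E_0/\|yQ\|_2^2}+O(s^{2-2\alpha})+O(s^{2-K-\alpha})$. Integrating from $s$ to $s_1$ and using $\mu(s_1)=1/\lambda_1=s_1\sqrt{8E_0/\|yQ\|_2^2}$ (which holds by the very definition of $\lambda_1$) gives $\mu(s)=s\sqrt{8E_0/\|yQ\|_2^2}+O(s^{3-2\alpha})$, and hence $|\lambda/\lambda_{\app}-1|\lesssim s^{-2(\alpha-1)}$. The bound $|b/b_{\app}-1|\lesssim s^{-2(\alpha-1)}$ then follows by feeding this back into the energy relation $b/\lambda=\sqrt{8E_0/\|yQ\|_2^2}(1+O(s^{2-2\alpha}))$.

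\textbf{Main obstacle.} The harder step is Step 2, where several heterogeneous error terms must be reconciled: the modulation error $O(s^{-(K+\alpha)})$, the algebraic error $O(s^{2-2\alpha})$ coming from \eqref{Eesti}, the perturbation $\theta=O(\lambda^{2\alpha})$, and the quadratic $\varepsilon$-correction to the energy. All of these must be dominated by $s^{-2(\alpha-1)}$ after integration, which requires $K$ to be chosen sufficiently large and relies critically on $\beta_{0,0}=0$ (so that $\lambda_{\app}(s)\sim1/s$ with the constant fixed by $E_0$) and on the matching condition $\lambda_1=\lambda_{\app}(s_1)$ built into the initial data. Step 1 is comparatively routine, with the sole delicate point being the choice of $m$ so that the monotonicity integral is dominated by the endpoint $s$ rather than $s_1$.
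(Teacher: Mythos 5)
Your proposal is correct and rests on the same two pillars as the paper's proof: the coercivity and monotonicity of $S$ from Lemma \ref{Sesti} for \eqref{reepsiesti}, and the energy relation \eqref{Eesti} combined with the modulation estimates of Corollary \ref{Modesti-2} and the matching condition $\lambda_1=\lambda_{\app}(s_1)$ for \eqref{reesti}. The only real divergence is in how the monotonicity is exploited. The paper argues by contradiction, introducing two thresholds $s_\dagger\le s_\ddagger$ on which the quadratic quantity stays above $\tau^{-(2K+\alpha)}$, so that the error $O(\tau^{-(2K+2\alpha-1)})$ is dominated, $S$ is increasing there, and comparing $S(s_\dagger)\le S(s_\ddagger)$ through the two-sided bounds of Lemma \ref{Sesti} forces $C_1(C_\dagger-1)\le 2C_2$, a contradiction for $C_\dagger$ large. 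You instead integrate the differential inequality directly from $s$ to $s_1$, using $\varepsilon(s_1)=0$ (hence $S(s_1)=0$, which does hold by uniqueness in Lemma \ref{decomposition}) and the constraint $m<2K+2\alpha-1$ so that the error integral is controlled by its lower endpoint; this is slightly cleaner but leans on the specific endpoint data $\varepsilon(s_1)=0$, whereas the paper's comparison argument uses only the bootstrap hypothesis \eqref{prebootstrap} (cf.\ Remark \ref{Modesti-1rem}). For \eqref{reesti}, your use of energy conservation of $u$ together with the Step-1 bound on $\varepsilon$ to control $|E(P_{\lambda,b,\gamma})-E_0|$ is an acceptable substitute for the paper's route via the bound on $\frac{d}{ds}E(P_{\lambda,b,\gamma})$ in Proposition \ref{Pprop}; in either case the resulting error is $O(s^{-(K+c)})$ with $K$ large, hence negligible against $s^{-2(\alpha-1)}$, and the remaining algebra (dividing \eqref{Eesti} by $\lambda^2\approx s^{-2}$, integrating the modulation equation for $1/\lambda$, and feeding back into $b/\lambda$) matches the paper's.
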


\begin{proof}
See \cite{LMR,MIP} for details of the proof.

We prove \eqref{reepsiesti} by contradiction. Let $C_\dagger>0$ be sufficiently large and define
\[
s_\dagger:=\inf\left\{\sigma\in(s_*,s_1]\ \middle|\ \left\|\varepsilon(\tau)\right\|_{H^1}^2+b(\tau)^2\left\||y|\varepsilon(\tau)\right\|_2^2\leq C_\dagger \tau^{-(2K+\alpha)}\ (\tau\in[\sigma,s_1])\right\}.
\]
Then $s_\dagger<s_1$ holds. Here, we assume that $s_\dagger>s_*$ and define
\[
s_\ddagger:=\sup\left\{\sigma\in(s_*,s_1]\ \big|\ \left\|\varepsilon(\tau)\right\|_{H^1}^2+b(\tau)^2\left\||y|\varepsilon(\tau)\right\|_2^2\geq\tau^{-(2K+\alpha)}\ (\tau\in[s_\dagger,\sigma])\right\}.
\]
Then we obtain $s_\ddagger>s_\dagger$. Since $s\mapsto S(s,\varepsilon(s))$ is increasing on $[s_\dagger,s_\ddagger]$, we obtain
\[
C_1(C_\dagger-1)\leq 2C_2.
\]
It is a contradiction. Namely, $s_*=s_\dagger$.

From Proposition \ref{Pprop} and Lemma \ref{Modesti-2}, we obtain
\[
\left|E(P_{\lambda,b,\gamma})-E_0\right|\lesssim s^{-(K+\alpha-1)}\quad\text{and}\quad \left|b^2\|yQ\|_2^2-8\lambda^2E_0\right|\lesssim s^{-2\alpha}.
\]
Therefore,
\[
\left|\frac{\partial}{\partial s}\left(\sqrt{\frac{\|yQ\|_2^2}{8E_0}}\frac{1}{\lambda}-s\right)\right|\lesssim s^{-(2\alpha-1)},\quad\text{i.e., }\left|\frac{\lambda_{\mathrm{app}}(s)}{\lambda(s)}-1\right|\lesssim s^{-(2\alpha-1)}
\]
holds. Consequently, we obtain \eqref{reesti}.
\end{proof}

Furthermore, from Lemma \ref{rebootstrap} and \eqref{s*def} we obtain the following corollary:

\begin{corollary}
\label{s'=s_0}
If $s_0$ is sufficiently large, then $s_*=s'=s_0$ for any $s_1>s_0$.
\end{corollary}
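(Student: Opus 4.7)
The plan is to close the bootstrap argument by exploiting the strict improvement that Lemma \ref{rebootstrap} gives over the a priori bootstrap assumption \eqref{prebootstrap}. Since $\alpha>0$ and $M$ in \eqref{prebootstrap} satisfies $0<M<2(\alpha-1)$, the exponents in the re-estimations \eqref{reepsiesti} and \eqref{reesti} beat those in \eqref{prebootstrap}; for $s_0$ large enough, the implicit constants get absorbed and \eqref{prebootstrap} holds with strict inequality on all of $(s_*,s_1]$.

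With this in hand, I would first prove $s_*=s'$ by a standard open--closed continuity argument. If instead $s_*>s'$, then since by Lemma \ref{decomposition} the parameters $\lambda,b,\tilde\gamma$ (hence $\varepsilon$) are $C^1$ in $s$ on $J_{s_1}$, the strict version of \eqref{prebootstrap} that holds on $(s_*,s_1]$ extends by continuity to some left neighbourhood $[s_*-\eta,s_1]$, contradicting the definition of $s_*$ as the infimum in \eqref{s*def}.

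Next I would show $s'=s_0$, that is, $\inf J_{s_1}\le s_0$, again by contradiction. Assume $\inf J_{s_1}>s_0$, so that $s'=s_*=\inf J_{s_1}$. At $s=s_*$ the re-estimations give $\|\varepsilon(s_*)\|_{H^1}\ll 1$ while $\lambda(s_*)$ is comparable to $\lambda_{\app}(s_*)$, which is less than $\overline{l}$. Translating back to the physical time $t_{t_1}(s_*)$, this means $\tilde{\lambda}(t_{t_1}(s_*))^{N/2}u(t_{t_1}(s_*),\tilde{\lambda}(t_{t_1}(s_*))y)e^{i\tilde{\gamma}(t_{t_1}(s_*))}$ lies within $\delta$ of $Q$ in $H^1$. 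Applying Lemma \ref{decomposition} at this base point (with the ``for any $\epsilon_0$'' formulation) produces a decomposition on an open two-sided neighbourhood of $t_{t_1}(s_*)$; the uniqueness clause forces it to agree with the existing decomposition on $I_{t_1}$, giving a strict downward extension of $I_{t_1}$, and hence of $J_{s_1}$, contradicting the maximality of $I_{t_1}$.

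The main obstacle is this last step: guaranteeing that the closeness-to-$Q$ hypothesis of Lemma \ref{decomposition} holds at $s=s_*$ \emph{with a margin}, rather than merely in a limiting sense. This is exactly where the strict improvement from the first paragraph is essential — without it, the $\delta$-threshold of the decomposition lemma could be saturated precisely at $s_*$, blocking the extension. One also has to note that backward existence of the $H^1$-solution $u$ past $t_{t_1}(s_*)$ is automatic: the strict bootstrap controls $\|\nabla u\|_2$ in a neighbourhood of $t_{t_1}(s_*)$, so the blow-up alternative rules out $T_*=t_{t_1}(s_*)$.
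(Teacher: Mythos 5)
Your proof is correct and follows essentially the route the paper intends: the paper gives no explicit argument for this corollary, asserting only that it follows from Lemma \ref{rebootstrap} and the definition \eqref{s*def}, and your two steps --- the strict improvement of \eqref{prebootstrap} by \eqref{reepsiesti}--\eqref{reesti} (using $\alpha>0$ and $M<2(\alpha-1)$) to force $s_*=s'$ by continuity, followed by the extension of the decomposition past $\inf J_{s_1}$ via the $\delta$-margin, uniqueness in Lemma \ref{decomposition}, and the blow-up alternative --- are exactly the standard way to close this bootstrap.
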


Finally, we rewrite the estimates for $s$ in Lemma \ref{rebootstrap} into estimates for $t$.

\begin{lemma}
\label{interval}
Let $s_0$ be sufficiently large. Then there exists $t_0<0$ such that 
\[
[t_0,t_1]\subset {s_{t_1}}^{-1}([s_0,s_1]),\quad \left|\mathcal{C}s_{t_1}(t)^{-1}-|t|\right|\lesssim |t|^{M+1}\quad (t\in [t_0,t_1])
\]
hold for all $t_1\in(t_0,0)$, where $\mathcal{C}:=\frac{\|yQ\|_2^2}{8E_0}$.
\end{lemma}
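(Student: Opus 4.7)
The plan is to leverage the output of Lemma \ref{rebootstrap} together with Corollary \ref{s'=s_0}: the latter makes the bootstrap bound \eqref{reesti} available on the whole of $[s_0,s_1]$, which, since $\lambda_{\app}(s)^2=\mathcal{C}/s^2$, upgrades to
\[
\lambda(s)^2=\frac{\mathcal{C}}{s^2}\bigl(1+O(s^{-2(\alpha-1)})\bigr)=\frac{\mathcal{C}}{s^2}+O(s^{-2\alpha}).
\]
Because $\alpha>1$, the error $s^{-2\alpha}$ is integrable at infinity, which is precisely what is needed to compare the rescaled time $s_{t_1}^{-1}(s)$ with the model time $-\mathcal{C}/s$.

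First I would differentiate $\phi(s):=\mathcal{C}/s-|s_{t_1}^{-1}(s)|$. Since $\frac{dt}{ds}=\lambda_{t_1}(s)^2$ by definition of the rescaled time, and since $|t|=-t$ on $(T_*,0)$, one gets $\phi'(s)=\lambda(s)^2-\mathcal{C}/s^2=O(s^{-2\alpha})$. Integrating from $s\in[s_0,s_1]$ up to $s_1$ then yields
\[
\phi(s)=\phi(s_1)+O(s^{-(2\alpha-1)}).
\]
The boundary term $\phi(s_1)=\mathcal{C}/s_1-|t_1|$ vanishes under the natural pairing $|t_1|=\mathcal{C}/s_1$, which is the identification that makes $\tilde{\lambda}(t_1)=\lambda_1=\lambda_{\app}(s_1)$ consistent with the model. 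This produces the key bound $\bigl|\mathcal{C}s_{t_1}(t)^{-1}-|t|\bigr|\lesssim s^{-(2\alpha-1)}$ on $[s_0,s_1]$, uniformly in $s_1$.

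Next I would convert this $s$-estimate into the desired $|t|$-estimate. The bound just obtained forces $s\approx \mathcal{C}/|t|$, so $s^{-(2\alpha-1)}\lesssim |t|^{2\alpha-1}=|t|^{M+1}\cdot|t|^{2(\alpha-1)-M}$; the exponent $2(\alpha-1)-M$ is strictly positive by the choice $M<2(\alpha-1)$, hence once $|t|$ is small enough the extra factor is $\leq 1$ and the second claim follows. For the inclusion it suffices to choose, say, $t_0:=-\mathcal{C}/(2s_0)$: applying the same estimate at $s=s_0$ gives $s_{t_1}^{-1}(s_0)\approx -\mathcal{C}/s_0<t_0$ uniformly in $t_1\in(t_0,0)$, so $[t_0,t_1]\subset[s_{t_1}^{-1}(s_0),t_1]=s_{t_1}^{-1}([s_0,s_1])$.

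The main obstacle is really only a bookkeeping issue: handling the pairing $(t_1,s_1)$ consistently, so that $\phi(s_1)$ either vanishes or is already of the desired order and does not propagate as an unknown constant through the integration. Once that is in place, the whole argument reduces to the single integrability fact $\int^{\infty}s^{-2\alpha}\,ds<\infty$, which is exactly where the hypothesis $\alpha>1$ enters (and which explains why the corresponding statement would fail for $\alpha\leq 1$, in line with the heuristic discussion at the end of the comments on the main results).
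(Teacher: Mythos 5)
Your argument is correct and is essentially the standard one (the paper itself defers this proof to \cite{MP}, where the same ODE-comparison is carried out): you integrate $\frac{dt}{ds}=\lambda(s)^2=\mathcal{C}s^{-2}+O(s^{-2\alpha})$ from $s$ to $s_1$, kill the boundary term via the identification $t_1=-\mathcal{C}/s_1$ used in Section 4, and convert to the $t$-variable using $s\approx\mathcal{C}/|t|$ together with $2\alpha-1>M+1$. The remaining points --- invoking Corollary \ref{s'=s_0} so that the bounds of Lemma \ref{rebootstrap} are available on all of $[s_0,s_1]$ uniformly in $s_1$, and choosing $t_0=-\mathcal{C}/(2s_0)$ for the inclusion --- are also handled correctly.
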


Consequently, combining Lemma \ref{rebootstrap} and Lemma \ref{interval} the following lemma. See \cite{MP} for the proofs.

\begin{lemma}[Conversion of estimates]
\label{uniesti}
For any $t_1\in(t_0,0)$ and $t\in[t_0,t_1]$, 
\begin{align*}
\tilde{\lambda}_{t_1}(t)&=\sqrt{\frac{8E_0}{\|yQ\|_2^2}}|t|\left(1+\epsilon_{\tilde{\lambda},t_1}(t)\right),&\tilde{b}_{t_1}(t)&=\frac{8E_0}{\|yQ\|_2^2}|t|\left(1+\epsilon_{\tilde{b},t_1}(t)\right),\\
\|\tilde{\varepsilon}_{t_1}(t)\|_{H^1}&\lesssim |t|^{K+\frac{\alpha}{2}},&\|y\tilde{\varepsilon}_{t_1}(t)\|_{2}&\lesssim |t|^{K+\frac{\alpha}{2}-1}
\end{align*}
holds for some functions $\epsilon_{\tilde{\lambda},t_1}$ and $\epsilon_{\tilde{b},t_1}$. Furthermore,
\[
\sup_{t_1\in[t,0)}\left|\epsilon_{\tilde{\lambda},t_1}(t)\right|\lesssim |t|^M,\quad \sup_{t_1\in[t,0)}\left|\epsilon_{\tilde{b},t_1}(t)\right|\lesssim |t|^M.
\]
\end{lemma}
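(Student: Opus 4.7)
The strategy is a straightforward change of variables from the rescaled time $s$ back to the physical time $t$, using Lemma~\ref{interval} as the bridge and Lemma~\ref{rebootstrap} (now available on all of $[s_0,s_1]$ by Corollary~\ref{s'=s_0}) as the source of estimates. First I would fix $t_1\in(t_0,0)$ and $t\in[t_0,t_1]$ and set $s:=s_{t_1}(t)$; since $s\in[s_0,s_1]$, all conclusions of Lemma~\ref{rebootstrap} hold at $s$. Rewriting $|\mathcal{C}s^{-1}-|t||\lesssim|t|^{M+1}$ gives $s^{-1}=\mathcal{C}^{-1}|t|\bigl(1+O(|t|^{M})\bigr)$, which is the key substitution rule used throughout.

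For the scaling parameter, I combine $\lambda(s)=\lambda_{\mathrm{app}}(s)\bigl(1+O(s^{-2(\alpha-1)})\bigr)$ with $\lambda_{\mathrm{app}}(s)=\sqrt{\mathcal{C}}\,s^{-1}$; using $\mathcal{C}^{-1/2}=\sqrt{8E_0/\|yQ\|_2^2}$ and absorbing $s^{-2(\alpha-1)}\lesssim|t|^{2(\alpha-1)}$ into $|t|^{M}$ (legitimate since the bootstrap imposes $M<2(\alpha-1)$), we land on $\tilde{\lambda}_{t_1}(t)=\sqrt{8E_0/\|yQ\|_2^2}\,|t|\bigl(1+\epsilon_{\tilde{\lambda},t_1}(t)\bigr)$ with $|\epsilon_{\tilde{\lambda},t_1}(t)|\lesssim|t|^{M}$. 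An identical calculation, starting from $b(s)=s^{-1}\bigl(1+O(s^{-2(\alpha-1)})\bigr)$, yields $\tilde{b}_{t_1}(t)=(8E_0/\|yQ\|_2^2)\,|t|\bigl(1+\epsilon_{\tilde{b},t_1}(t)\bigr)$ with $|\epsilon_{\tilde{b},t_1}(t)|\lesssim|t|^{M}$.

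For the error term $\tilde{\varepsilon}$: by definition $\tilde{\varepsilon}_{t_1}(t,y)=\varepsilon_{t_1}(s,y)$, so no spatial rescaling is involved and both the $H^1$ and weighted $L^2$ norms transfer without any Jacobian factor. Hence $\|\tilde{\varepsilon}(t)\|_{H^1}=\|\varepsilon(s)\|_{H^1}\lesssim s^{-(K+\alpha/2)}\lesssim|t|^{K+\alpha/2}$. For the weighted norm, dividing $b(s)^{2}\|y\varepsilon(s)\|_2^2\lesssim s^{-(2K+\alpha)}$ by $b(s)^{2}\approx s^{-2}$ yields $\|y\varepsilon(s)\|_2\lesssim s^{-(K+\alpha/2-1)}\lesssim|t|^{K+\alpha/2-1}$, as required.

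Finally, for the uniformity in $t_1$: the implied constants in Lemmas~\ref{rebootstrap} and~\ref{interval} depend only on the fixed data ($E_0$, $Q$, $\alpha$, $K$, $M$) and not on $s_1$ or $t_1$; the pointwise-in-$t$ bound $|\epsilon_{\tilde{\lambda},t_1}(t)|\lesssim|t|^{M}$ therefore holds with a constant independent of $t_1\in[t,0)$, yielding the claimed uniform supremum bound (and similarly for $\epsilon_{\tilde{b},t_1}$). I expect no deep obstacle in this lemma, since the heavy lifting has already been done in Lemma~\ref{rebootstrap}. The one genuinely delicate point is the bookkeeping of error exponents: the intrinsic approximation error $O(s^{-2(\alpha-1)})$ must be controlled against the change-of-variables error $O(|t|^{M})$, and the fact that these combine cleanly into a single $O(|t|^{M})$ remainder relies in an essential way on the bootstrap hypothesis $M<2(\alpha-1)$.
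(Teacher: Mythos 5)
Your proposal is correct and follows exactly the route the paper indicates: it obtains Lemma~\ref{uniesti} by substituting the time conversion $\mathcal{C}s_{t_1}(t)^{-1}=|t|(1+O(|t|^{M}))$ from Lemma~\ref{interval} into the rescaled-time estimates of Lemma~\ref{rebootstrap}, with the remainder $O(s^{-2(\alpha-1)})$ absorbed into $O(|t|^{M})$ via $M<2(\alpha-1)$ and the uniformity in $t_1$ coming from the $t_1$-independence of the implied constants. The paper itself only sketches this (deferring details to \cite{MP}), and your bookkeeping of the exponents and of the absence of a Jacobian for $\tilde{\varepsilon}$ is accurate.
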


\section{Proof of Theorem \ref{Thm:exist-3}}
\label{sec:proof}
In this section, we prove Theorem \ref{Thm:exist-3}. See \cite{LMR,MP,MIP} for more details.

\begin{proof}[Proof of Theorem \ref{Thm:exist-3}]
Let $(t_n)_{n\in\mathbb{N}}\subset(t_0,0)$ be a increasing sequence such that $\lim_{n\nearrow \infty}t_n=0$. For each $n\in\mathbb{N}$, let $u_n$ be the solution for \eqref{NLS2} with the initial value
\begin{align*}
u_n(t_n,x):=\frac{1}{{\lambda_{1,n}}^\frac{N}{2}}P\left(\frac{x}{\lambda_{1,n}}\right)e^{-i\frac{b_{1,n}}{4}\frac{|x|^2}{{\lambda_{1,n}}^2}}
\end{align*}
at $t_n$, where
\[
s_n:=-\frac{\|yQ\|_2^2}{8E_0}{t_n}^{-1},\quad \lambda_n:=\sqrt{\frac{\|yQ\|_2^2}{8E_0}}{s_n}^{-1},\quad E(P_{\lambda_n,b_n,0})=E_0.
\]

According to Lemma \ref{decomposition}, there exists the decomposition
\[
u_n(t,x)=\frac{1}{\tilde{\lambda}_n(t)^{\frac{N}{2}}}\left(P+\tilde{\varepsilon}_n\right)\left(t,\frac{x}{\tilde{\lambda}_n(t)}\right)e^{-i\frac{\tilde{b}_n(t)}{4}\frac{|x|^2}{\tilde{\lambda}_n(t)^2}+i\tilde{\gamma}_n(t)}
\]
on $[t_0,t_n]$. Up to a subsequence, there exists $u_\infty(t_0)\in \Sigma^1$ such that
\[
u_n(t_0)\rightharpoonup u_\infty(t_0)\quad \text{weakly in}\ \Sigma^1,\quad u_n(t_0)\rightarrow u_\infty(t_0)\quad \text{in}\ L^2(\mathbb{R}^N)\quad (n\rightarrow\infty).
\]

Moreover, since $u_n:[t_0,0)\to\Sigma^1$ is locally uniformly bounded,
\[
u_n\rightarrow u_\infty\quad \text{in}\ C([t_0,T'],L^2(\mathbb{R}^N)),\quad u_n(t)\rightharpoonup u_\infty(t)\ \text{in}\ \Sigma^1 \quad (n\rightarrow\infty)
\]
holds (see \cite{MP}). Particularly, we have $\|u_\infty(t)\|_2=\|Q\|_2$.

According to weak convergence in $H^1(\mathbb{R}^N)$ and Lemma \ref{decomposition}, we decompose $u_\infty$ to
\[
u_\infty(t,x)=\frac{1}{\tilde{\lambda}_\infty(t)^{\frac{N}{2}}}\left(Q+\tilde{\varepsilon}_\infty\right)\left(t,\frac{x}{\tilde{\lambda}_\infty(t)}\right)e^{-i\frac{\tilde{b}_\infty(t)}{4}\frac{|x|^2}{\tilde{\lambda}_\infty(t)^2}+i\tilde{\gamma}_\infty(t)}
\]
on $[t_0,0)$. Furthermore, as $n\rightarrow\infty$, 
\begin{align*}
\tilde{\lambda}_n(t)\rightarrow\tilde{\lambda}_\infty(t),\quad \tilde{b}_n(t)\rightarrow \tilde{b}_\infty(t),\quad e^{i\tilde{\gamma}_n(t)}\rightarrow e^{i\tilde{\gamma}_\infty(t)},\quad\tilde{\varepsilon}_n(t)\rightharpoonup \tilde{\varepsilon}_\infty(t)\quad \text{weakly in}\ \Sigma^1
\end{align*}
hold for any $t\in[t_0,0)$. Therefore, we obtain
\begin{align*}
&\tilde{\lambda}_{\infty}(t)=\sqrt{\frac{8E_0}{\|yQ\|_2^2}}\left|t\right|(1+\epsilon_{\tilde{\lambda},0}(t)),\quad \tilde{b}_{\infty}(t)=\frac{8E_0}{\|yQ\|_2^2}\left|t\right|(1+\epsilon_{\tilde{b},0}(t)),\\
&\|\tilde{\varepsilon}_{\infty}(t)\|_{H^1}\lesssim \left|t\right|^{K+\frac{\alpha}{2}},\quad \|y\tilde{\varepsilon}_{\infty}(t)\|_2\lesssim \left|t\right|^{K+\frac{\alpha}{2}-1},\quad \left|\epsilon_{\tilde{\lambda},0}(t)\right|\lesssim |t|^M,\quad \left|\epsilon_{\tilde{b},0}(t)\right|\lesssim |t|^M
\end{align*}
from the uniform estimates in Lemma \ref{uniesti}. Consequently, we obtain Theorem \ref{Thm:exist-3}.

Finally, check the energy. We obtain
\begin{align*}
E\left(u_n\right)-E\left(P_{\tilde{\lambda}_n,\tilde{b}_n,\tilde{\gamma}_n}\right)=&\int_0^1\left\langle E'(P_{\tilde{\lambda}_n,\tilde{b}_n,\tilde{\gamma}_n}+\tau \tilde{\varepsilon}_{\tilde{\lambda}_n,\tilde{b}_n,\tilde{\gamma}_n}),\tilde{\varepsilon}_{\tilde{\lambda}_n,\tilde{b}_n,\tilde{\gamma}_n}\right\rangle d\tau\\
=&O\left(\frac{1}{{\tilde{\lambda}_n}^2}\left(\|\tilde{\varepsilon}_n\|_{H^1}+\tilde{b}_n\|y\tilde{\varepsilon}\|_2\right)\right)=o(1).
\end{align*}
Similarly,
\[
E\left(u_\infty\right)-E\left(P_{\tilde{\lambda}_\infty,\tilde{b}_\infty,\tilde{\gamma}_\infty}\right)=o(1).
\]
From continuity of energy,
\[
\lim_{n\rightarrow \infty}E\left(P_{\tilde{\lambda}_n,\tilde{b}_n,\tilde{\gamma}_n}\right)=E\left(P_{\tilde{\lambda}_\infty,\tilde{b}_\infty,\tilde{\gamma}_\infty}\right).
\]
Therefore, we obtain
\[
E\left(u_\infty\right)=E_0+o_{t\nearrow0}(1).
\]
From energy conservation, $E\left(u_\infty\right)=E_0$.
\end{proof}

\section{Proofs of Theorems \ref{Thm:propmmbs-2} and \ref{Thm:propmmbs-1}}
In this section, let $K=0$ and $K'$ be sufficiently large.

Firstly, let $u$ be a solution for \eqref{NLS} and define $w$ by
\[
w(t,x):=\overline{u}(-t,x).
\]
Then $w$ is also a solution for \eqref{NLS}. Therefore, if $u$ blows up, we may assume that $u$ blows up at a positive time $T\in(0,\infty]$.

We define $\hat{\lambda}$ and $v$ by
\[
\hat{\lambda}(t):=\frac{\|\nabla Q\|_2}{\|\nabla u(t)\|_2},\quad v(t,x):=\hat{\lambda}(t)^\frac{N}{2}u(t,\hat{\lambda}(t)x).
\]
Then
\[
\|v\|_2=\|Q\|_2,\quad \|\nabla v\|_2=\|\nabla Q\|_2,\quad \limsup_{t\rightarrow T}E_{\text{crit}}(v(t))=0
\]
hold. Therefore, there exist $\hat{x}:(0,T)\rightarrow\mathbb{R}^N$ and $\hat{\gamma}:(0,T)\rightarrow\mathbb{R}$ such that
\begin{align}
\label{predec}
\hat{\lambda}(t)^\frac{N}{2}u(t,\hat{\lambda}(t)(x-\hat{x}(t)))e^{i\hat{\gamma}(t)}\rightarrow Q\quad \text{in }H^1\quad(t\rightarrow T)
\end{align}
(e.g., see \cite{MRUPB}). Assuming that $N\geq 2$ and $u$ is radial, we obtain $\hat{x}=0$.

From Lemma \ref{decomposition}, for any $\epsilon_0>0$ there exists $t_0$ that is sufficiently close to $T$ such that we obtain the decomposition of $u$ on $(t_0,T)$: 
\[
u(t,x)=\frac{1}{\tilde{\lambda}(t)^{\frac{N}{2}}}\left(P+\tilde{\varepsilon}\right)\left(t,\frac{x}{\tilde{\lambda}(t)}\right)e^{-i\frac{\tilde{b}(t)}{4}\frac{|x|^2}{\tilde{\lambda}(t)^2}+i\tilde{\gamma}(t)}.
\]
In particular, from arbitrariness of $\epsilon_0$ and uniqueness of the decomposition in Lemma \ref{decomposition},
\begin{align}
\label{paramconv}
\lim_{t\rightarrow T}\frac{\tilde{\lambda}(t)}{\hat{\lambda}(t)}=1,\quad \lim_{t\rightarrow T}\tilde{b}(t)=0,\quad \lim_{t\rightarrow T}e^{i(\hat{\gamma}(t)+\tilde{\gamma}(t))}=1
\end{align}
hold.

Let $\hat{\varepsilon}$ be defined by
\[
\hat{\varepsilon}(t,y):=\tilde{\varepsilon}\left(t,y\right)e^{-i\frac{\tilde{b}(t)|y|^2}{4}}.
\]
Then, from \eqref{predec} and \eqref{paramconv},
\[
\lim_{t\rightarrow T}\|\hat{\varepsilon}\|_{H^1}=0
\]
holds. Moreover, from mass conversation,
\[
-(Q,\tilde{\varepsilon})_2=\frac{1}{2}\|\hat{\varepsilon}\|_2^2+O\left(\tilde{\lambda}^\alpha\left(|\tilde{b}|^2+\lambda^\alpha\right)\right)+O(\tilde{\lambda}^\alpha\|\hat{\varepsilon}\|_2).
\]
Therefore, from energy conversation, we obtain
\begin{align*}
\tilde{\lambda}^2E(u_0)&=\frac{\tilde{b}^2}{8}\|yQ\|_2^2+\frac{1}{2}\|\nabla\hat{\varepsilon}\|_2^2+\frac{1}{2}\|\hat{\varepsilon}\|_2^2-\frac{1}{2}\int_{\mathbb{R}^N}d^2F(Q)(\hat{\varepsilon},\hat{\varepsilon})dy-\frac{C_1\tilde{\lambda}^\alpha}{p+1}\|Q\|_{p+1}^{p+1}-\frac{C_2\tilde{\lambda}^\alpha}{2}\||\cdot|^{-\sigma}Q\|_2^2\\
&\hspace{20pt}+O(\tilde{\lambda}^{2\alpha}+|b|\tilde{\lambda}^\alpha+\tilde{\lambda}^\alpha\|\hat{\varepsilon}\|_{H^1})+o(\tilde{\lambda}^{2\alpha}+\|\hat{\varepsilon}\|_{H^1}^2).
\end{align*}
Accordingly, from \eqref{Lcoer},
\[
\tilde{\lambda}^2E(u_0)+\frac{C_1\tilde{\lambda}^\alpha}{p+1}\|Q\|_{p+1}^{p+1}+\frac{C_2\tilde{\lambda}^\alpha}{2}\||\cdot|^{-\sigma}Q\|_2^2\gtrsim \tilde{b}^2+\|\hat{\varepsilon}\|_{H^1}^2
\]
holds. Consequently, we obtain the following lemma:

\begin{lemma}
\label{bepsiesti}
If $(C_1,C_2)=(\pm \omega,\mp 1)$, then
\[
\tilde{\lambda}^2E(u_0)\gtrsim \tilde{b}^2+\|\hat{\varepsilon}\|_{H^1}^2.
\]
If $(C_1,C_2)=(C_0,-1)$ with $C_0>\omega$ or $(C_1,C_2)=(-C_0,1)$ with $0<C_0<\omega$, then
\[
\tilde{\lambda}^\alpha\gtrsim \tilde{b}^2+\|\hat{\varepsilon}\|_{H^1}^2.
\]
\end{lemma}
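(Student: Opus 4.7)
The plan is to read the conclusion off the pre-derived inequality
\[
\tilde{\lambda}^2 E(u_0) + \frac{C_1 \tilde{\lambda}^\alpha}{p+1}\|Q\|_{p+1}^{p+1} + \frac{C_2 \tilde{\lambda}^\alpha}{2}\||\cdot|^{-\sigma} Q\|_2^2 \gtrsim \tilde{b}^2 + \|\hat{\varepsilon}\|_{H^1}^2
\]
obtained just before the statement from the coercivity \eqref{Lcoer} together with the expansion of the conserved energy and the mass identity for $(Q,\tilde{\varepsilon})_2$. The only remaining work is to identify the sign, and order of magnitude, of the $\tilde{\lambda}^\alpha$-coefficient on the left-hand side under the parameter conditions given.

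First I would rewrite this coefficient using the very definition of $\omega$:
\[
\frac{C_1}{p+1}\|Q\|_{p+1}^{p+1} + \frac{C_2}{2}\||\cdot|^{-\sigma} Q\|_2^2 = \frac{\|Q\|_{p+1}^{p+1}}{p+1}\bigl(C_1 + \omega C_2\bigr).
\]
In the balanced case $(C_1,C_2)=(\pm\omega,\mp 1)$ one has $C_1+\omega C_2=\pm\omega\mp\omega=0$, so the two subcritical contributions cancel \emph{exactly} and the inequality collapses to $\tilde{\lambda}^2 E(u_0)\gtrsim \tilde{b}^2+\|\hat{\varepsilon}\|_{H^1}^2$, which is the first assertion of the lemma.

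For the other regimes, the sign convention gives $C_1+\omega C_2 = C_0-\omega>0$ when $(C_1,C_2)=(C_0,-1)$ with $C_0>\omega$, and $C_1+\omega C_2 = \omega-C_0>0$ when $(C_1,C_2)=(-C_0,1)$ with $0<C_0<\omega$. So the coefficient of $\tilde{\lambda}^\alpha$ is a strictly positive constant $c_0$. Since $\alpha=2-\tfrac{N(p-1)}{2}<2$ for $p>1$ and $\tilde{\lambda}(t)\to 0$ as $t\to T$ by \eqref{paramconv}, I would shrink $t_0$ further (if necessary) so that $\tilde{\lambda}^{2-\alpha}|E(u_0)|\leq c_0/2$ on $(t_0,T)$. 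Then $\tilde{\lambda}^2 E(u_0) + c_0\tilde{\lambda}^\alpha\geq \tfrac{c_0}{2}\tilde{\lambda}^\alpha$, which gives the second bound.

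The only delicate point is the sign of $\tilde{\lambda}^2 E(u_0)$ in the unbalanced cases when $E(u_0)<0$; this is absorbed by the strictly positive $\tilde{\lambda}^\alpha$ term precisely because $\alpha<2$ and $\tilde{\lambda}\to 0$ at the blow-up time, so no genuine obstacle is expected.
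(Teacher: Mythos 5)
Your proposal takes exactly the route the paper intends: the lemma is read off the displayed inequality preceding it, and the definition of $\omega$ turns the $\tilde{\lambda}^\alpha$-coefficient into $\frac{\|Q\|_{p+1}^{p+1}}{p+1}(C_1+\omega C_2)$, which vanishes when $(C_1,C_2)=(\pm\omega,\mp 1)$ and equals a positive constant $c_0$ in the unbalanced cases. One directional slip in your last step: to conclude $\tilde{\lambda}^\alpha\gtrsim\tilde{b}^2+\|\hat{\varepsilon}\|_{H^1}^2$ you need the \emph{upper} bound $\tilde{\lambda}^2E(u_0)+c_0\tilde{\lambda}^\alpha\leq\frac{3c_0}{2}\tilde{\lambda}^\alpha$ (which follows from the same smallness $\tilde{\lambda}^{2-\alpha}|E(u_0)|\leq c_0/2$, valid since $\alpha<2$ and $\tilde{\lambda}\to 0$), not the lower bound $\geq\frac{c_0}{2}\tilde{\lambda}^\alpha$ that you wrote; with that direction corrected the argument is complete.
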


\begin{corollary}
If $(C_1,C_2)=(\pm \omega,\mp 1)$ and $u$ is a radial blow-up solution for \eqref{NLS} with critical mass, then
\[
E(u)>0.
\]
\end{corollary}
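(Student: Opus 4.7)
I would argue by contradiction, assuming $E(u)\leq 0$. Applying Lemma \ref{bepsiesti} on an interval $(T-\delta,T)$ near the blow-up time yields $\tilde{\lambda}(t)^2 E(u) \gtrsim \tilde{b}(t)^2 + \|\hat{\varepsilon}(t)\|_{H^1}^2$, whose left-hand side is non-positive while the right is non-negative. If $E(u)<0$ this is already a contradiction, so it remains to treat $E(u)=0$, in which case the lemma forces $\tilde{b}\equiv 0$ and $\hat{\varepsilon}\equiv 0$ on $(T-\delta,T)$; since $\tilde{b}=0$ implies $\hat{\varepsilon}=\tilde{\varepsilon}$, also $\tilde{\varepsilon}\equiv 0$.

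On this interval we then have the pointwise identity
\[
u(t,x)=\tilde{\lambda}(t)^{-N/2}P(x/\tilde{\lambda}(t);0,\tilde{\lambda}(t))e^{i\tilde{\gamma}(t)},
\]
and rescaling together with critical-mass conservation gives $\|P(\cdot;0,\tilde{\lambda}(t))\|_2^2=\|Q\|_2^2$ on $(T-\delta,T)$. Since $\|\nabla u(t)\|_2\to\infty$ forces $\tilde{\lambda}(t)\to 0$ continuously, this identity propagates to $\|P(\cdot;0,\mu)\|_2^2=\|Q\|_2^2$ for every $\mu$ in some interval $(0,\tilde{\lambda}_*)$.

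I would then Taylor-expand $P(\cdot;0,\mu)=Q+\mu^\alpha P_{0,0}^+ + \mu^{2\alpha}P_{0,1}^+ + O(\mu^{3\alpha})$ and invoke the orthogonality $(P_{0,k}^+,Q)_2=0$---explicit in the Proposition for $k=0$ and imposed analogously at every higher order in the construction of \cite{LMR,MIP} to determine $\beta_{0,k}$ via the Fredholm-type identity obtained by pairing with $\Lambda Q$---to obtain
\[
\|P(\cdot;0,\mu)\|_2^2-\|Q\|_2^2=\mu^{2\alpha}\|P_{0,0}^+\|_2^2+O(\mu^{3\alpha}).
\]
From $L_+P_{0,0}^+=\pm(\omega g(Q)-|y|^{-2\sigma}Q)$, the right-hand side is not identically zero (the two terms have incompatible decay at infinity and singularity at the origin), so $P_{0,0}^+\not\equiv 0$; hence the display above is strictly positive for small $\mu>0$, contradicting the mass identity.

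The principal obstacle is verifying $(P_{0,k}^+,Q)_2=0$ for $k\geq 1$ from the construction, since only the $k=0$ case is recorded in the excerpt. Should that orthogonality be unavailable, a fallback is to also exploit the identity $E(u)=0$ at leading order in $\mu$: combined with the mass identity it eliminates $(Q,P_{0,1}^+)_2$ and reduces the desired contradiction to $\langle L_+P_{0,0}^+,P_{0,0}^+\rangle\neq 0$, which one would attempt to establish using the coercivity \eqref{Lcoer} of $L_+$ on radial $H^1$-functions orthogonal to $Q$.
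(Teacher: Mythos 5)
Your proposal is correct in its main line but takes a genuinely different route for the crucial step. The reduction is identical to the paper's: Lemma \ref{bepsiesti} gives $E(u)\geq 0$ at once, and $E(u)=0$ forces $\tilde b\equiv 0$ and $\tilde\varepsilon\equiv 0$, so that $u(t)=\tilde\lambda(t)^{-N/2}\bigl(Q+\sum_k\tilde\lambda(t)^{(k+1)\alpha}P_{0,k}^+\bigr)(x/\tilde\lambda(t))e^{i\tilde\gamma(t)}$. From here the paper substitutes this exact ansatz into \eqref{NLS2} and takes the \emph{imaginary part}: since the profile is real, the only imaginary contribution is $\frac{\partial\tilde\lambda}{\partial t}\bigl(\Lambda Q+O(\tilde\lambda^\alpha)\bigr)$, forcing $\frac{\partial\tilde\lambda}{\partial t}=0$ and hence $\tilde\lambda$ constant, contradicting $\tilde\lambda(t)\to 0$. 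You instead use \emph{mass conservation}: $\|P(\cdot\,;0,\mu)\|_2^2=\|Q\|_2^2$ along the range of $\tilde\lambda$, which accumulates at $0$; since the left side is a polynomial in $\mu^\alpha$, its $\mu^{2\alpha}$-coefficient $\|P_{0,0}^+\|_2^2+2(Q,P_{0,1}^+)_2$ must vanish, contradicting $P_{0,0}^+\neq 0$ (your nonvanishing argument via $L_+P_{0,0}^+=\pm(\omega g(Q)-|y|^{-2\sigma}Q)\not\equiv 0$ is fine). The price of your route is exactly the item you flag: you need $(Q,P_{0,1}^+)_2=0$. This is in fact how the constructions in \cite{LMR,MIP} fix the constants $\beta_{j,k}$ at every order --- one solves $L_+P_{j,k}^+=\beta_{j,k}\tfrac{|y|^2}{4}Q+F_{j,k}$ on radial functions (where $L_+$ is invertible) and tunes $\beta_{j,k}$ using $(L_+^{-1}(\tfrac{|y|^2}{4}Q),Q)_2=\tfrac18\|yQ\|_2^2\neq 0$ to impose $(P_{j,k}^+,Q)_2=0$ --- so the gap is fillable, but it is not recorded in this paper beyond $(0,0)$. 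The paper's route buys robustness: it needs only $\Lambda Q\neq 0$ and $\tilde\lambda\to 0$, and in particular requires neither $P_{0,0}^+\neq 0$ nor any higher-order orthogonality. One caveat on your fallback: \eqref{Lcoer} requires orthogonality to all three of $Q$, $|x|^2Q$ and $\rho$, so it does not by itself yield $\left\langle L_+P_{0,0}^+,P_{0,0}^+\right\rangle>0$ from $(P_{0,0}^+,Q)_2=0$ alone; you would need the sharper spectral fact that $L_+\geq 0$ on $\{Q\}^\perp\cap H^1_{\mathrm{rad}}$ with equality in the quadratic form only along $\Span\{\Lambda Q\}$, together with $P_{0,0}^+\notin\Span\{\Lambda Q\}$ (which does follow from $L_+\Lambda Q=-2Q$ and $\omega g(Q)-|y|^{-2\sigma}Q\notin\Span\{Q\}$) --- indeed the paper itself treats $\left\langle L_+P_{0,0}^+,P_{0,0}^+\right\rangle\neq 0$ as an assumption rather than a proved fact in its remarks on the case $\alpha<1$.
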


\begin{proof}
From Lemma \ref{bepsiesti}, $E(u)\geq 0$ is obvious.

We assume $E(u)=0$. Then $\tilde{b}=0$ and $\tilde{\varepsilon}=0$. Namely, from \eqref{Pdef},
\[
u(t,x)=\frac{1}{\tilde{\lambda}(t)^{\frac{N}{2}}}\left(Q\left(\frac{y}{\tilde{\lambda}(t)}\right)+\sum_{k=0}^{K'}\tilde{\lambda}^{(k+1)\alpha}P_{0,k}^+\left(\frac{x}{\tilde{\lambda(t)}}\right)\right)\left(t,\frac{x}{\tilde{\lambda}(t)}\right)e^{i\tilde{\gamma}(t)}.
\]
Since $u$ is a solution for \eqref{NLS2} with $C_0=\omega$, we obtain
\begin{align*}
0&=-i\tilde{\lambda}\frac{\partial\tilde{\lambda}}{\partial t}\Lambda\left(Q+\tilde{\lambda}^\alpha Z\right)+i\sum_{k=0}^{K'}(k+1)\alpha\tilde{\lambda}^{(k+1)\alpha+1}\frac{\partial\tilde{\lambda}}{\partial t}P_{0,k}^+-\tilde{\lambda}^2\frac{\partial\tilde{\gamma}}{\partial t}\left(Q+\tilde{\lambda}^\alpha Z\right)\\
&\hspace{20pt}+\Delta \left(Q+\tilde{\lambda}^\alpha Z\right)+|Q+\tilde{\lambda}^\alpha Z|^{\frac{4}{N}}\left(Q+\tilde{\lambda}^\alpha Z\right)\pm \omega\tilde{\lambda}^\alpha|Q+\tilde{\lambda}^\alpha Z|^{p-1}\left(Q+\tilde{\lambda}^\alpha Z\right)\mp\frac{\tilde{\lambda}^\alpha}{|y|^{2\sigma}}\left(Q+\tilde{\lambda}^\alpha Z\right).
\end{align*}
Therefore, taking imaginary part, we obtain
\[
0=\frac{\partial\tilde{\lambda}}{\partial t}\left(\Lambda Q+\tilde{\lambda}^\alpha \left(\Lambda Z+\sum_{k=1}^{K'}(k+1)\alpha\tilde{\lambda}^{(k+1)\alpha}P_{0,k}^+\right)\right).
\]
Since $\Lambda Q\neq 0$ and $\tilde{\lambda}\rightarrow 0$ as $t\rightarrow T$, $0=\frac{\partial\tilde{\lambda}}{\partial t}$. It means that $\tilde{\lambda}$ is a constant. However, it contradicts $\tilde{\lambda}>0$ and $\tilde{\lambda}\rightarrow 0$ as $t\rightarrow T$. Consequently, $E(u)\neq 0$.
\end{proof}

\begin{proof}[Proofs of Theorems \ref{Thm:propmmbs-2} and \ref{Thm:propmmbs-1}]
From Lemma \ref{Modesti-1},
\[
\left|\tilde{\lambda}\frac{\partial\tilde{\lambda}}{\partial t}+\tilde{b}\right|\lesssim \|\hat{\varepsilon}\|_{H^1}^2+\tilde{\lambda}^{2\alpha}
\]
holds on $(t_0,T)$.

We assume $(C_1,C_2)=(C_0,-1)$ with $C_0>\omega$ or $(C_1,C_2)=(-C_0,1)$ with $0<C_0<\omega$. Then, since $|\tilde{b}|\lesssim \tilde{\lambda}^\frac{\alpha}{2}$ from Lemma \ref{bepsiesti}, we obtain
\[
\left|\tilde{\lambda}\frac{\partial\tilde{\lambda}}{\partial t}\right|\lesssim \tilde{\lambda}^{\frac{\alpha}{2}}.
\]
Therefore,
\[
1\gtrsim\left|\frac{\partial}{\partial t}\tilde{\lambda}^{2-\frac{\alpha}{2}}\right|
\]
holds. Integrating on $(t,T)$, we obtain
\[
\tilde{\lambda}(t)^{2-\frac{\alpha}{2}}\lesssim T-t.
\]
Consequently,
\[
\|\nabla u(t)\|_2\sim\frac{1}{\tilde{\lambda}(t)}\gtrsim \frac{1}{(T-t)^\frac{2}{4-\alpha}}.
\]

The same can be proved when assuming $(C_1,C_2)=(\pm \omega,\mp 1)$.
\end{proof}


\begin{thebibliography}{99}
\bibitem{BLGS} H. Berestycki and P.-L. Lions. Nonlinear scalar field equations. I. Existence of a ground state. \textit{Arch. Rational Mech. Anal.} 82 (1983), no. 4, 313–345. \url{https://doi.org/10.1007/BF00250555}
\bibitem{CSSE} T. Cazenave. \textit{Semilinear Schr\"{o}dinger equations.} Courant Lecture Notes in Mathematics, 10. New York University, Courant Institute of Mathematical Sciences, New York; American Mathematical Society, Providence, RI, 2003. \url{http://dx.doi.org/10.1090/cln/010}
\bibitem{KGS} M. K. Kwong. Uniqueness of positive solutions of $\Delta u-u+u^p=0$ in $\mathbb{R}^n$. \textit{Arch. Rational Mech. Anal.} 105 (1989), no. 3, 243–266. \url{https://doi.org/10.1016/S0022-0396(02)00142-0}
\bibitem{LMR} S. Le Coz, Y. Martel and P. Rapha\"{e}l. Minimal mass blow up solutions for a double power nonlinear Schr\"{o}dinger equation. \textit{Rev. Mat. Iberoam.} 32 (2016), no. 3, 795–833. \url{https://doi.org/10.4171/RMI/899}
\bibitem{MIP} N. Matsui. Minimal-mass blow-up solutions for nonlinear Schr\"{o}dinger equations with an inverse potential. \textit{Nonlinear Anal.} 213 (2021), Paper No. 112497. \url{https://doi.org/10.1016/j.na.2021.112497}
\bibitem{MP} N. Matsui. Minimal mass blow-up solutions for nonlinear Schr\"{o}dinger equations with a potential, in press, arXiv preprint \url{https://arxiv.org/abs/2007.15968}
\bibitem{MMMB} F. Merle. Determination of blow-up solutions with minimal mass for nonlinear Schr\"{o}dinger equations with critical power. \textit{Duke Math. J.} 69 (1993), no. 2, 427–454. \url{https://doi.org/10.1215/S0012-7094-93-06919-0}
\bibitem{MRO} F. Merle and P. Raphael. On universality of blow-up profile for $L^2$ critical nonlinear Schr\"{o}dinger equation. \textit{Invent. Math.} 156 (2004), no. 3, 565–672. \url{http://dx.doi.org/10.1007/s00222-003-0346-z}
\bibitem{MRUPB} F. Merle and P. Raphael. The blow-up dynamic and upper bound on the blow-up rate for critical nonlinear Schr\"{o}dinger equation. \textit{Ann. of Math.} (2) 161 (2005), no. 1, 157–222. \url{http://dx.doi.org/10.4007/annals.2005.161.157}
\bibitem{MRUDB} F. Merle and P. Raphael. On a sharp lower bound on the blow-up rate for the $L^2$ critical nonlinear Schr\"{o}dinger equation. \textit{J. Amer. Math. Soc.} 19 (2006), no. 1, 37–90. \url{http://dx.doi.org/10.1090/S0894-0347-05-00499-6}
\bibitem{RSEU} P. Rapha\"{e}l and J. Szeftel. Existence and uniqueness of minimal blow-up solutions to an inhomogeneous mass critical NLS. \textit{J. Amer. Math. Soc.} 24 (2011), no. 2, 471–546. \url{https://doi.org/10.1090/S0894-0347-2010-00688-1}
\bibitem{WL} M. Weinstein. Lyapunov stability of ground states of nonlinear dispersive evolution equations. \textit{Comm. Pure Appl. Math.} 39 (1986), no. 1, 51–67. \url{https://doi.org/10.1002/cpa.3160390103}
\bibitem{WGS} M. Weinstein. Nonlinear Schr\"{o}dinger equations and sharp interpolation estimates. \textit{Comm. Math. Phys.} 87 (1982/83), no. 4, 567–576. \url{https://doi.org/10.1007/BF01208265}
\end{thebibliography}
\end{document}